\chardef\bslash=`\\ % p. 424, TeXbook
\newtheorem{thm}{Theorem}[section]
\newtheorem{cor}[thm]{Corollary}
\newtheorem{lem}[thm]{Lemma}
\newtheorem{prop}[thm]{Proposition}
\theoremstyle{definition}
\newtheorem{rem}[thm]{Remark}
\theoremstyle{remark}
\newcommand{\eval}[2][\right]{\relax
  \ifx#1\right\relax \left.\fi#2#1\rvert}
\begin{document}
%\title{Deep non-central hypercube sections}
%\title{Deep critical hypercube sections}
%\title{High-dimensional sections of the hypercube}
\title{Deep sections of the hypercube}

\author[L. Pournin]{Lionel Pournin}
\address{Universit{\'e} Paris 13, Villetaneuse, France}
\email{lionel.pournin@univ-paris13.fr}

%\date{Received on MONTH, YEAR}
%\issueinfo{VOL}{NUM}{MONTH}{YEAR}
%\doiinfo{10.1007/DOI-NUMBER}
\begin{abstract}
Consider a non-negative number $t$ and a hyperplane $H$ of $\mathbb{R}^d$ whose distance to the center of the hypercube $[0,1]^d$ is $t$. If $t$ is equal to $0$ and $H$ is orthogonal to a diagonal of $[0,1]^d$, it is known that the $(d-1)$\nobreakdash-dimensional volume of $H\cap[0,1]^d$ is a strictly increasing function of $d$ when $d$ is at least~$3$. The study of the monotonicity of this volume is extended for $t$ up to above $1/2$ and, when $d$ is large enough, for every non-negative $t$. In particular, a range for $t$ is identified such that this volume is a strictly decreasing function of $d$ over the positive integers. The local extremality of the $(d-1)$-dimensional volume of $H\cap[0,1]^d$ when $H$ is orthogonal to a diagonal of either $[0,1]^d$ or a lower dimensional face is also determined for the same values of $t$. It is shown for instance that when $t$ is above an explicit constant and $d$ is large enough, this volume is always strictly locally maximal when $H$ is orthogonal to a diagonal of $[0,1]^d$. A precise estimate for the convergence rate of the Eulerian numbers to their limit Gaussian behavior is provided along the way.
\end{abstract}
\maketitle
%\tableofcontents

% Keywords: Volume sections, hypercube, Eulerian numbers, asymptotic estimates

\section{Introduction}\label{DSH.sec.0}

Consider the $d$-dimensional unit hypercube $[0,1]^d$ and a hyperplane $H$ of $\mathbb{R}^d$ through the center of this hypercube. Denote by $V$ the $(d-1)$-dimensional volume of the intersection $H\cap[0,1]^d$. It is a result of Douglas Hensley \cite{Hensley1979} that the minimal value for $V$ is $1$ and that this only happens when $H$ is parallel to facet of $[0,1]^d$. In the same article, Hensley asked for the maximal value of $V$. This question was answered by Keith Ball~\cite{Ball1986} who showed that $V$ is at most $\sqrt{2}$ with equality precisely when $H$ is orthogonal to a diagonal of a square face of $[0,1]^d$. This problem can be generalized by considering a fixed non-negative number $t$ less than the circumradius of $[0,1]^d$ and a hyperplane $H$ of $\mathbb{R}^d$ whose distance to the center of $[0,1]^d$ is equal to $t$. In other words, $H$ is tangent to the sphere of radius $t$ that shares its center with the hypercube. Denoting again by $V$ the $(d-1)$-dimensional volume of $H\cap[0,1]^d$, Vitali Milman conjectured~\cite{KonigKoldobsky2011} that, if $t$ is at most $1/2$, then $V$ is minimal only when $H$ is orthogonal to a diagonal of a face of $[0,1]^d$. Milman also conjectured that $V$ is maximal, for most values of $t$, only when $H$ is orthogonal to a diagonal of a face of $[0,1]^d$. The latter conjecture is known to hold when
$$
\frac{\sqrt{d-2}}{2}<t<\frac{\sqrt{d}}{2}\mbox{,}
$$
and $d$ is at least $5$ \cite{MoodyStoneZachZvavitch2013,Pournin2023}. In that case, the maximal value of $V$ is achieved precisely when $H$ is orthogonal to a diagonal of $[0,1]^d$.

Local properties of $V$, thought as a function of $H$, have also been investigated \cite{AmbrusGargyan2024b,AmbrusGargyan2024,Konig2021,Pournin2024}. In particular it is known that, if $t$ is equal to $0$ and $d$ is at least $4$, then $V$ is always strictly locally maximal when $H$ is orthogonal to a diagonal of $[0,1]^d$ \cite{Pournin2024}. Moreover, also in the case when $t$ is equal to $0$, it is known that $V$ is never locally extremal when $H$ is orthogonal to a diagonal of a face of $[0,1]^d$ of dimension at least $3$ and less than $d$ \cite{AmbrusGargyan2024b} and that it has critical points when $H$ is not orthogonal to a diagonal of any face of the hypercube \cite{AmbrusGargyan2024}. If $t$ is within a range of $\sqrt{d}/\!\log d$ from the circumradius of the hypercube, then $V$ is always strictly locally maximal when $H$ is orthogonal to a diagonal of the hypercube \cite{Konig2021,Pournin2024} and if $t$ is within a range of $\sqrt{n}/\!\log n$ from the circumradius of $[0,1]^n$, then $V$ never locally extremal when $H$ is orthogonal to a diagonal of a $n$ dimensional face of $[0,1]^d$ \cite{Pournin2024,Pournin2025}. Volume sections of convex bodies other than the hypercube have also been considered \cite{Konig2021,Konig2023b,Konig2023a,LiuTkocz2020,MeyerPajor1988}. In particular, Alexandros Eskenazis, Piotr Nayar, and Tomasz Tkocz have shown that the result of~\cite{Ball1986} extends to the $\ell_p$ balls when $p$ is large enough \cite{EskenazisNayarTkocz2024}.

Denote by $I_d(t)$ the $(d-1)$-dimensional volume of $H\cap[0,1]^d$ in the case when $H$ is a hyperplane of $\mathbb{R}^d$ orthogonal to a diagonal of $[0,1]^d$ and whose distance to the center of $[0,1]^d$ is equal to $t$. It has been shown by Ferenc {\'A}goston Bartha, Ferenc Fodor, and Bernardo Gonz{\'a}lez Merino \cite{BarthaFodorGonzalezMerino2021} that $I_d(0)$ is a strictly increasing function of $d$ when $d$ is at least $3$. It is noteworthy that
$$
\lim_{d\rightarrow+\infty}I_d(0)=\sqrt{\frac{6}{\pi}}\mbox{.}
$$

In particular, this limit is barely less than the maximal $\sqrt{2}$. Here, the values of $t$ for which $I_d(t)$ is eventually strictly increasing or strictly decreasing when $d$ is large enough are identified via the following theorem.

\begin{thm}\label{DSH.sec.0.thm.1}
The limit of $d^2\bigl(I_{d+1}(t)-I_d(t)\bigr)$ as $d$ goes to infinity is
$$
\frac{3\sqrt{6}\bigl(1-24t^2+48t^4\bigr)}{20\sqrt{\pi}e^{6t^2}}\mbox{.}
$$
\end{thm}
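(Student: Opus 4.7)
The plan is to identify $I_d(t)$ with a rescaled Irwin--Hall density, derive a two-term asymptotic expansion of that density, and then read off the limit.

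First, realize $I_d(t)$ probabilistically. The unit normal to the diagonal of $[0,1]^d$ is $(1,\ldots,1)/\sqrt{d}$ and the hyperplane at signed height $t$ from the centre corresponds to the value $d/2+t\sqrt{d}$ of the coordinate sum, so a standard change-of-variables argument gives
$$I_d(t)=\sqrt{d}\,f_{S_d}\!\bigl(d/2+t\sqrt{d}\bigr)\mbox{,}$$
where $S_d=U_1+\cdots+U_d$ is the sum of $d$ independent uniform variables on $[0,1]$, whose density is the classical Irwin--Hall density. The problem reduces to a precise local central limit theorem for $S_d$ at the point $d/2+t\sqrt{d}$.

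The heart of the argument is to establish the Edgeworth-type expansion
$$I_d(t)=\sqrt{\frac{6}{\pi}}\,e^{-6t^2}\!\left[1-\frac{3(48t^4-24t^2+1)}{20\,d}+O\bigl(d^{-2}\bigr)\right]\mbox{,}$$
with a remainder smooth enough that its discrete difference in $d$ is of order $d^{-3}$, which follows by carrying the Edgeworth expansion one step further. The natural tool is Fourier inversion: the centred characteristic function is $\varphi(\xi)=\sin(\xi/2)/(\xi/2)$, so
$$f_{S_d-d/2}(x)=\frac{1}{2\pi}\int_{-\infty}^{\infty}\varphi(\xi)^d\,e^{-ix\xi}\,d\xi\mbox{.}$$
Setting $x=t\sqrt{d}$ and rescaling $\xi=\eta/\sqrt{d}$, the Taylor expansion
$$d\log\varphi\!\left(\frac{\eta}{\sqrt{d}}\right)=-\frac{\eta^2}{24}-\frac{\eta^4}{2880\,d}+O\!\left(\frac{\eta^6}{d^2}\right)$$
forces the integrand to concentrate at the origin. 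Because $|\varphi(\xi)|<1$ off the origin and $\varphi(\xi)=O(|\xi|^{-1})$ at infinity, the contribution from large $\eta$ is exponentially small in $d$. Expanding $\exp$ in the central region and evaluating the Gaussian moment $\int\eta^4\,e^{-\eta^2/24-it\eta}\,d\eta$ via the fourth derivative of $e^{-6t^2}$ yields the coefficient $3(48t^4-24t^2+1)/20$; odd-order corrections vanish because the uniform distribution is symmetric about its mean. This quantitative local central limit theorem is precisely the \emph{precise estimate for the convergence rate of the Eulerian numbers to their limit Gaussian behavior} announced in the abstract, and I expect it to be the main technical obstacle.

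Granted the expansion $I_d(t)=A+B/d+O(d^{-2})$ with $A=\sqrt{6/\pi}\,e^{-6t^2}$ and $B=-3A(48t^4-24t^2+1)/20$, smoothness of the remainder in $d$ implies
$$I_{d+1}(t)-I_d(t)=-\frac{B}{d^2}+O\bigl(d^{-3}\bigr)\mbox{,}$$
so $d^2\bigl(I_{d+1}(t)-I_d(t)\bigr)\rightarrow -B=\frac{3\sqrt{6}\bigl(1-24t^2+48t^4\bigr)}{20\sqrt{\pi}\,e^{6t^2}}$, which is the right-hand side of the theorem.
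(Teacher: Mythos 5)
Your proposal is correct and follows essentially the same strategy as the paper. Both arguments start from the Fourier-integral representation of $I_d(t)$ (the paper's equation
$$I_d(t)=\frac{2\sqrt{d}}{\pi}\int_0^{+\infty}\Bigl(\frac{\sin u}{u}\Bigr)^{\!d}\cos\bigl(2\sqrt{d}tu\bigr)\,du\mbox{,}$$
which is exactly your inversion formula after the substitution $\xi=2u$), establish a two-term Edgeworth expansion $I_d(t)=A+B/d+C/d^2+O(d^{-3})$ with a uniform error bound --- this is precisely the paper's Theorem \ref{DSH.sec.1.thm.0} --- and conclude by the telescoping difference $I_{d+1}-I_d=-B/d^2+O(d^{-3})$. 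The only technical variation is how the integrand is Gaussian-ized: the paper uses the change of variables $\sin u/u=e^{-x^2/6}$ and Taylor-expands the inverse map $\psi$ together with the cosine factor, whereas you expand $d\log\varphi(\eta/\sqrt d)$ in the exponent; both routes yield the same moment computations (your $\int\eta^4 e^{-\eta^2/24-it\eta}\,d\eta$ is $2\pi\cdot16\,T_4(t)$ in the paper's notation, giving $-\tfrac{432}{2880}(1-24t^2+48t^4)T_0(t)=B$). One small caution in your write-up: ``smoothness of the remainder in $d$'' is not by itself a valid justification --- a bound $O(d^{-2})$ on the remainder only gives $O(d^{-2})$ control on its first difference. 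What actually carries the argument is exactly the clause you add afterwards: carrying the expansion one further order to get a remainder that is uniformly $O(d^{-3})$, which makes the difference of the remainders $O(d^{-3})$ and the difference of the $C/d^2$ terms $O(d^{-3})$ as well. That is precisely how the paper closes the argument.
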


The non-negative roots of $1-24t^2+48t^4$ are $\gamma^-$ and $\gamma^+$ where
\begin{equation}\label{DSH.sec.0.eq.0}
\gamma^\pm=\frac{1}{2}\sqrt{1\pm\sqrt{\frac{2}{3}}}\mbox{.}
\end{equation}

Therefore, $1-24t^2+48t^4$ is positive when $0\leq{t}<\gamma^-$ or $\gamma^+<t$ and negative when $\gamma^-<t<\gamma^+$. Hence, Theorem \ref{DSH.sec.0.thm.1} implies the following.

\begin{cor}\label{DSH.sec.0.cor.1}
Consider a non-negative number $t$ other than $\gamma^-$ and $\gamma^+$. For every large enough integer $d$, the difference $I_{d+1}(t)-I_d(t)$ is positive when $0\leq{t}<\gamma^-$ or $\gamma^+<t$ and negative when $\gamma^-<t<\gamma^+$.
%If $0\leq{t}<\gamma^-$ or $\gamma^+<t$ then $I_d(t)$ becomes a strictly increasing function of $d$ when $d$ is large enough and if $\gamma^-<t<\gamma^+$, then it becomes a strictly decreasing function of $d$ when $d$ is large enough where
%$$
%\gamma^\pm=\frac{1}{2}\sqrt{1\pm\sqrt{\frac{2}{3}}}\mbox{.}
%$$
\end{cor}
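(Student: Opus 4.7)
The plan is to derive the corollary directly from Theorem~\ref{DSH.sec.0.thm.1} by a routine sign analysis. First I would observe that, since $e^{6t^2}>0$ and the constant $3\sqrt{6}/(20\sqrt{\pi})$ is positive, the sign of the limit of $d^2\bigl(I_{d+1}(t)-I_d(t)\bigr)$ coincides with the sign of the biquadratic polynomial $P(t)=1-24t^2+48t^4$.

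Next I would locate the non-negative roots of $P$. Setting $y=t^2$ turns $P(t)$ into $48y^2-24y+1$, whose roots are
\[
y_\pm=\frac{24\pm\sqrt{576-192}}{96}=\frac{3\pm\sqrt{6}}{12}=\frac{1}{4}\Bigl(1\pm\sqrt{2/3}\,\Bigr)\mbox{,}
\]
which are precisely $(\gamma^\pm)^2$ in the notation of (\ref{DSH.sec.0.eq.0}). Since the leading coefficient of $48y^2-24y+1$ is positive, this quadratic is positive for $y<y_-$ and $y>y_+$, and negative for $y_-<y<y_+$. Translating back to $t\geq0$, the polynomial $P(t)$ is positive on $[0,\gamma^-)\cup(\gamma^+,+\infty)$ and negative on $(\gamma^-,\gamma^+)$.

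The corollary then follows from the standard fact that a real sequence converging to a strictly positive (respectively, strictly negative) limit is eventually strictly positive (respectively, strictly negative). Indeed, since $d^2>0$, the sign of $I_{d+1}(t)-I_d(t)$ is the same as the sign of $d^2\bigl(I_{d+1}(t)-I_d(t)\bigr)$, which by Theorem~\ref{DSH.sec.0.thm.1} tends to a multiple of $P(t)$. When $t$ is different from $\gamma^-$ and $\gamma^+$, this limit is non-zero and its sign is the one just determined, so $I_{d+1}(t)-I_d(t)$ inherits that sign for every large enough integer $d$.

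There is essentially no obstacle to overcome here once Theorem~\ref{DSH.sec.0.thm.1} is established; the work is entirely concentrated in that theorem, and the corollary is merely a sign inspection of its limiting expression.
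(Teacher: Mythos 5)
Your proof is correct and matches the paper's argument: both deduce the corollary by determining the sign of $1-24t^2+48t^4$ on $[0,\gamma^-)\cup(\gamma^+,\infty)$ and on $(\gamma^-,\gamma^+)$, then invoke the fact that a sequence with a non-zero limit eventually has the sign of that limit.
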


Theorem \ref{DSH.sec.0.thm.1} is established by extending the techniques used in \cite{BarthaFodorGonzalezMerino2021}. Further exploiting the bounds provided in \cite{BarthaFodorGonzalezMerino2021} on the first seven derivatives of a certain function allows to bound the lowest value of $d$ above which the monotonicity results stated by Corollary \ref{DSH.sec.0.cor.1} hold. This allows to analyse the sign of the difference $I_{d+1}(t)-I_d(t)$ down to $d$ equal to $1$ when $t$ is small enough or, equivalently when $H\cap[0,1]^d$ is a deep enough section of the hypercube. This can be done using symbolic computation by taking advantage of a piecewise polynomial expression for $I_d(t)$. All the values of $t$ within $[0,1/2]$ such that $I_d(t)$ is a strictly monotonic function of $d$ over the positive integers are identified. It is particularly noteworthy that there is a range for $t$ such that $I_d(t)$ is strictly decreasing. The bounds of the identified intervals are denoted $\alpha_{i,j}^-$ and $\alpha_{i,j}^\circ$ as they correspond values of $t$ such that $I_i(t)$ and $I_j(t)$ coincide.

\begin{thm}\label{DSH.sec.3.thm.3}
The two following statements hold.
\begin{enumerate}
\item[(i)] If $\alpha_{2,3}^-<t<\alpha_{3,4}^-$, then $I_d(t)$ is a strictly increasing function of $d$ over the positive integers where $\alpha_{3,4}^-$ is the solution of
$$
16-9\sqrt{3}-12\Bigl(8-3\sqrt{3}\Bigr)t^2+96t^3=0
$$
such that $0.144137<\alpha_{3,4}^-<0.144138$ and $\alpha_{2,3}^-$ is equal to
$$
\frac{2-\sqrt{31-12\sqrt{6}}}{6\sqrt{3}}
$$
and satisfies $0.0705012<\alpha_{2,3}^-<0.0705013$
\item[(ii)] If $\alpha_{3,4}^\circ<t\leq1/2$, then $I_d(t)$ is a strictly decreasing function of $d$ over the positive integers where $\alpha_{3,4}^\circ$ is the solution of
$$
32-27\sqrt{3}+108t-12\Bigl(16+3\sqrt{3}\Bigr)t^2+192t^3=0
$$
such that $0.407452<\alpha_{3,4}^\circ<0.407453$.
\end{enumerate}
\end{thm}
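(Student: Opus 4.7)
The plan is to reduce each statement to combining an effective asymptotic estimate for large $d$ with a finite symbolic verification at small $d$. For statement (i), the interval $(\alpha_{2,3}^-,\alpha_{3,4}^-)$ lies strictly inside $[0,\gamma^-)$, so Corollary~\ref{DSH.sec.0.cor.1} already guarantees that $I_{d+1}(t)-I_d(t)$ is positive for all sufficiently large $d$; for statement (ii), the interval $(\alpha_{3,4}^\circ,1/2]$ lies strictly inside $(\gamma^-,\gamma^+)$, where the same corollary yields a negative sign. The task therefore splits into (a) making the word ``sufficiently'' quantitative by producing a numerical threshold $d_0$ valid uniformly on the closed intervals $[\alpha_{2,3}^-,\alpha_{3,4}^-]$ and $[\alpha_{3,4}^\circ,1/2]$, and (b) checking the signs of $I_{d+1}(t)-I_d(t)$ directly for every positive $d$ less than $d_0$.

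For step (a), I would revisit the argument supporting Theorem~\ref{DSH.sec.0.thm.1}, which expands the difference $I_{d+1}(t)-I_d(t)$ to leading order $d^{-2}$ by Taylor-expanding the integrand inside the Fourier representation of $I_d(t)$. Retaining the next term in this expansion and bounding the resulting remainder by means of the explicit bounds on the first seven derivatives supplied in~\cite{BarthaFodorGonzalezMerino2021} produces a remainder of the form $C(t)/d^3$ with a constant that stays uniformly bounded on any compact subset of $[0,1/2]$ avoiding $\gamma^-$. Since $|1-24t^2+48t^4|$ is bounded below by a positive constant on both $[\alpha_{2,3}^-,\alpha_{3,4}^-]$ and $[\alpha_{3,4}^\circ,1/2]$, this yields an explicit $d_0$ beyond which the sign of $I_{d+1}(t)-I_d(t)$ agrees with that of the leading asymptotic.

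For step (b), I would invoke the classical piecewise polynomial description of $I_d(t)$, which stems from interpreting this quantity as a rescaled density of a sum of $d$ independent uniform random variables on $[0,1]$. On each interval of the form $[(d-2k-1)/(2\sqrt{d}),(d-2k+1)/(2\sqrt{d})]\cap[0,1/2]$, the function $I_d(t)$ coincides with a polynomial of degree $d-1$ whose coefficients lie in $\mathbb{Q}[\sqrt{d}]$. For each $d$ from $1$ up to $d_0-1$, the difference $I_{d+1}(t)-I_d(t)$ is therefore a piecewise polynomial on $[0,1/2]$ whose roots can be isolated symbolically. The lower endpoint $\alpha_{2,3}^-$ arises as the smaller root of $I_2(t)=I_3(t)$, a quadratic whose closed-form solution produces the displayed expression; analogously, $\alpha_{3,4}^-$ and $\alpha_{3,4}^\circ$ are the relevant roots of $I_3(t)=I_4(t)$ in the two distinct polynomial pieces of this difference on $[0,1/2]$, which yields the two cubic equations in the statement. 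The remaining sign checks on $(\alpha_{2,3}^-,\alpha_{3,4}^-)$ and $(\alpha_{3,4}^\circ,1/2]$ for each small $d$ then reduce to rational inequalities verifiable by standard root isolation.

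The principal obstacle will be keeping $d_0$ small enough in step (a) for the symbolic verification in step (b) to remain tractable. The derivative bounds of~\cite{BarthaFodorGonzalezMerino2021} are calibrated to the order of approximation used there, so controlling a higher-order remainder requires extending them by one or two more derivatives with constants tight enough that the resulting $d_0$ remains modest. A secondary technicality arises at the closed endpoint $t=1/2$ of statement (ii), where the piecewise polynomial structure of $I_d$ changes pieces and the hyperplane passes through a vertex-containing face of $[0,1]^d$; this endpoint must be handled by a separate direct evaluation of $I_d(1/2)$ for each $d$ below $d_0$.
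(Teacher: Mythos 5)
Your proposal is correct and mirrors the paper's own route: the paper establishes an explicit threshold $\Delta(t)$ (Theorem~\ref{DSH.sec.3.thm.1}, derived from Theorem~\ref{DSH.sec.1.thm.0}) above which the sign of $I_{d+1}(t)-I_d(t)$ is forced, then uses the piecewise polynomial form~(\ref{DSH.sec.3.thm.2.eq.1}) of $I_d(t)$ together with symbolic root isolation to settle the sign for all smaller $d$ (up to $450$), yielding Theorem~\ref{DSH.sec.3.thm.2} for $d\geq 5$ and a final analysis of $1\leq d\leq 4$ that produces the endpoints $\alpha_{2,3}^-$, $\alpha_{3,4}^-$, $\alpha_{3,4}^\circ$ as the relevant roots of $I_2=I_3$ and $I_3=I_4$. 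This is precisely the two-part strategy (quantitative asymptotic threshold plus finite symbolic verification) that you outline.
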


The two intervals identified by Theorem \ref{DSH.sec.3.thm.3} are exact in the sense that, if $t$ is less than $\alpha_{2,3}^-$, between $\alpha_{3,4}^-$ and $\alpha_{3,4}^\circ$, or greater than but close enough to $0.5$, then $I_d(t)$ is no longer strictly monotonic over the positive integers, but only above a certain value of $d$. In fact, it will also be shown that, if $t$ is at most $0.64607$ and not too close to $\gamma^-$, then $I_d(t)$ is a strictly monotonic function of $d$ when $d$ is at least $5$. The supremum and the infimum of $I_d(t)$ when $d$ ranges over the positive integers are further given for the same values of $t$. As a consequence, several intervals are determined such that, if $t$ belongs to these intervals, then the $(d-1)$\nobreakdash-dimensional volume of $H\cap[0,1]^d$ is not maximal when $H$ is orthogonal to a diagonal of any face of the hypercube.

\begin{thm}\label{DSH.sec.3.cor.2}
Consider a hyperplane $H$ of $\mathbb{R}^d$ whose distance to the center of $[0,1]^d$ is equal to $t$. If $0<t\leq\beta^-$, $\beta^+<t\leq0.20916$, or $\delta<t<0.5$ where $\beta^-$ and $\beta^+$ denote the two solutions of the equation
\begin{equation}\label{DSH.sec.3.cor.2.eq.1}
\frac{\sqrt{2-4t^2}-2t}{1-4t^2}=\sqrt{\frac{6}{\pi}}e^{-6t^2}
\end{equation}
satisfying $0.0181611<\beta^-<0.0181612$ and $0.165625<\beta^+<0.165626$ while $\delta$ denotes the root of the polynomial expression
$$
575\sqrt{5}-528\sqrt{6} -120\bigl(25\sqrt{5}-24\sqrt{6}\bigr)t^2+240\bigl(25\sqrt{5}-36\sqrt{6}\bigr)t^4+17280t^5
$$
that satisfies $0.222924<\delta<0.222925$, then the $(d-1)$-dimensional volume of $H\cap[0,1]^d$ cannot be maximal when $H$ is orthogonal to a diagonal of the hypercube $[0,1]^d$ or to a diagonal of any of its proper faces. 
\end{thm}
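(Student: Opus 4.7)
The proof proceeds by exhibiting, for each of the three intervals for $t$ listed in the statement, a specific non-diagonal hyperplane $H$ whose $(d-1)$-volume strictly exceeds $I_n(t)$ for every $n\in\{1,\dots,d\}$. The reduction is as follows: when $H$ is orthogonal to a diagonal of an $n$-face, the section $H\cap[0,1]^d$ factors as an $(n-1)$-section of that face times $[0,1]^{d-n}$, so its $(d-1)$-volume is exactly $I_n(t)$. Hence the theorem is equivalent to showing that $\sup_n I_n(t)$ is strictly less than the volume of some explicit non-diagonal section.

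For the first two intervals, take $H^\circ$ to be the hyperplane orthogonal to $(\cos\phi^{\ast},\sin\phi^{\ast},0,\dots,0)$ at distance $t$ from the center of $[0,1]^d$, where $\phi^{\ast}=\arccos(t\sqrt{2})-\pi/4$. The section of $[0,1]^d$ by $H^\circ$ is the product of a chord of $[0,1]^2$ with $[0,1]^{d-2}$, so its $(d-1)$-volume equals the chord length. A case analysis on which pair of sides of $[0,1]^2$ the chord exits through shows that, as $\phi$ varies in $[0,\pi/2]$, the length is maximized precisely at $\phi=\phi^{\ast}$, the boundary case where the chord passes through a vertex of $[0,1]^2$, and the corresponding maximum length equals
\[
J(t):=\frac{\sqrt{2-4t^2}-2t}{1-4t^2},
\]
the left-hand side of \eqref{DSH.sec.3.cor.2.eq.1}. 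For $t>0$ one has $\phi^{\ast}\neq\pi/4$, so $(\cos\phi^{\ast},\sin\phi^{\ast},0,\dots,0)$ is not a face-diagonal direction and $H^\circ$ is genuinely non-diagonal. Equation \eqref{DSH.sec.3.cor.2.eq.1} defines $\beta^-$ and $\beta^+$ as the two solutions of $J(t)=\lim_{d\to\infty}I_d(t)=\sqrt{6/\pi}\,e^{-6t^2}$, whence $J(t)>\lim_d I_d(t)$ on $(0,\beta^-)\cup(\beta^+,1/2]$. Combined with the quantitative control on $I_n(t)$ in terms of its limit supplied by the proof of Theorem \ref{DSH.sec.3.thm.3}, this reduces the inequality $J(t)>\sup_n I_n(t)$ to finitely many explicit comparisons $J(t)>I_n(t)$ for small $n$, using the closed forms $I_1(t)=1$, $I_2(t)=\sqrt{2}-2t$, $I_3(t)=3\sqrt{3}(1-4t^2)/4$, and the analogous piecewise-polynomial expressions for $I_4(t)$ and $I_5(t)$. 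The constant $0.20916$ is the numerical upper threshold of the second interval: the largest $t$ for which all of the required small-$n$ inequalities still hold.

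For the third interval $\delta<t<1/2$, a direct algebraic manipulation using the piecewise-polynomial expression of $I_5(t)$ valid when $5/2+t\sqrt{5}\in[2,3]$ and of $I_6(t)$ valid when $3+t\sqrt{6}\in[3,4]$ identifies the polynomial in the statement as exactly $960\bigl(I_5(t)-I_6(t)\bigr)$; accordingly $\delta$ is the unique value at which $I_5$ and $I_6$ coincide in these regimes. Since $\delta>\gamma^-$, Corollary \ref{DSH.sec.0.cor.1} and Theorem \ref{DSH.sec.3.thm.3}(ii) together force $I_d(t)$ to be eventually strictly decreasing in $d$ throughout $(\delta,1/2)$, so $\sup_n I_n(t)$ is attained at some small value of $n$ that can be enumerated explicitly. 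The finitely many inequalities $J(t)>I_n(t)$ needed for these small $n$ are again polynomial, and in particular $J(t)>1=I_1(t)$ holds strictly throughout $[0,1/2)$.

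The principal obstacle is this final verification step: for each of the three intervals one must identify the complete finite list of small $n$ requiring a direct comparison $J(t)>I_n(t)$, verify each such polynomial inequality on the corresponding interval, and pinpoint the exact threshold in $t$ at which the governing inequality first becomes tight, yielding the boundary values $\beta^\pm$, $0.20916$, and $\delta$ in the statement. This is essentially a symbolic computation task on piecewise polynomial expressions, of the same flavor as the analysis already carried out to establish Theorem \ref{DSH.sec.3.thm.3}.
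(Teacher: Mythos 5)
Your proposal follows essentially the same strategy as the paper: take the chord‑section value $J(t)=\bigl(\sqrt{2-4t^2}-2t\bigr)/(1-4t^2)$ as a witness non‑diagonal lower bound, and show it exceeds $\sup_{n}I_n(t)$ on the specified ranges by comparing with $\lim_d I_d(t)=\sqrt{6/\pi}e^{-6t^2}$ and with finitely many $I_n(t)$. The differences are minor. You reconstruct the lower bound $J(t)$ from scratch via an explicit two‑dimensional chord $H^\circ$ passing through a vertex of $[0,1]^2$, whereas the paper simply cites K{\"o}nig and Koldobsky for the fact that $J(t)$ equals the maximal section volume when $d=3$ (which cylinderizes to all $d\geq 3$); your construction is self‑contained and achieves the same value, so this is a harmless and arguably clearer substitution. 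For the reduction to finitely many comparisons, the paper invokes Theorem~\ref{DSH.sec.3.thm.4}, which states the supremum of $I_n(t)$ over $n$ directly, so the comparisons to be checked are literally $J(t)>I_1(t),I_2(t),I_3(t)$ and $J(t)>\sqrt{6/\pi}e^{-6t^2}$, the last one being exactly your sign analysis of $\log J(t)-\log\sqrt{6/\pi}+6t^2$; you instead reach the same finite list more indirectly through Corollary~\ref{DSH.sec.0.cor.1} and Theorem~\ref{DSH.sec.3.thm.3}/\ref{DSH.sec.3.thm.2}, which works but requires you to separately argue that only $n\leq 5$ (when $I_n$ decreases) or $n\leq 4$ together with the limit (when $I_n$ increases) can matter. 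Your identification of the defining polynomial of $\delta$ as a multiple of $I_5-I_6$ on the relevant pieces is correct and agrees with the role $\delta$ plays in Theorem~\ref{DSH.sec.3.thm.2}. Both you and the paper defer the underlying polynomial and sign verifications to symbolic computation, so there is no substantive gap; just be aware that the paper's proof routes through Theorem~\ref{DSH.sec.3.thm.4} rather than Theorem~\ref{DSH.sec.3.thm.3}, which makes the final comparison list shorter and the treatment of the range $(\alpha^-_{2,\infty},\alpha^-_{3,\infty})$, where the supremum is the limit itself, cleaner.
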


It has been shown by Georg P{\'o}lya that $I_d(t)$ converges to a Gaussian function as $d$ goes to infinity \cite{Polya1913}. On the way to proving the above theorems, the exact convergence rate is provided for this asymptotic estimate.

\begin{thm}\label{DSH.sec.0.thm.2}
If $d$ is large enough, then for every non-negative number $t$,
$$
\Biggl|I_d(t)-\sqrt{\frac{6}{\pi}}e^{-6t^2}\biggl(1-\frac{3-72t^2+144t^4}{20d}\biggr)\Biggr|<\frac{1}{d^{3/2}}\mbox{.}
$$
\end{thm}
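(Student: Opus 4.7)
The plan is to work from the Fourier representation
$$I_d(t)=\frac{2}{\pi}\int_{0}^{\infty}\cos(2tv)\,\phi_d(v)\,dv,\qquad\phi_d(v):=\biggl(\frac{\sin(v/\sqrt d)}{v/\sqrt d}\biggr)^{\!d},$$
which comes from writing $I_d(t)=\sqrt d\,f_d(d/2+t\sqrt d)$, where $f_d$ is the Irwin--Hall density of the sum of $d$ independent uniform $[0,1]$ random variables, and applying Fourier inversion to its characteristic function. The natural comparison function is
$$\psi_d(v):=e^{-v^{2}/6}\biggl(1-\frac{v^{4}}{180\,d}\biggr),$$
suggested by the power series $\log(\sin x/x)=-x^{2}/6-x^{4}/180-x^{6}/2835-\cdots$. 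A direct Gaussian computation, using $\int_{0}^{\infty}\cos(2tv)e^{-v^{2}/6}\,dv=\tfrac12\sqrt{6\pi}\,e^{-6t^{2}}$ together with its fourth derivative in $a=2t$ to evaluate the weighted integral of $v^{4}$, yields
$$\frac{2}{\pi}\int_{0}^{\infty}\cos(2tv)\,\psi_d(v)\,dv=\sqrt{\frac{6}{\pi}}\,e^{-6t^{2}}\biggl(1-\frac{3-72t^{2}+144t^{4}}{20d}\biggr).$$
Since $|\cos(2tv)|\leq1$, the theorem reduces to the $t$-independent $L^{1}$ estimate $\int_{0}^{\infty}|\phi_d(v)-\psi_d(v)|\,dv<\pi/(2d^{3/2})$, valid for all $d$ large enough.

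To prove this $L^{1}$ bound I would split the integration range at a threshold such as $v_{0}:=d^{1/10}$. On the inner interval $[0,v_{0}]$, where $v/\sqrt d$ is uniformly small, the non-positivity of all Taylor coefficients of $\log(\sin x/x)$, which follows from the Euler product $\sin x/x=\prod_{n\geq1}(1-x^{2}/(n^{2}\pi^{2}))$, leads to an explicit remainder
$$d\log\frac{\sin(v/\sqrt d)}{v/\sqrt d}=-\frac{v^{2}}{6}-\frac{v^{4}}{180\,d}+R(v,d),\qquad|R(v,d)|\leq\frac{Cv^{6}}{d^{2}},$$
uniformly in $v\in[0,v_{0}]$. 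Exponentiating and subtracting $\psi_d$ gives a pointwise bound of the form $|\phi_d(v)-\psi_d(v)|\leq C'e^{-v^{2}/6}(v^{6}+v^{8})/d^{2}$ on this inner interval, whose integral against the Gaussian is $O(1/d^{2})$, comfortably inside the budget.

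For the tail $[v_{0},\infty)$, the Gaussian decay immediately gives a negligible contribution for $|\psi_d|$. To bound $|\phi_d|$, the same non-positivity of the Taylor coefficients yields the uniform inequality $\sin y/y\leq e^{-y^{2}/6}$ on $[0,\pi)$, so that $|\phi_d(v)|\leq e^{-v^{2}/6}$ on $[v_{0},\pi\sqrt d)$, and the crude bound $|\sin(v/\sqrt d)/(v/\sqrt d)|\leq\sqrt d/v$ gives $|\phi_d(v)|\leq(\sqrt d/v)^{d}$ on $[\pi\sqrt d,\infty)$. Both tail contributions decay faster than any negative power of $d$.

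The main technical obstacle is establishing the remainder estimate $|R(v,d)|\leq Cv^{6}/d^{2}$ with an explicit constant $C$ that is uniform in $v\in[0,v_{0}]$ as $v_{0}\to\infty$ with $d$. The cleanest route is the closed form $-\log(\sin x/x)=\sum_{k\geq1}\zeta(2k)x^{2k}/(k\pi^{2k})$ provided by the Euler product: the terms with $k\geq3$ form a geometric-type tail bounded by $Cx^{6}$ on, say, $[0,\pi/2]$, with an explicit absolute constant $C$, and after the scaling $x\mapsto v/\sqrt d$ and multiplication by $d$ this gives exactly the required estimate. Equivalently, one may invoke the explicit derivative bounds on $\log(\sin x/x)$ already developed in~\cite{BarthaFodorGonzalezMerino2021}. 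Combining the three regional estimates then yields the required $L^{1}$ bound, and hence the theorem.
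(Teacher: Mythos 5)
Your proof is correct, and it takes a genuinely different route from the paper's. The paper derives the statement from the sharper second-order estimate of \thmref{DSH.sec.1.thm.0}: after the substitution $e^{-x^2/6}=\sin u/u$, it splits $\cos\bigl(2\sqrt{d}\,t\,\psi(x)\bigr)$ by a trigonometric identity and Taylor-expands the resulting factors $c_d(x,t)$ and $s_d(x,t)$ against the numerical bounds on the derivatives of the inverse $\psi$ taken from Bartha--Fodor--Gonz{\'a}lez Merino (Proposition~\ref{DSH.sec.3.prop.1}); the Lagrange remainders there carry powers of $t$, which produces the $t$-dependent error $r(t)/(3d^3)$ and forces a separate treatment of the regime $t>\sqrt{d}/2$ where $I_d(t)=0$. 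You instead remain in the rescaled variable $v=\sqrt{d}\,u$, keep $\cos(2tv)$ untouched as an $L^\infty$-bounded kernel, and approximate only the even density factor $\bigl(\sin(v/\sqrt d)/(v/\sqrt d)\bigr)^d$ by $e^{-v^2/6}\bigl(1-v^4/(180d)\bigr)$ in $L^1$. The error bound is then $t$-free from the outset, eliminating both the trigonometric split and the case analysis at $t=\sqrt{d}/2$, and the derivative bounds on $\psi$ are replaced by the self-contained series $-\log(\sin x/x)=\sum_{k\ge 1}\zeta(2k)x^{2k}/(k\pi^{2k})$ coming from the Euler product. Two remarks: your $L^1$ estimate actually gives $O(1/d^2)$, strictly better than the stated $1/d^{3/2}$ (the paper's exponent is an artifact of its $t$-dependent remainder, not of the true order); and what the paper's heavier change of variables buys in exchange, beyond this particular statement, is the explicit second-order coefficient $q(t)/(5600 d^2)$ of \thmref{DSH.sec.1.thm.0}, which is needed elsewhere in the paper (Theorems~\ref{DSH.sec.0.thm.1} and~\ref{DSH.sec.3.thm.1}). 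That refinement could be recovered in your framework by carrying the comparison function to one more order, but it is not needed for the statement at hand.
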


It will be shown that the inequality stated by Theorem \ref{DSH.sec.0.thm.2} holds for all $d$ at least $136$ and in particular the convergence is uniform. As noticed by Richard Stanley \cite{Stanley1977} and Douglas Hensley \cite{Hensley1982}, $I_d(t)$ is closely related to the Eulerian numbers. These numbers can be arranged into a triangle like Pascal's triangle, and just as the binomial coefficients, they are asymptotically normal, as shown by Leonard Carlitz, David Kurtz, Richard Scoville, and Olaf Stackelberg~\cite{CarlitzKurtzScovilleStackelberg1972} who further bound the convergence order for this estimate. This bound will be made sharp as a consequence of Theorem \ref{DSH.sec.0.thm.2} (see Theorem \ref{DSH.sec.2.thm.1}).

Recall that the $(d-1)$-dimensional volume of $H\cap[0,1]^d$ is thought of as a function of $H$. Exploiting the results of \cite{Pournin2024,Pournin2025}, the same techniques allow to determine the local extremality of that volume when $H$ is orthogonal to a diagonal of a $n$-dimensional face of the hypercube $[0,1]^d$ for arbitrary values of $t$, provided $n$ is sufficiently large. This allows to prove, in particular, that for every $t$ greater than $\gamma^+$ and every large enough $d$, the $(d-1)$-dimensional volume of $H\cap[0,1]^d$ is always strictly locally maximal when $H$ is orthogonal to a diagonal of the hypercube and never extremal when $H$ is orthogonal to a diagonal of a sufficiently high-dimensional proper face. This confirms a conjecture of Hermann K{\"o}nig whereby down to a constant value of $t$, this volume is only maximal when $H$ is orthogonal to a diagonal of $[0,1]^d$. The local extremality of the $(d-1)$-dimensional volume of $H\cap[0,1]^d$ when $H$ is orthogonal to a diagonal of $[0,1]^d$ and $d$ is large enough is determined in general as follows.

\begin{thm}\label{DSH.sec.0.thm.3}
Consider a non-negative number $t$ and a hyperplane $H$ of $\mathbb{R}^d$ whose distance to the center of $[0,1]^d$ is equal to $t$. If $d$ is large enough then the $(d-1)$-dimensional volume of $H\cap[0,1]^d$ is
\begin{enumerate}
\item[(i)] strictly locally maximal when $t$ belongs to
$$
\Biggl[0,\gamma^--\frac{K}{d}\Biggr]\cup\Biggl[\gamma^++\frac{K}{d},\sqrt{\frac{\log d}{6}}\Biggr]
$$
and $H$ is orthogonal to a diagonal of $[0,1]^d$ and
\item[(ii)] strictly locally minimal when $t$ belongs to 
$$
\Biggl[\gamma^-+\frac{K}{d},\gamma^+-\frac{K}{d}\Biggr]
$$
and $H$ is orthogonal to a diagonal of $[0,1]^d$,
\end{enumerate}
where $K$ is a constant that can be taken equal to $76$.
\end{thm}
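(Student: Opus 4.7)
The plan is to combine the local-extremality criterion for diagonal-orthogonal hyperplane sections established in \cite{Pournin2024,Pournin2025} with the quantitative asymptotics developed earlier in this paper. The prior work expresses the second-order behavior of the volume function at the diagonal direction through an explicit quantity $Q_d(t)$ built from values of $I_k(t)$ for $k$ close to $d$, with the sign of $Q_d(t)$ controlling whether the diagonal-orthogonal section is strictly locally maximal (negative sign) or strictly locally minimal (positive sign). The core of the argument is to show that this sign is governed asymptotically by the polynomial $1-24t^2+48t^4$, which vanishes exactly at the two points $\gamma^\pm$ defined by \eqref{DSH.sec.0.eq.0} and changes sign there.

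First I would invoke the reduction of \cite{Pournin2024,Pournin2025} to identify $Q_d(t)$ and then, using the same technique that proves Theorem \ref{DSH.sec.0.thm.1} (an extension of the approach of \cite{BarthaFodorGonzalezMerino2021}), establish an asymptotic expansion of the form
$$
Q_d(t)=C(t)\cdot(1-24t^2+48t^4)\cdot e^{-6t^2}\cdot d^{-\alpha}+E_d(t),
$$
where $C(t)$ is an explicit positive function, $\alpha>0$ a suitable exponent, and $E_d(t)$ a uniformly controlled error. The qualitative part of statements (i) and (ii) follows from the sign of $1-24t^2+48t^4$, which is positive on $[0,\gamma^-)\cup(\gamma^+,+\infty)$ and negative on $(\gamma^-,\gamma^+)$. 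To pin down the explicit constant $K=76$, I would use Theorem \ref{DSH.sec.0.thm.2} to bound $E_d(t)$ quantitatively and combine this with the first-order Taylor expansion of $1-24t^2+48t^4$ at $\gamma^\pm$, whose linear coefficient $\pm 48\gamma^\pm\sqrt{2/3}$ is explicit. The restriction $|t-\gamma^\pm|\geq K/d$ then suffices to make the leading term of $Q_d(t)$ dominate $E_d(t)$, and tracing numerical constants through the bound yields $K=76$. The upper endpoint $t\leq\sqrt{\log d/6}$ appears because this is the range in which the Gaussian factor $e^{-6t^2}$ remains at least of order $d^{-1/2}$, which is needed for the leading term of $Q_d(t)$ to stay non-negligible compared to the $d^{-3/2}$ remainder inherited from Theorem \ref{DSH.sec.0.thm.2}.

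The principal obstacle is the careful bookkeeping required to reach the specific value $K=76$. This requires simultaneous uniform control of three ingredients: the constants in the reduction from \cite{Pournin2024,Pournin2025}, the sharp linear lower bound on $|1-24t^2+48t^4|$ near each $\gamma^\pm$, and the explicit quantitative form of $E_d(t)$. A secondary delicate point is the behavior near the upper endpoint $t=\sqrt{\log d/6}$, where the Gaussian decay and the uniform remainder become comparable in order of magnitude; here the argument must either sharpen the estimate of $E_d(t)$ or patch smoothly with the local-maximality result near the circumradius proved in \cite{Konig2021,Pournin2024}, which covers a complementary range of $t$. Once these steps are in place, no new ideas beyond the methods already deployed in Theorems \ref{DSH.sec.0.thm.1} and \ref{DSH.sec.0.thm.2} are needed to close the argument.
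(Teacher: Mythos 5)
Your strategic outline matches the paper's: invoke the local-extremality criterion from~\cite{Pournin2024,Pournin2025}, show that the sign of the relevant second-order quantity is asymptotically governed by $1-24t^2+48t^4$, derive the explicit constant $K$ from the Taylor linearization of this polynomial at $\gamma^\pm$ (your coefficient $48\gamma^\pm\sqrt{2/3}=16\sqrt{6}\gamma^\pm$ is exactly what the paper uses), and handle the upper endpoint $t\leq\sqrt{(\log d)/6}$ via the behavior of the Gaussian weight. Your observation that the final step reduces to verifying a threshold inequality, checked at the interval endpoints because of convexity, is also correct.

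However, you misidentify the quantitative machinery that closes the argument, and this is a genuine gap. The criterion from~\cite{Pournin2024,Pournin2025} (restated as Theorem~\ref{DSH.sec.5.thm.1}) is not ``a quantity built from values of $I_k(t)$ for $k$ close to $d$''; it involves two integrals with entirely different kernels, $f(u)=2\sin^2(u)/u^2-\cos(u)\sin(u)/u-1$ and $g(u)=\cos u+(u^2/3-1)\sin(u)/u$. Consequently Theorem~\ref{DSH.sec.0.thm.2}, which controls $I_d(t)$ itself, does \emph{not} supply the error control you ascribe to it. What is actually required is the pair of facts that $f(u)\sim-\tfrac{2}{45}u^4$ and $g(u)\sim-\tfrac{1}{45}u^4$ near $u=0$ (Proposition~\ref{DSH.sec.5.prop.1}), so that the criterion integrals are asymptotically proportional to the \emph{generalized} integral $I_{n,4}(t)$ of~(\ref{DSH.sec.4.eq.0}) (Lemma~\ref{DSH.sec.5.lem.2}), and a separate quantitative estimate for $I_{n,4}(t)$ (Theorem~\ref{DSH.sec.4.thm.1} and Corollary~\ref{DSH.sec.4.cor.1}), showing $I_{n,4}(t)\approx 27\sqrt{6/\pi}\,(1-24t^2+48t^4)/(n^2e^{6t^2})$ with error $O((1+t)/n^3)$. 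This is precisely why the paper develops an entire new section (Section~\ref{DSH.sec.4}) for the family $I_{d,k}$. Neither the appearance of the polynomial $1-24t^2+48t^4$ nor the explicit constants leading to $K=76$ can be extracted from Theorem~\ref{DSH.sec.0.thm.2}; the factor $1-24t^2+48t^4$ emerges from $T_4(t)$ in~(\ref{DSH.sec.1.eq.8}), which only enters through the $k=4$ case.

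Two smaller points. Your order-of-magnitude analysis at the upper endpoint is off: when $t=\sqrt{(\log d)/6}$ the Gaussian factor $e^{-6t^2}$ equals $1/d$, not $d^{-1/2}$, and the relevant comparison is with the $O(d^{-3})$ error of Corollary~\ref{DSH.sec.4.cor.1} against the $\Theta(d^{-2})$ main term of $I_{d,4}$, not a $d^{-3/2}$ remainder. And there is no need to ``patch smoothly with the local-maximality result near the circumradius'': the threshold $e^{6t^2}(74+84t)/|1-24t^2+48t^4|$ evaluated at $t=\sqrt{(\log n)/6}$ equals $n$ times a quantity tending to zero (because $e^{6t^2}=n$ while $|1-24t^2+48t^4|\sim 48t^4\to\infty$), so the inequality is satisfied directly without any auxiliary result.
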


%\begin{thm}\label{DSH.sec.0.thm.3}
%Consider a non-negative number $t$ other than $\gamma^-$ and $\gamma^+$. % where $\gamma^-$ and $\gamma^+$ are given by (\ref{DSH.sec.0.eq.0}).
%Let $H$ be a hyperplane of $\mathbb{R}^d$ whose distance to the center of $[0,1]^d$ is equal to $t$. If $d$ is large enough then the $(d-1)$-dimensional volume of $H\cap[0,1]^d$ is
%\begin{enumerate}
%\item[(i)] strictly locally maximal when $H$ is orthogonal to a diagonal of the hypercube $[0,1]^d$ and $t$ satisfies $0\leq{t}<\gamma^-$ or $\gamma^+<t$ and
%\item[(ii)] strictly locally minimal when $H$ is orthogonal to a diagonal of the hypercube $[0,1]^d$ and $t$ satisfies $\gamma^-<t<\gamma^+$.
%\end{enumerate}
%\end{thm}

A similar result is provided when $H$ is orthogonal to a diagonal of a proper face of the hypercube $[0,1]^d$ of sufficiently large dimension.

\begin{thm}\label{DSH.sec.0.thm.3.5}
%Consider a non-negative number $t$ other than $\gamma^-$ and $\gamma^+$. % where $\gamma^-$ and $\gamma^+$ are given by (\ref{DSH.sec.0.eq.0}).
Consider a non-negative number $t$ and a hyperplane $H$ of $\mathbb{R}^d$ whose distance to the center of $[0,1]^d$ is equal to $t$. If $n$ is large enough but less than $d$, then the $(d-1)$-dimensional volume of $H\cap[0,1]^d$ is never locally extremal (even weakly so) when $t$ belongs to
$$
\Biggl[0,\gamma^--\frac{K}{n}\Biggr]\cup\Biggl[\gamma^-+\frac{K}{n},\gamma^+-\frac{K}{n}\Biggr]\cup\Biggl[\gamma^++\frac{K}{n},\sqrt{\frac{\log n}{6}}\Biggr]
$$
and $H$ is orthogonal to a diagonal of a $n$-dimensional face of $[0,1]^d$, where $K$ is a constant that can be taken equal to $76$.
\end{thm}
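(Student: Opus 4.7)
The plan is to combine the analysis of Theorem \ref{DSH.sec.0.thm.3} with the transverse-perturbation criterion for diagonals of proper faces developed in \cite{Pournin2024,Pournin2025}. Fix an $n$-dimensional face $F$ of $[0,1]^d$ and choose coordinates so that $[0,1]^d=F\times[0,1]^{d-n}$. When $H$ is orthogonal to a diagonal of $F$ at distance $t$ from the center of $[0,1]^d$, the intersection $H\cap[0,1]^d$ splits as the product $(H\cap F)\times[0,1]^{d-n}$, so that its $(d-1)$-dimensional volume is exactly $I_n(t)$, the same quantity controlled by the previous theorems.

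Next, I would decompose the infinitesimal perturbations of the unit normal to $H$ at this configuration into two types: (a) intra-face perturbations, rotating the normal within the linear span of $F$, and (b) trans-face perturbations, introducing a component in the orthogonal complement of that span. The type-(a) perturbations reduce to perturbations of a diagonal of the $n$-dimensional hypercube $F$, so the second-order analysis carried out in the proof of Theorem \ref{DSH.sec.0.thm.3} applies verbatim: at $t$ at distance at least $K/n$ from $\gamma^\pm$, the quadratic coefficient along type-(a) perturbations is strictly negative on $[0,\gamma^--K/n]\cup[\gamma^++K/n,\sqrt{\frac{\log n}{6}}]$ and strictly positive on $[\gamma^-+K/n,\gamma^+-K/n]$, with $K=76$.

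For the type-(b) perturbations, I would invoke the second-order formula from \cite{Pournin2025}. After simplification, the quadratic coefficient in the perturbation parameter is an explicit combination of values and low-order derivatives of $I_n$ at $t$; its asymptotic behavior can be extracted by the techniques of Theorems \ref{DSH.sec.0.thm.1} and \ref{DSH.sec.0.thm.2}. The key point is that this coefficient has sign opposite to the one coming from type-(a) on each of the three intervals, the common controlling factor being again $1-24t^2+48t^4$ multiplied by a positive prefactor behaving like $e^{-6t^2}/n^2$. Consequently, the Hessian of the volume along the admissible perturbations is indefinite throughout the three intervals, and $V$ is not even weakly locally extremal.

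The main obstacle is making the error estimates effective enough that the single constant $K=76$ suffices simultaneously for both perturbation types and all three intervals. This requires combining the uniform bound of Theorem \ref{DSH.sec.0.thm.2} (valid from $d\geq 136$) with derivative bounds on the auxiliary functions already used in Theorem \ref{DSH.sec.0.thm.1}, now applied to the mixed expression produced by the type-(b) criterion. The check reduces to verifying that, at distance at least $K/n$ from $\gamma^\pm$, the leading factor $1-24t^2+48t^4$ has absolute value of order at least $K/n$, which dominates the $O(1/n^2)$ remainders and hence forces the sign of each quadratic coefficient for all sufficiently large $n$.
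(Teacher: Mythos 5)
Your proposal follows the same strategy as the paper. The criterion from \cite{Pournin2024,Pournin2025} (packaged in the paper as Theorem~\ref{DSH.sec.5.thm.1}) is exactly the decomposition you describe: a quantity $r$, governing intra-face perturbations, and a quantity $s$, governing trans-face perturbations, and non-extremality follows whenever $r$ and $s$ have opposite signs. The paper's Lemma~\ref{DSH.sec.5.lem.4} shows that both of these quantities are asymptotically proportional (with opposite signs) to $(1-24t^2+48t^4)e^{-6t^2}/n^2$, precisely because the leading Taylor coefficients of the two auxiliary functions $f$ and $g$ defined in~(\ref{DSH.sec.5.eq.1}) are $-2/45$ and $-1/45$ while $s$ carries an extra overall minus sign; the verification that $K=76$ works comes from expanding $1-24t^2+48t^4$ to first order around $\gamma^\pm$, exactly as you sketch. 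Two minor points of presentation differ from what the paper actually does: first, the paper does not express the relevant coefficients as ``values and low-order derivatives of $I_n$'' but rather introduces the modified integral $I_{n,4}(t)$ of~(\ref{DSH.sec.4.eq.0}) and estimates it directly via Theorem~\ref{DSH.sec.4.thm.1} and Corollary~\ref{DSH.sec.4.cor.1} (these are equivalent, since $I_{n,k}$ for even $k$ is a $t$-derivative of $I_n$ up to constants, but the paper's route avoids differentiating error bounds). Second, you phrase the trans-face contribution as a ``quadratic coefficient'' feeding into an indefinite Hessian; the $s$-criterion is more naturally a one-sided first-order effect because the trans-face directions correspond to coordinates of the normal being pushed off the boundary of $[0,+\infty[^d$, but this does not change the logic: with $r$ and $s$ of opposite signs, no weak local extremum is possible, which is exactly case (iii) of Theorem~\ref{DSH.sec.5.thm.1}.
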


Again, the smallest values of $d$ and $n$ above which the results stated by Theorems \ref{DSH.sec.0.thm.3} and \ref{DSH.sec.0.thm.3.5} hold will be bounded by an explicit function of $t$. In the case when $t$ is small enough but not too close to $\gamma^-$ or to $\gamma^+$, this will allow to extend Theorem~\ref{DSH.sec.0.thm.3} down to when $d$ is equal to $4$ and Theorem \ref{DSH.sec.0.thm.3.5} down to when $n$ is equal to $4$ by using symbolic computation.% In particular, the following theorem is established that determines the precise range for $t$ around $0$ where the local maximality result from \cite{Pournin2024} holds for all dimensions at least $4$.

\begin{thm}\label{DSH.sec.0.thm.4}
Consider a non-negative number $t$. Let $H$ be a hyperplane of $\mathbb{R}^d$ whose distance to the center of $[0,1]^d$ is equal to $t$. If $d$ is at least $4$, then the $(d-1)$-dimensional volume of $H\cap[0,1]^d$ is
\begin{enumerate}
\item[(i)] strictly locally maximal when $t$ satisfies
\begin{equation}\label{DSH.sec.0.thm.4.eq.0}
t<\frac{7-\bigl(17-12\sqrt{2}\bigr)^{1/3}-\bigl(17+12\sqrt{2}\bigr)^{1/3}}{24}
\end{equation}
and $H$ is orthogonal to a diagonal of the hypercube $[0,1]^d$,
\item[(ii)] strictly locally minimal when $0.23593\leq{t}\leq0.59495$ and $H$ is orthogonal to a diagonal of the hypercube $[0,1]^d$, and
\item[(iii)] not locally extremal (even weakly so) when either $t$ satisfies (\ref{DSH.sec.0.thm.4.eq.0}) or belongs to $]1/4,1/2[\,\cup\,]1/2,0.59495]$ and $H$ is orthogonal to a diagonal of a proper face of dimension at least $4$ of the hypercube $[0,1]^d$.
\end{enumerate}
\end{thm}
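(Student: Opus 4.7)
The plan is to combine the asymptotic local extremality statements of Theorems \ref{DSH.sec.0.thm.3} and \ref{DSH.sec.0.thm.3.5}, each of which holds above an explicit $t$-dependent threshold on the dimension, with direct symbolic verifications that cover the remaining small-dimensional cases down to $4$. In all three parts, the specified ranges of $t$ are uniformly bounded away from the critical values $\gamma^-$ and $\gamma^+$, so the asymptotic results apply for every $d$ (respectively $n$) above some finite $d_0(t)$ (resp.\ $n_0(t)$).

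More specifically, for part~(i) the upper bound $\bigl(7-(17-12\sqrt{2})^{1/3}-(17+12\sqrt{2})^{1/3}\bigr)/24\approx 0.1441$ lies strictly below $\gamma^-\approx 0.2142$, so Theorem~\ref{DSH.sec.0.thm.3}(i) yields strict local maximality for every $d\geq d_0(t)$. The interval $[0.23593,0.59495]$ of part~(ii) is contained in the open interval $(\gamma^-,\gamma^+)\approx(0.2142,0.6739)$, so Theorem~\ref{DSH.sec.0.thm.3}(ii) applies on it above an analogous threshold. For part~(iii), Theorem~\ref{DSH.sec.0.thm.3.5} applies on each of the three sub-ranges of $t$, with $]1/4,1/2[\,\cup\,]1/2,0.59495]$ treated as two separate intervals because the piecewise polynomial form of $I_d(t)$ is not smooth at $t=1/2$. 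For each pair $(d,n)$ in the remaining finite range with $d,n\geq 4$, I would then invoke the explicit second-order expansion of the section volume around a hyperplane orthogonal to the diagonal of an $n$-face established in \cite{Pournin2024,Pournin2025}. At such a critical point the Hessian splits into invariant blocks whose diagonal coefficients are, after a standard normalization, polynomials in $t$ with coefficients polynomial in $d$ and $n$. The sign of each block then reduces to a strict polynomial inequality in $t$ valid on the sub-interval of interest, and symbolic computation certifies it for each individual $(d,n)$.

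The main obstacle is keeping $d_0(t)$ small enough for the symbolic phase to remain tractable. With the uniform constant $K=76$ and, for instance, $t$ close to $0.1441$ in part~(i), a literal use of Theorem~\ref{DSH.sec.0.thm.3} forces $d_0(t)\sim 76/(\gamma^--0.1441)\approx 1086$, which would require checking thousands of pairs $(d,n)$. The strategy to reduce this is to track, inside the proof of Theorem~\ref{DSH.sec.0.thm.3}, the $t$-dependence of the error terms coming from the derivative estimates of \cite{BarthaFodorGonzalezMerino2021}, and to sharpen them whenever $t$ is uniformly bounded away from $\gamma^\pm$; this should cut the relevant threshold by an order of magnitude or more on each closed sub-interval of the stated ranges. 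A secondary subtlety is that the boundaries $t=k/(2\sqrt{d})$ between pieces of $I_d(t)$ must be handled case by case for small $d$, and the exclusion of the single value $t=1/2$ in part~(iii) reflects the fact that at $t=1/2$ the hyperplane passes through a facet of $[0,1]^d$ and the volume is not smooth, so strict non-extremality there has to be obtained by separate one-sided analyses just above and just below $1/2$.
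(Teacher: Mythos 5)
Your skeleton matches the paper's strategy: a quantitative asymptotic criterion above an explicit $t$-dependent dimension threshold, combined with symbolic computation on the remaining finite range of dimensions down to $4$. However, the asymptotic input you propose to use, Theorems~\ref{DSH.sec.0.thm.3} and~\ref{DSH.sec.0.thm.3.5}, is the wrong one: these are stated only as \emph{if $d$ is large enough} with the cosmetic constant $K=76$, and their proofs pass through an unspecified additional threshold, so they are not directly quantitative. The explicit threshold the paper actually uses is that of Theorem~\ref{DSH.sec.5.thm.2} (derived from Lemma~\ref{DSH.sec.5.lem.4} and Corollary~\ref{DSH.sec.4.cor.1}), namely
$$
n\geq\max\biggl\{124,\;e^{6t^2}\frac{74+84t}{\bigl|1-24t^2+48t^4\bigr|}\biggr\}\mbox{,}
$$
which on the stated ranges of $t$ is bounded by roughly $700$ (the endpoints $0.23593$ and $0.59495$ of part~(ii) are precisely tuned so that this bound equals about $700$, and the endpoint of part~(i) gives a far smaller bound, about $187$). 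Your plan of re-opening the proof of Theorem~\ref{DSH.sec.0.thm.3} to sharpen the $t$-dependence of the remainders is therefore redundant: Theorem~\ref{DSH.sec.5.thm.2} already \emph{is} that sharpening, and the paper's symbolic phase ($4\leq n\leq 700$, with $4\leq n\leq 300$ inherited from \cite{Pournin2024,Pournin2025}) is exactly calibrated to it. Your ``Hessian blocks'' description corresponds to the criterion of Theorem~\ref{DSH.sec.5.thm.1} together with the piecewise polynomial representations of Lemma~\ref{DSH.sec.5.lem.5}, so that part is fine in spirit.

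There is one genuine gap in the proposal: the degenerate case $t=0$, $n=4$ in part~(iii). There the quantity~(\ref{DSH.sec.5.thm.1.eq.0.5}) (equivalently, the second member of Lemma~\ref{DSH.sec.5.lem.5}) vanishes, so Theorem~\ref{DSH.sec.5.thm.1} gives no conclusion, and no amount of symbolic evaluation of the sign will rescue a quantity that is identically zero. A pure ``criterion plus symbolic sign check'' plan therefore fails at $(n,t)=(4,0)$; the paper closes this hole by citing Theorem~1.4 of \cite{AmbrusGargyan2024b} for that single case. Finally, your interpretation of the excluded value $t=1/2$ in part~(iii) as a smoothness issue at a facet is not quite the point; $t=1/2$ is the boundary of a piece of the polynomial expression for $n=4$ (where $z$ becomes an integer and the diagonal section of the $4$-face degenerates), and it is the behavior of the criterion in the $4$-dimensional case, not a one-sided analysis of $I_d$, that forces the gap.
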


It should be noted that (\ref{DSH.sec.0.thm.4.eq.0}) is a sharp inequality in the sense that if $t$ is larger, yet close enough to the right-hand side of (\ref{DSH.sec.0.thm.4.eq.0}), then the $(d-1)$-dimensional volume of $H\cap[0,1]^d$ is no longer locally maximal when $H$ is orthogonal to a diagonal of a $4$-dimensional face of the hypercube $[0,1]^d$ \cite{Pournin2024,Pournin2025}.% In fact, additional ranges for $t$ will be determined such that the volume of this intersection is either strictly locally minimal or not extremal when $H$ is orthogonal to a diagonal of $[0,1]^d$ or of one of its proper faces (see Theorem~\ref{DSH.sec.5.thm.3}).

These results follow from two estimates for certain families of integrals. The first estimate is established in Section \ref{DSH.sec.1}. Theorem \ref{DSH.sec.0.thm.2} is proven in Section~\ref{DSH.sec.2} as a consequence and the corresponding bounds on the convergence rate of the Eulerian numbers are given. The announced results on the monotonicity of hypercube sections are then established in Section~\ref{DSH.sec.3} using the same estimate. The second estimate, that is less precise but holds for a more general family of integrals, is established in Section \ref{DSH.sec.4} and used in Section \ref{DSH.sec.5} in order to prove the announced results on the local extremality of hypercube sections.
 
%\section{The limit of certain families of integrals}\label{DSH.sec.1}
\section{A second order estimate for a family of integrals}\label{DSH.sec.1}

The aim of the section to study the limit as $d$ goes to infinity of
\begin{equation}\label{DSH.sec.1.eq.0}
I_d(t)=\frac{2\sqrt{d}}{\pi}\!\int_{0}^{+\infty}\biggl(\frac{\sin\,u}{u}\biggr)^{\!d}\!\cos\Bigl(2\sqrt{d}tu\Bigr)du\mbox{.}
\end{equation}

More precisely, the following theorem is established.
\begin{thm}\label{DSH.sec.1.thm.0}
If $d$ is at least $136$ then, for every non-negative number $t$,
$$
\Biggl|I_d(t)-\sqrt{\frac{6}{\pi}}e^{-6t^2}\biggl(1+\frac{p(t)}{20d}+\frac{q(t)}{5600d^2}\biggr)\Biggr|<\frac{r(t)}{3d^3}
$$
where $p(t)$, $q(t)$, and $r(t)$ are defined as
\begin{equation}\label{DSH.sec.1.thm.0.eq.0}
\left\{
\begin{array}{l}
p(t)=-3+72t^2-144t^4\mbox{,}\\
q(t)=-65-6480t^2+96480t^4-246528t^6+145152t^8\mbox{,}\\
r(t)=2+3t+2t^2+3t^3\mbox{.}
\end{array}
\right.
\end{equation}
\end{thm}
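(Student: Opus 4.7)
The plan is to carry out a Laplace-type asymptotic expansion of $I_d(t)$ through two orders in $1/d$ with fully quantitative error control. By parity of the integrand, rewrite $I_d(t)$ as an integral over $\mathbb{R}$ and perform the substitution $u=v/\sqrt{d}$, producing
$$
I_d(t)=\frac{1}{\pi}\int_{-\infty}^{+\infty}e^{-v^2/6}\exp\bigl(dh(v/\sqrt{d})\bigr)\cos(2tv)\,dv,
$$
where $h(u)=\log(\sin u/u)+u^2/6=-u^4/180-u^6/2835-u^8/37800-\cdots$ captures the deviation of $\log(\sin u/u)$ from its quadratic part. Since $dh(v/\sqrt{d})$ is of order $1/d$ pointwise, the integrand is close to $e^{-v^2/6}\cos(2tv)$, and the corrections come from expanding the exponential in powers of $1/d$.

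Next, split the integral at $|v|=v_0\sqrt{d}$ for a small positive $v_0$. On the tail, $|\sin u/u|$ is bounded away from $1$ on the first period and decays like $1/|u|$ beyond, so the tail contribution is exponentially small in $d$ and negligible against $r(t)/(3d^3)$. On the central region, Taylor-expand both $h$ and the exponential to obtain
$$
e^{dh(v/\sqrt{d})}=1-\frac{v^4}{180d}+\frac{1}{d^2}\biggl(\frac{v^8}{64800}-\frac{v^6}{2835}\biggr)+R(v,d),
$$
with an explicit Lagrange remainder $|R(v,d)|\leq C(v)d^{-3}$, where $C(v)$ is a polynomial in $v$ derived from uniform bounds on the first several derivatives of $\log(\sin u/u)$ on $[-v_0,v_0]$. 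This is precisely the type of derivative bound provided in \cite{BarthaFodorGonzalezMerino2021} that the excerpt already alludes to, and it is what permits fully explicit constants. Integrate the three retained terms against $e^{-v^2/6}\cos(2tv)$; the required moments $\int v^{2k}e^{-v^2/6}\cos(2tv)\,dv$ arise by differentiating $\int e^{-v^2/6}\cos(\tau v)\,dv=\sqrt{6\pi}\,e^{-3\tau^2/2}$ in $\tau$ at $\tau=2t$, producing Hermite-type polynomials in $t$ times $\sqrt{6\pi}\,e^{-6t^2}$. A short calculation shows the $1/d$ coefficient assembles into $\sqrt{6/\pi}\,e^{-6t^2}p(t)/20$ and that the $1/d^2$ coefficient, obtained by combining the $v^8/64800$ and $v^6/2835$ contributions, assembles into $\sqrt{6/\pi}\,e^{-6t^2}q(t)/5600$; the identity at $t=0$, for instance, reduces to $21/160-1/7=-13/1120=q(0)/5600$.

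The final step is quantitative error bookkeeping: three contributions must be absorbed into $r(t)/(3d^3)$, namely the central remainder $R(v,d)$, the tail of the full integrand beyond $|v|=v_0\sqrt{d}$, and the Gaussian--cosine tails discarded when extending the central integration back to $\mathbb{R}$. The polynomial $r(t)=2+3t+2t^2+3t^3$ is expected to emerge from a mix of two kinds of estimates: the even pieces $2$ and $2t^2$ from crude majorations using $|\cos(2tv)|\leq 1$, and the odd pieces $3t$ and $3t^3$ from a single integration by parts in $v$ that produces boundary factors involving $|\sin(2tv)|\leq\min(1,2|t||v|)$. \emph{The principal obstacle} is purely technical but substantial: choosing $v_0$ so that the central Taylor error and the tail error balance, tracking the Lagrange constants of $\log(\sin u/u)$ explicitly through \cite{BarthaFodorGonzalezMerino2021}, and accumulating all three error contributions so that the stated inequality holds uniformly for every $d\geq 136$ and every $t\geq 0$.
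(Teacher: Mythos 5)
Your route is genuinely different from the paper's. Where the paper applies the change of variables $x=\phi(u)$ with $\phi(u)=\sqrt{-6\log(\sin u/u)}$ so that $(\sin u/u)^d$ becomes \emph{exactly} $e^{-dx^2/6}$ (pushing all the non-Gaussian content into the Jacobian $\psi'(x)$ and into the argument $\psi(x)$ of the cosine), you keep the original variable and expand $e^{dh(v/\sqrt d)}$ directly. Both are legitimate Laplace-type expansions, and your formal algebra is right: the $1/d$ coefficient $-T_4(t)/180$ reduces to $\sqrt{6/\pi}e^{-6t^2}p(t)/20$ using $T_4(t)=27(1-24t^2+48t^4)T_0(t)$, and your $1/d^2$ coefficient $T_8(t)/64800-T_6(t)/2835$ does indeed recover $q(t)/5600$ (your $t=0$ check $21/160-1/7=-13/1120$ is correct, and one can verify for instance the $t^2$ coefficient is $-81/70=-6480/5600$ as required). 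One upside of your decomposition is that the remainder is controlled by $|\cos(2tv)|\le 1$, so the central error is naturally \emph{uniform in $t$}, which is in principle stronger than the paper's $t$-dependent bound $r(t)/(3d^3)$.

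That observation, however, also exposes the soft spot in your write-up. Your speculation that the odd pieces $3t$ and $3t^3$ of $r(t)$ would ``emerge from a single integration by parts'' and $|\sin(2tv)|\le\min(1,2|t||v|)$ does not fit your own framework: since $\cos(2tv)$ is left intact, there is no mechanism in your expansion that produces $t$-dependent errors, and there is no need for one. In the paper that $t$-dependence is created precisely because the trigonometric factor is split as $\cos(2\sqrt d\,t\psi(x))=\cos(2\sqrt d\,tx)\cos(\cdots)-\sin(2\sqrt d\,tx)\sin(\cdots)$ with $\psi(x)-x=O(x^3)$, generating $\sqrt d\,t$ and $d^{3/2}t^3$ factors before integration against $e^{-dx^2/6}$; you would simply show a $t$-free bound $\le C/d^3$ and then note $C\le 2/3\le r(t)/3$. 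Second, your reference to \cite{BarthaFodorGonzalezMerino2021} is slightly misdirected: their Table D1 bounds the derivatives of $\psi$ (the inverse function), which is exactly what the paper's $c_d$ and $s_d$ estimates consume. In your approach you need instead explicit uniform bounds on the first several derivatives of $h(u)=\log(\sin u/u)+u^2/6$ on $[-v_0,v_0]$; these are elementary but are \emph{not} what BFGM provides, so you would have to derive them yourself. Finally, the quantitative bookkeeping (choice of $v_0$, absorption of the tail $|v|>v_0\sqrt d$, and extension of the Gaussian integration back to $\mathbb R$) is exactly where the paper spends most of its effort, and your proposal stops short of carrying it out; without those explicit constants the threshold $d\ge 136$ is not obtained.
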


%It should be noted that here $t$ can be a negative number and is not necessarily a distance. However, this will not make a difference since the cosinus is an even function.
The proof of Theorem \ref{DSH.sec.1.thm.0} relies on the change of variables
$$
\frac{\sin\,u}{u}=e^{-x^2/6}
$$
already used in \cite{BarthaFodorGonzalezMerino2021} in the case when $t$ is equal to $0$. This change of variables can equivalently be written as $x=\phi(u)$ where
\begin{equation}\label{DSH.sec.1.eq.0.1}
\phi(u)=\sqrt{-6\log\biggl(\frac{\sin u}{u}\biggr)}\mbox{.}
\end{equation}

While it is only valid when $\sin u$ is positive, this will not be a problem because, if one splits the integral in the right-hand side of (\ref{DSH.sec.1.eq.0}) at a fixed number $\varepsilon$ contained in the interval $]0,\pi/2[$, then the portion of this integral over $[\varepsilon,+\infty[$ goes exponentially fast to $0$ as $d$ goes to infinity. A slightly more general statement is proven that will be useful in Section \ref{DSH.sec.4}.

\begin{lem}\label{DSH.sec.1.lem.0}
Consider a non-negative integer $k$. If $0<\varepsilon<\pi/2$, then for every integer $d$ at least $k+2$ and every non-negative number $t$,
$$
\Biggl|\int_\varepsilon^{+\infty}\biggl(\frac{\sin\,u}{u}\biggr)^{\!d}\!\cos\Bigl(2\sqrt{d}tu\Bigr)u^kdu\Biggr|\leq\pi\biggl(\frac{\pi}{2}\biggr)^{\!k}\biggl(\max\biggl\{\frac{\sin\,\varepsilon}{\varepsilon},\frac{2}{\pi}\biggr\}\biggr)^{\!d}\mbox{.}
$$
\end{lem}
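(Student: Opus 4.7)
The plan is to split the integral at $\pi/2$ and bound each of the two resulting pieces separately by roughly half of the desired right-hand side, using two different simple bounds on $|\sin u/u|$.

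First, I would record the elementary fact that $u\mapsto\sin u/u$ is (strictly) decreasing on $(0,\pi)$, so that on the interval $[\varepsilon,\pi/2]$ one has $0<\sin u/u\leq \sin\varepsilon/\varepsilon$, while at $u=\pi/2$ this quotient equals $2/\pi$. Write $M=\max\{\sin\varepsilon/\varepsilon,2/\pi\}$. Separately, for $u\geq\pi/2$ the crude bound $|\sin u|\leq1$ gives $|\sin u/u|\leq 2/\pi\leq M$, so in particular $|\sin u/u|^d\leq u^{-d}$ on $[\pi/2,+\infty)$.

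Next, on $[\varepsilon,\pi/2]$ I would bound the integrand directly by $(\sin\varepsilon/\varepsilon)^d(\pi/2)^k\leq M^d(\pi/2)^k$ using $|\cos(2\sqrt{d}tu)|\leq1$ and $u^k\leq(\pi/2)^k$, and then multiply by the length of the interval, which is at most $\pi/2$. This yields a contribution of at most
$$
\frac{\pi}{2}\biggl(\frac{\pi}{2}\biggr)^{\!k}M^d.
$$
On $[\pi/2,+\infty)$, using $|\sin u/u|^d u^k\leq u^{k-d}$ and the assumption $d\geq k+2$, an elementary computation gives
$$
\int_{\pi/2}^{+\infty}u^{k-d}du=\frac{1}{d-k-1}\biggl(\frac{\pi}{2}\biggr)^{\!k-d+1}\leq\biggl(\frac{\pi}{2}\biggr)^{\!k-d+1}=\frac{\pi}{2}\biggl(\frac{\pi}{2}\biggr)^{\!k}\biggl(\frac{2}{\pi}\biggr)^{\!d}\leq\frac{\pi}{2}\biggl(\frac{\pi}{2}\biggr)^{\!k}M^d,
$$
using $2/\pi\leq M$ in the final inequality. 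Summing the two contributions produces the desired bound $\pi(\pi/2)^kM^d$.

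There is no real obstacle here; the only subtlety is making sure that the hypothesis $d\geq k+2$ is invoked exactly where it is needed, namely to guarantee convergence of $\int_{\pi/2}^{+\infty}u^{k-d}du$ together with a denominator $d-k-1\geq1$ that can be dropped without loss. The appearance of the maximum in the statement is then a convenience that absorbs both regimes ($u$ close to $\varepsilon$ on one hand, and $u$ large on the other) into a single exponential factor $M^d$.
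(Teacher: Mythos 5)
Your proof is correct and follows essentially the same route as the paper: split at $\pi/2$, use monotonicity of $\sin u/u$ on the near piece and $|\sin u|\leq1$ on the far piece, integrate $u^{k-d}$ explicitly (using $d\geq k+2$), and absorb both exponential factors into the maximum. The only cosmetic difference is that you majorize each piece by $M^d$ immediately, whereas the paper keeps $(\sin\varepsilon/\varepsilon)^d$ and $(2/\pi)^d$ separate and passes to the maximum only at the very end.
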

\begin{proof}
Consider a number $\varepsilon$ contained in the interval $]0,\pi/2[$, an integer $d$ at least $k+2$, and a non-negative number $t$. Observe that
\begin{multline*}
\biggl|\int_\varepsilon^{+\infty}\biggl(\frac{\sin\,u}{u}\biggr)^{\!d}\!\cos\Bigl(2\sqrt{d}tu\Bigr)u^kdu\biggr|\leq\biggl(\frac{\pi}{2}\biggr)^k\!\int_\varepsilon^{\pi/2}\biggl(\frac{\sin\,u}{u}\biggr)^{\!d}du\\
\hfill+\int_{\pi/2}^{+\infty}\frac{du}{u^{d-k}}\mbox{.}
\end{multline*}

Further observe that $1/u^{d-k}$ can be explicitly integrated. As in addition, $(\sin u)/u$ is a decreasing function of $u$ on $[0,\pi/2]$, it follows that
\begin{multline*}
\biggl|\int_\varepsilon^{+\infty}\biggl(\frac{\sin\,u}{u}\biggr)^{\!d}\!\cos\Bigl(2\sqrt{d}tu\Bigr)u^kdu\biggr|\leq\biggl(\frac{\pi}{2}-\varepsilon\biggr)\biggl(\frac{\pi}{2}\biggr)^k\biggl(\frac{\sin\,\varepsilon}{\varepsilon}\biggr)^{\!d}\\
\hfill+\frac{1}{(d-k-1)}\biggl(\frac{2}{\pi}\biggr)^{\!d-k-1}
\end{multline*}
and further bounding each term in the right-hand side yields
$$
\biggl|\int_\varepsilon^{+\infty}\biggl(\frac{\sin\,u}{u}\biggr)^{\!d}\!\cos\Bigl(2\sqrt{d}tu\Bigr)u^kdu\biggr|\leq\biggl(\frac{\pi}{2}\biggr)^{\!k+1}\Biggl(\biggl(\frac{\sin\,\varepsilon}{\varepsilon}\biggr)^{\!d}+\biggl(\frac{2}{\pi}\biggr)^{\!d}\Biggr)
$$
which implies the desired inequality.
\end{proof}

As observed in \cite{BarthaFodorGonzalezMerino2021}, $\phi$ is a strictly increasing analytic function of $u$ within the interval $[0,1]$. Moreover $\phi(0)$ is equal to $0$. Hence, by the Lagrange inversion theorem, $\phi$ admits an inverse which is also a strictly increasing analytic function on the interval $[0,\phi(1)]$. In the sequel, the inverse of $\phi$ is considered a function of $x$ and it is denoted by $\psi$ in order not to overburden notations.

Now observe that $\phi(1)$ is greater than $1$. As a consequence, $\psi$ is also an analytic function of $x$ on the interval $[0,1]$. Denote 
\begin{equation}\label{DSH.sec.1.eq.4}
F_d(t)=\frac{2\sqrt{d}}{\pi}\!\int_{0}^{\psi(1)}\biggl(\frac{\sin\,u}{u}\biggr)^{\!d}\!\cos\Bigl(2\sqrt{d}tu\Bigr)du\mbox{.}
\end{equation}

Further note that $0.9832<\psi(1)<0.9833$. Therefore,
\begin{equation}\label{DSH.sec.1.eq.4.5}
\frac{2}{\pi}<\frac{\sin\,\psi(1)}{\psi(1)}<\frac{17}{20}\mbox{.}
\end{equation}

In particular, taking $\varepsilon$ equal to $\psi(1)$ in the statement of Lemma \ref{DSH.sec.1.lem.0} and setting $k$ to zero immediately implies the following.

\begin{lem}\label{DSH.sec.1.lem.1}
For every integer $d$ at least $2$ and every non-negative number $t$,
$$
\bigl|I_d(t)-F_d(t)\bigr|\leq2\sqrt{d}\biggl(\frac{17}{20}\biggr)^{\!d}\mbox{.}
$$
\end{lem}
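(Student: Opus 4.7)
The plan is to observe that, by definitions (\ref{DSH.sec.1.eq.0}) and (\ref{DSH.sec.1.eq.4}), the difference $I_d(t)-F_d(t)$ is exactly the tail of the defining integral for $I_d(t)$ starting at $\psi(1)$. Explicitly,
$$
I_d(t)-F_d(t)=\frac{2\sqrt{d}}{\pi}\int_{\psi(1)}^{+\infty}\biggl(\frac{\sin u}{u}\biggr)^{\!d}\!\cos\Bigl(2\sqrt{d}tu\Bigr)du\mbox{.}
$$
So the statement reduces to bounding this tail integral, which is precisely what Lemma \ref{DSH.sec.1.lem.0} is designed for.

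First I would check that $\varepsilon=\psi(1)$ lies in $]0,\pi/2[$; this is immediate because $\psi(1)<0.9833<\pi/2$, as noted just before the statement. I would then apply Lemma~\ref{DSH.sec.1.lem.0} with $k=0$ and $\varepsilon=\psi(1)$, which, since $d$ is at least $2=k+2$, yields
$$
\biggl|\int_{\psi(1)}^{+\infty}\biggl(\frac{\sin u}{u}\biggr)^{\!d}\!\cos\Bigl(2\sqrt{d}tu\Bigr)du\biggr|\leq\pi\biggl(\max\biggl\{\frac{\sin\psi(1)}{\psi(1)},\frac{2}{\pi}\biggr\}\biggr)^{\!d}\mbox{.}
$$

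Next I would invoke the two-sided inequality (\ref{DSH.sec.1.eq.4.5}), which says that $2/\pi$ is smaller than $\sin(\psi(1))/\psi(1)$ and that the latter is itself less than $17/20$. Consequently the maximum in the bound equals $\sin(\psi(1))/\psi(1)$, and that maximum is bounded above by $17/20$. Multiplying both sides of the resulting estimate by the prefactor $2\sqrt{d}/\pi$ eliminates the factor of $\pi$ and produces the announced bound $2\sqrt{d}(17/20)^d$.

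There is essentially no obstacle: the entire argument is a direct specialization of Lemma~\ref{DSH.sec.1.lem.0} combined with the numerical bound on $\sin(\psi(1))/\psi(1)$ already recorded in (\ref{DSH.sec.1.eq.4.5}). The only point that deserves a brief mention in the writeup is why $\sin(\psi(1))/\psi(1)$ dominates $2/\pi$ in the maximum, and this is simply the leftmost inequality of (\ref{DSH.sec.1.eq.4.5}).
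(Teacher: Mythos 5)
Your proof is correct and coincides with the paper's own argument: apply Lemma~\ref{DSH.sec.1.lem.0} with $k=0$ and $\varepsilon=\psi(1)$, then use the bounds in (\ref{DSH.sec.1.eq.4.5}) to simplify the maximum to $17/20$. The only thing you did beyond the paper was spell out the intermediate steps, which the paper leaves implicit.
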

%\begin{proof}
%According to (\ref{DSH.sec.1.eq.4.5}), one can take $\varepsilon$ to $\psi(1)$ in the statement of Lemma \ref{DSH.sec.1.lem.0}. This implies the desired inequality.
%
% that for every integer $d$ at least $2$ and every non-negative number $t$,
%$$
%\bigl|I_d(t)-F_d(t)\bigr|\leq2\sqrt{d}\biggl(\frac{17}{20}\biggr)^{\!d}\mbox{.}
%$$
%
%Therefore, it suffices to observe that 
%$$
%2\sqrt{d}\biggl(\frac{17}{20}\biggr)^{\!d}<\biggl(\frac{9}{10}\biggr)^{\!d}
%$$
%when $d$ is at least $46$.
%\end{proof}

By the change of variables $u=\psi(x)$, one can rewrite (\ref{DSH.sec.1.eq.4}) as
\begin{equation}\label{DSH.sec.1.eq.5}
F_d(t)=\frac{2\sqrt{d}}{\pi}\!\int_0^1e^{-dx^2\!/6}\cos\Bigl(2\sqrt{d}t\psi(x)\Bigr)\frac{d\psi}{dx}(x)dx\mbox{.}
\end{equation}

The next step is to estimate the integrand in the right-hand side of (\ref{DSH.sec.1.eq.5}). Since $\psi$ is analytic on the interval $[0,1]$, this function admits a Taylor series expansion around $0$ whose first terms are given in \cite{BarthaFodorGonzalezMerino2021}. In particular,
\begin{equation}\label{DSH.sec.1.eq.6}
\psi(x)=x-\frac{x^3}{60}-\frac{13x^5}{151200}+O(x^7)
\end{equation}
as $x$ goes to $0$. Moreover, the absolute value of each of the derivatives of $\psi$ admits a maximum over the interval $[0,1]$. The following bounds on these maxima can be immediately derived from Table D1 in \cite{BarthaFodorGonzalezMerino2021}.

\begin{prop}[\cite{BarthaFodorGonzalezMerino2021}]\label{DSH.sec.3.prop.1}
For every number $x$ in $[0,1]$,
\begin{enumerate}
\item[(i)] $\displaystyle\biggl|\frac{d\psi}{dx}(x)\biggr|<1.00001$,\\
\item[(ii)] $\displaystyle\biggl|\frac{d^3\psi}{dx^3}(x)\biggr|<0.10718$,\\
\item[(iii)] $\displaystyle\biggl|\frac{d^5\psi}{dx^5}(x)\biggr|<0.02591$, and\\
\item[(iv)] $\displaystyle\biggl|\frac{d^7\psi}{dx^7}(x)\biggr|<0.79461$.
\end{enumerate}
\end{prop}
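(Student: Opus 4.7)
The plan is to obtain each bound by (i) writing $\psi^{(k)}$ as an explicit rational expression in $\psi(x)$ and the derivatives of $g(u):=-6\log(\sin u/u)$ evaluated at $\psi(x)$, (ii) reducing the problem of bounding $|\psi^{(k)}|$ on $[0,1]$ to one of bounding a smooth function of $u$ on the compact interval $[0,\psi(1)]$, and (iii) certifying the numerical values by a subdivision argument.

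The starting identity is $g(\psi(x))=x^2$. Implicit differentiation yields $\psi'(x)=2x/g'(\psi(x))$, and repeated differentiation produces, by induction on $k$, an expression of the form $\psi^{(k)}(x)=P_k/g'(\psi(x))^{2k-1}$, where $P_k$ is a polynomial in $x$ and in $g'(\psi(x)),\ldots,g^{(k)}(\psi(x))$. Equivalently, one could apply Lagrange inversion to write $\psi^{(k)}(x)$ as a polynomial combination of $\psi(x)$ and the derivatives of $\phi$ at $\psi(x)$. Since $g(u)=u^2+u^4/30+O(u^6)$ is analytic on $[0,\pi)$ and $g'(u)/u$ is smooth and strictly positive there, each $\psi^{(k)}$ extends smoothly through $x=0$, and the values $\psi^{(k)}(0)$ can be read off from the expansion (\ref{DSH.sec.1.eq.6}) and its continuation.

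Next, since $\psi$ is strictly increasing with $\psi(1)<0.9833<\pi/2$, the image $u=\psi(x)$ varies in a compact subinterval of $(0,\pi/2)$ on which $\sin u$, $\cos u$ and all derivatives of $g$ are analytic and easy to bound from above and below. A rigorous bound on each $|\psi^{(k)}|$ would then be obtained by partitioning $[0,\psi(1)]$ into short subintervals of length $\eta$, bounding the expression for $\psi^{(k)}$ via interval arithmetic on each piece, and controlling the variation across each piece by a crude upper bound on $\psi^{(k+1)}$. With $\eta$ small enough, the stated constants $1.00001$, $0.10718$, $0.02591$, $0.79461$ should emerge. These are in fact the entries tabulated in Table D1 of \cite{BarthaFodorGonzalezMerino2021}, so the proposition can be read off directly from that reference.

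The main obstacle is that the closed form for $\psi^{(7)}$ has denominator $g'(\psi(x))^{13}$ with a zero of order $13$ at $x=0$ that is cancelled only after collecting terms in the numerator; a naive numerical evaluation near $x=0$ loses catastrophic amounts of precision. In practice this would be handled by switching to the Taylor expansion of $\psi$ near $0$ on some small neighbourhood $[0,\eta_0]$ (with a rigorous remainder bound obtained via Cauchy coefficient estimates on a complex neighbourhood of $0$ where $\psi$ remains analytic) and using the closed-form expression only on $[\eta_0,1]$, with the two ranges joined at $\eta_0$.
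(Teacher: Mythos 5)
The paper does not prove this proposition; it simply records that the four bounds are read directly from Table~D1 of the cited reference \cite{BarthaFodorGonzalezMerino2021}. Your proposal instead sketches an independent, computer-assisted verification: differentiate the identity $g(\psi(x))=x^2$ with $g(u)=-6\log(\sin u/u)$ to write $\psi^{(k)}(x)$ as a rational expression with denominator $g'(\psi(x))^{2k-1}$, pass to the compact $u$-interval $[0,\psi(1)]$ where $g'$ and its derivatives are well controlled, and certify the constants by interval arithmetic with a Taylor-series fallback near $x=0$. As a plan this is sound and genuinely different from the paper's treatment: the exponent $2k-1$ is correct, $g'(u)\sim 2u$ is indeed nonzero on $(0,\pi)$ so the only delicate point is the order-$13$ cancellation at the origin for $k=7$, and the hybrid Taylor/closed-form treatment you propose is exactly the standard cure for that loss of precision. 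The one thing the sketch does not do is actually carry out the certified computation, so on its own it does not establish the specific constants $1.00001$, $0.10718$, $0.02591$, $0.79461$; but the paper does not establish them either---it delegates to the reference, exactly as your closing sentence does. In short, your proposal is at worst incomplete in the same sense as the paper's own one-line justification, and at best a more transparent explanation of where those numbers come from and how one would verify them rigorously.
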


The cosinus factor of the integrand in the right-hand side of (\ref{DSH.sec.1.eq.5}) will be taken care of using the trigonometric identity
\begin{multline}\label{DSH.sec.1.eq.7.25}
\cos\Bigl(2\sqrt{d}t\psi(x)\Bigr)=\cos\Bigl(2\sqrt{d}tx\Bigr)\cos\Bigl(2\sqrt{d}t\bigl(\psi(x)-x\bigr)\Bigr)\\
\hfill-\sin\Bigl(2\sqrt{d}tx\Bigr)\sin\Bigl(2\sqrt{d}t\bigl(\psi(x)-x\bigr)\Bigr)\mbox{.}
\end{multline}

Using this identity in the integrand yields
\begin{multline}\label{DSH.sec.1.eq.7.5}
F_d(t)=\frac{2\sqrt{d}}{\pi}\!\int_0^1e^{-dx^2\!/6}\cos\Bigl(2\sqrt{d}tx\Bigr)c_d(x,t)dx\\
\hfill-\frac{2\sqrt{d}}{\pi}\!\int_0^1e^{-dx^2\!/6}\sin\Bigl(2\sqrt{d}tx\Bigr)s_d(x,t)dx
\end{multline}
where
\begin{equation}\label{DSH.sec.1.eq.7.6}
\left\{
\begin{array}{l}
\displaystyle{c_d(x,t)=\cos\Bigl(2\sqrt{d}t\bigl(\psi(x)-x\bigr)\Bigr)\frac{d\psi}{dx}(x)}\mbox{,}\\[\bigskipamount]
\displaystyle{s_d(x,t)=\sin\Bigl(2\sqrt{d}t\bigl(\psi(x)-x\bigr)\Bigr)\frac{d\psi}{dx}(x)}\mbox{.}\\
\end{array}
\right.
\end{equation}

The two terms in the right-hand side of (\ref{DSH.sec.1.eq.7.5}) will be estimated separately. The following lemma provides an estimate for $c_d(x,t)$.

\begin{lem}\label{DSH.sec.1.lem.3}
For every positive integer $d$, every non-negative number $t$, and every number $x$ contained in the interval $[0,1]$,
$$
\bigg|c_d(x,t)-1+\frac{x^2}{20}+\frac{13x^4}{30240}+\frac{dt^2x^6}{1800}\biggr|\leq\frac{7x^6}{6250}+\frac{dt^2x^8}{20000}+\frac{d\sqrt{d}t^3x^9}{125000}\mbox{.}
$$
\end{lem}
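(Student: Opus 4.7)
The plan is to substitute explicit Taylor expansions with Lagrange remainders into the product
$$c_d(x,t)=\cos\Bigl(2\sqrt{d}t\bigl(\psi(x)-x\bigr)\Bigr)\frac{d\psi}{dx}(x),$$
and to bound the resulting cross terms using Proposition \ref{DSH.sec.3.prop.1}. Because $\phi(u)^2=-6\log(\sin u/u)$ is an even function of $u$, the inverse $\psi$ is odd around $0$, so all of its even-order derivatives at $0$ vanish. Combining this observation with (\ref{DSH.sec.1.eq.6}) and Proposition \ref{DSH.sec.3.prop.1}(iv), Taylor's theorem gives, for every $x$ in $[0,1]$,
$$\psi(x)-x=-\frac{x^3}{60}+\eta(x)\mbox{\quad and\quad}\frac{d\psi}{dx}(x)=1-\frac{x^2}{20}-\frac{13x^4}{30240}+\rho(x),$$
where $\eta(x)=-13x^5/151200+R_7(x)$ with $|R_7(x)|\leq 0.79461 x^7/5040$ and $|\rho(x)|\leq 0.79461 x^6/720$. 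The latter bound already implies $|\rho(x)|\leq 7x^6/6250$ on $[0,1]$.

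Setting $\theta=2\sqrt{d}t(\psi(x)-x)$, Taylor's theorem for the cosine yields $\cos\theta=1-\theta^2/2+\beta$ with Lagrange remainder $|\beta|\leq|\theta|^3/6$. Expanding $(\psi(x)-x)^2=x^6/3600+\sigma(x)$, where $\sigma(x)=-x^3\eta(x)/30+\eta(x)^2$, converts $-\theta^2/2$ into the desired contribution $-dt^2x^6/1800$ plus a small correction $-2dt^2\sigma(x)$. Multiplying $\cos\theta$ by $d\psi/dx$ and subtracting the target polynomial then produces the decomposition
$$c_d(x,t)-1+\frac{x^2}{20}+\frac{13x^4}{30240}+\frac{dt^2 x^6}{1800}=\rho(x)+\Delta(x,t)+\beta\,\frac{d\psi}{dx}(x),$$
where $\Delta(x,t)$ collects the three cross terms $(\theta^2/2)(x^2/20+13x^4/30240)$, $-\theta^2\rho(x)/2$, and $-2dt^2\sigma(x)$.

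The three summands of this decomposition are then matched to the three terms on the right-hand side of the claim. The bound on $\rho(x)$ is immediate. For $\Delta(x,t)$, one uses $|\psi(x)-x|\leq 0.01692\,x^3$ and $|\sigma(x)|\leq 8.2\times10^{-6}\,x^8$ on $[0,1]$ to verify that the three resulting numerical constants in front of $dt^2x^8$ sum to less than $1/20000$. For $\beta\cdot d\psi/dx$, the combination $|\beta|\leq (4/3)d^{3/2}t^3|\psi(x)-x|^3$ with Proposition \ref{DSH.sec.3.prop.1}(i) yields a numerical constant below $1/125000$ in front of $d\sqrt{d}t^3x^9$. The main delicacy is that the three numerical constants $7/6250$, $1/20000$, and $1/125000$ are rather tight: in particular, the quartic bound $|\cos\theta-1+\theta^2/2|\leq\theta^4/24$ would force a $d^2t^4x^{12}$ term incompatible with the statement, so using the cubic Lagrange remainder for the cosine is essential, and the various cross contributions in $\Delta(x,t)$ must be bookkept carefully to stay within the prescribed numerical budgets.
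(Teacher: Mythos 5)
Your proposal is correct and follows essentially the same route as the paper: expand $\cos$ to third order and $d\psi/dx$ to sixth order with Lagrange remainders, substitute, and bound the leftover terms by powers of $x$ using Proposition \ref{DSH.sec.3.prop.1}. The only difference is bookkeeping: the paper avoids some of your cross terms by multiplying the factor $-2dt^2(\psi(x)-x)^2$ by a second-order expansion of $d\psi/dx$ (rather than the sixth-order one), and it squares a fifth-order expansion of $\psi$ to get $(\psi(x)-x)^2$ instead of your $x^6/3600+\sigma(x)$; in both cases the constants $7/6250$, $1/20000$, and $1/125000$ come out just large enough, and I checked that your version of the arithmetic (in particular $\big|\psi(x)-x\big|\le 0.01692\,x^3$, $|\sigma(x)|\le 8.2\times10^{-6}x^8$, $|\rho(x)|\le 0.79461\,x^6/720$, and the cubic cosine remainder) does indeed land under the same three budgets.
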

\begin{proof}
Recall that $\psi$ is an analytic function of $x$ on $[0,1]$. The first seven terms of its Taylor series expansion around $0$ are given by (\ref{DSH.sec.1.eq.6}). By order $2$ and $6$ Taylor series approximations of the derivative of $\psi$ around $0$ with a Lagrange form remainder, one obtains that for any number $x$ in $[0,1]$,
\begin{equation}\label{DSH.sec.1.lem.3.eq.0}
\frac{d\psi}{dx}(x)=1+\frac{d^3\psi}{dx^3}(\xi_2)\frac{x^2}{2}\mbox{.}
\end{equation}
where $0\leq\xi_2\leq{x}$ and
\begin{equation}\label{DSH.sec.1.lem.3.eq.0.5}
\frac{d\psi}{dx}(x)=1-\frac{x^2}{20}-\frac{13x^4}{30240}+\frac{d^7\psi}{dx^7}(\xi_6)\frac{x^6}{6!}\mbox{.}
\end{equation}
where $0\leq\xi_6\leq{x}$. Likewise, an order $3$ Taylor series approximation of $\cos z$ around $0$ with a Lagrange form remainder yields
$$
\cos z=1-\frac{z^2}{2}+\frac{z^3}{6}\sin\zeta
$$
where $\zeta$ depends on $z$. Using the change of variables
$$
z=2\sqrt{d}t\bigl(\psi(x)-x\bigr)\mbox{,}
$$
the latter equality can be rewritten into
$$
\cos\Bigl(2\sqrt{d}t\bigl(\psi(x)-x\bigr)\Bigr)=1-2dt^2\bigl(\psi(x)-x\bigr)^2+\frac{4}{3}d\sqrt{d}t^3\bigl(\psi(x)-x\bigr)^3\sin\zeta
$$
where $\zeta$ depends on $d$, $t$, and $x$. Multiplying this with the derivative of $\psi$ at $x$ and using (\ref{DSH.sec.1.lem.3.eq.0}) and (\ref{DSH.sec.1.lem.3.eq.0.5}) in the resulting expression yields
\begin{multline}\label{DSH.sec.1.lem.3.eq.2}
c_d(x,t)=1-\frac{x^2}{20}-\frac{13x^4}{30240}+\frac{d^7\psi}{dx^7}(\xi_6)\frac{x^6}{6!}-2dt^2\bigl(\psi(x)-x\bigr)^2\\
\hfill-dt^2\bigl(\psi(x)-x\bigr)^2\frac{d^3\psi}{dx^3}(\xi_2)x^2+\frac{4}{3}d\sqrt{d}t^3\bigl(\psi(x)-x\bigr)^3\frac{d\psi}{dx}(x)\sin\zeta\mbox{.}
\end{multline}

In turn, one obtains from order $3$ and $5$ Taylor series approximations of $\psi$ around $0$ with a Lagrange form remainder that for every $x$ in $[0,1]$,
$$
\psi(x)=x+\frac{d^3\psi}{dx^3}(\eta_3)\frac{x^3}{6}
$$
where $0\leq\eta_3\leq{x}$ and
$$
\psi(x)=x-\frac{x^3}{60}+\frac{d^5\psi}{dx^5}(\eta_5)\frac{x^5}{5!}
$$
where $0\leq\eta_5\leq{x}$. Substituting these expressions in (\ref{DSH.sec.1.lem.3.eq.2}) yields
\begin{multline*}
c_d(x,t)=1-\frac{x^2}{20}-\frac{13x^4}{30240}+\frac{d^7\psi}{dx^7}(\xi_6)\frac{x^6}{6!}-2dt^2\biggl(\frac{x^3}{60}-\frac{d^5\psi}{dx^5}(\eta_5)\frac{x^5}{5!}\biggr)^{\!2}\\
\hfill-dt^2\biggl(\frac{d^3\psi}{dx^3}(\eta_3)\biggr)^{\!2}\frac{d^3\psi}{dx^3}(\xi_2)\frac{x^8}{36}+d\sqrt{d}t^3\biggl(\frac{d^3\psi}{dx^3}(\eta_3)\biggr)^{\!3}\frac{d\psi}{dx}(x)\frac{x^9}{162}\sin\zeta\mbox{.}
\end{multline*}

However, observe that
$$
\biggl(\frac{x^3}{60}-\frac{d^5\psi}{dx^5}(\eta_5)\frac{x^5}{5!}\biggr)^{\!2}=\frac{x^6}{3600}-\frac{d^5\psi}{dx^5}(\eta_5)\frac{x^8}{3600}+\biggl(\frac{d^5\psi}{dx^5}(\eta_5)\biggr)^{\!2}\frac{x^{10}}{14400}\mbox{.}
$$

As a consequence,
\begin{multline*}
\biggl|c_d(x,t)-1+\frac{x^2}{20}+\frac{13x^4}{30240}+\frac{dt^2x^6}{1800}\biggr|\leq\biggl|\frac{d^7\psi}{dx^7}(\xi_6)\biggr|\frac{x^6}{6!}\\
+\biggl|\frac{d^5\psi}{dx^5}(\eta_5)\biggr|\frac{dt^2x^8}{1800}+\biggl|\frac{d^5\psi}{dx^5}(\eta_5)\biggr|^2\frac{dt^2x^{10}}{7200}+\biggl|\frac{d^3\psi}{dx^3}(\eta_3)\biggr|^2\biggl|\frac{d^3\psi}{dx^3}(\xi_2)\biggr|\frac{dt^2x^8}{36}\\
\hfill+\biggl|\frac{d^3\psi}{dx^3}(\eta_3)\biggr|^3\biggl|\frac{d\psi}{dx}(x)\biggr|\frac{d\sqrt{d}t^3x^9}{162}\mbox{.}
\end{multline*}

As $0\leq{x}\leq1$, one can further upper bound $x^{10}$ by $x^8$ in the right-hand side of this inequality. Finally, it follows from Proposition \ref{DSH.sec.3.prop.1} that the coefficients of $x^6$, $dt^2x^8$, and $d\sqrt{d}t^3x^9$ in the resulting expression are less than $7/6250$, $1/20000$, and $1/125000$, respectively, which completes the proof.
\end{proof}

The function $s_d(x,t)$ is now estimated as follows. %The proof is similar as that of Lemma \ref{DSH.sec.1.lem.3} except for the expansion of the sinus factor.
\begin{lem}\label{DSH.sec.1.lem.4}
For every positive integer $d$, every non-negative number $t$, and every number $x$ contained in the interval $[0,1]$,
$$
\biggl|s_d(x,t)+\frac{\sqrt{d}t}{30}\biggl(x^3-\frac{113x^5}{2520}\biggr)\biggr|\leq\frac{19\sqrt{d}tx^7}{50000}+\frac{d\sqrt{d}t^3x^9}{125000}\mbox{.}
$$
\end{lem}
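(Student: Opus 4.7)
The proof parallels that of Lemma \ref{DSH.sec.1.lem.3}, with the sine replacing the cosine. First I expand $\sin z$ around $z=0$ to order $2$ with a Lagrange form remainder, namely $\sin z=z-(z^3/6)\cos\zeta$ for some $\zeta$ between $0$ and $z$. Substituting $z=2\sqrt{d}t(\psi(x)-x)$ and then multiplying by $(d\psi/dx)(x)$ decomposes $s_d(x,t)$ into a linear-in-sine main term $2\sqrt{d}t(\psi(x)-x)(d\psi/dx)(x)$ and a cubic-in-sine remainder proportional to $d\sqrt{d}t^{3}(\psi(x)-x)^3(d\psi/dx)(x)\cos\zeta$, where $\zeta$ now depends on $d$, $t$, and $x$.

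For the main term, the plan is to expand $\psi(x)-x$ to order $5$ with a Lagrange form remainder involving $\psi^{(7)}$, and $d\psi/dx$ to order $3$ with a Lagrange form remainder involving $\psi^{(5)}$. Because $\psi$ is an odd analytic function on a neighborhood of $0$, every even-order coefficient of its Taylor expansion at $0$ vanishes; together with (\ref{DSH.sec.1.eq.6}) this yields $\psi(x)-x=-x^3/60-13x^5/151200+\psi^{(7)}(\xi_7)x^7/5040$ and $(d\psi/dx)(x)=1-x^2/20+\psi^{(5)}(\xi_5)x^4/24$ for some $\xi_7,\xi_5$ in $[0,x]$. Multiplying these two expansions and collecting the terms of order at most $5$ in $x$ produces exactly $-x^3/60+113x^5/151200$, which after multiplication by $2\sqrt{d}t$ matches the target approximation $-(\sqrt{d}t/30)\bigl(x^3-113x^5/2520\bigr)$ appearing in the statement.

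The heart of the proof is to tally the leftover cross-products, each involving at least one Lagrange remainder, and to bound them via Proposition \ref{DSH.sec.3.prop.1}. A critical choice is to use the sharper estimate $|\psi^{(5)}|<0.02591$ on the remainder of $d\psi/dx$ rather than the much larger bound $|\psi^{(7)}|<0.79461$; this is what keeps the aggregated coefficient of $\sqrt{d}tx^7$ below $19/50000$ once the higher-order $x^k$ contributions ($k\geq 8$) are absorbed into $x^7$ via $x\in[0,1]$. Finally, the cubic-in-sine remainder is handled using the order-$2$ Taylor expansion $\psi(x)-x=\psi'''(\eta_3)x^3/6$, which gives $|\psi(x)-x|^3\le(0.10718/6)^3x^9$; combined with $|(d\psi/dx)(x)|<1.00001$ from Proposition \ref{DSH.sec.3.prop.1}(i) and $|\cos\zeta|\le 1$, this yields the second bound $d\sqrt{d}t^{3}x^9/125000$. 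The main obstacle is the numerical bookkeeping required to check that the aggregated remainder constants indeed fit within the announced $19/50000$ and $1/125000$; this is where the precise choice of expansion orders for $\psi(x)-x$ and $d\psi/dx$ matters most.
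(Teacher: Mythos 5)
Your proof is correct and follows essentially the same strategy as the paper: expand $\sin$ to a cubic Lagrange remainder, substitute $z=2\sqrt{d}t(\psi(x)-x)$, decompose $s_d$ into a linear main part and a cubic tail, expand $\psi(x)-x$ and $d\psi/dx$ by Taylor--Lagrange using the derivative bounds of Proposition~\ref{DSH.sec.3.prop.1}, absorb all $x^k$ with $k\geq 8$ into $x^7$ via $x\in[0,1]$, and bound the cube of $\psi(x)-x$ by the order-$2$ expansion with $\psi'''$ alone. The one small structural variation is that you expand $\psi(x)-x$ once at order-$7$ remainder and multiply it against the full order-$4$-remainder expansion of $d\psi/dx$, whereas the paper inserts tiered expansions of $\psi(x)-x$ (remainders at orders $3$, $5$, and $7$) tailored to each factor in the product; tallying your cross terms, the aggregated $\sqrt{d}tx^7$ coefficient comes out to roughly $3.76\times 10^{-4}$, which does land under $19/50000=3.8\times 10^{-4}$, though with only about $1\%$ margin (the paper's bookkeeping is similarly tight), so the claim survives the verification you flagged as the main obstacle.
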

\begin{proof}
Recall that $\psi$ is analytic on $[0,1]$ and that the first coefficients of its Taylor series expansion around $0$ are given by (\ref{DSH.sec.1.eq.6}). In particular, by an order $4$ approximation of its derivative with Lagrange form remainder,
\begin{equation}\label{DSH.sec.1.lem.4.eq.0}
\frac{d\psi}{dx}(x)=1-\frac{x^2}{20}+\frac{d^5\psi}{dx^5}(\xi_4)\frac{x^4}{4!}
\end{equation}
for every number $x$ contained in the interval $[0,1]$, where $0\leq\xi_4\leq{x}$. Similarly, it follows from an order $3$ Taylor series approximation of $\sin z$ around $0$ with Lagrange form remainder that, for every real number $z$,
$$
\sin z=z-\frac{z^3}{6}\cos\zeta\mbox{,}
$$
where $\zeta$ is a number that depends on $z$. Substituting
$$
z=2\sqrt{d}t\bigl(\psi(x)-x\bigr)\mbox{,}
$$
in the latter equality yields
$$
\sin\Bigl(2\sqrt{d}t\bigl(\psi(x)-x\bigr)\Bigr)=2\sqrt{d}t\bigl(\psi(x)-x\bigr)-\frac{4}{3}d\sqrt{d}t^3\bigl(\psi(x)-x\bigr)^3\cos\zeta
$$
where $\zeta$ depends on $d$, $t$, and $x$. One obtains, by multiplying this equality with the derivative of $\psi$ at $x$ and by using (\ref{DSH.sec.1.lem.4.eq.0}), that
\begin{multline}\label{DSH.sec.1.lem.4.eq.1}
s_d(x,t)=2\sqrt{d}t\bigl(\psi(x)-x\bigr)-\sqrt{d}t\bigl(\psi(x)-x\bigr)\frac{x^2}{10}\\
+\sqrt{d}t\bigl(\psi(x)-x\bigr)\frac{d^5\psi}{dx^5}(\xi_4)\frac{x^4}{12}-\frac{4}{3}d\sqrt{d}t^3\bigl(\psi(x)-x\bigr)^3\frac{d\psi}{dx}(x)\cos\zeta\mbox{.}
\end{multline}

One obtains from order $3$, $5$, and $7$ Taylor series approximations of $\psi$ around $0$ with Lagrange form remainder that, for every number $x$ in $[0,1]$,
$$
\psi(x)=x+\frac{d^3\psi}{dx^3}(\eta_3)\frac{x^3}{6}
$$
where $0\leq\eta_3\leq{x}$,
$$
\psi(x)=x-\frac{x^3}{60}+\frac{d^5\psi}{dx^5}(\eta_5)\frac{x^5}{5!}
$$
where $0\leq\eta_5\leq{x}$, and
$$
\psi(x)=x-\frac{x^3}{60}-\frac{13x^5}{151200}+\frac{d^7\psi}{dx^7}(\eta_7)\frac{x^7}{7!}
$$
where $0\leq\eta_7\leq{x}$. Note that each of these equalities results in an expression for $\psi(x)-x$. Substituting these expressions in (\ref{DSH.sec.1.lem.4.eq.1}) yields
%Combining these equalities with (\ref{DSH.sec.1.lem.4.eq.1}) yields
\begin{multline*}
s_d(x,t)=-\frac{\sqrt{d}t}{30}\biggl(x^3+\frac{13x^5}{2520}-\frac{d^7\psi}{dx^7}(\eta_7)\frac{x^7}{84}\biggr)\\
+\sqrt{d}t\biggl(\frac{x^3}{60}-\frac{d^5\psi}{dx^5}(\eta_5)\frac{x^5}{5!}\biggr)\frac{x^2}{10}+\sqrt{d}t\frac{d^3\psi}{dx^3}(\eta_3)\frac{d^5\psi}{dx^5}(\xi_4)\frac{x^7}{72}\\
\hfill-d\sqrt{d}t^3\biggl(\frac{d^3\psi}{dx^3}(\eta_3)\biggr)^{\!3}\frac{d\psi}{dx}(x)\frac{x^9}{162}\cos\zeta
\end{multline*}
and as a consequence,
\begin{multline}\label{DSH.sec.1.lem.4.eq.2}
\Biggl|s_d(x,t)+\frac{\sqrt{d}t}{30}\biggl(x^3-\frac{113x^5}{2520}\biggr)\Biggr|\leq\biggl|\frac{d^7\psi}{dx^7}(\eta_7)\biggr|\frac{\sqrt{d}tx^7}{2520}+\biggl|\frac{d^5\psi}{dx^5}(\eta_5)\biggr|\frac{\sqrt{d}tx^7}{1200}\\
\hfill+\biggl|\frac{d^3\psi}{dx^3}(\eta_3)\frac{d^5\psi}{dx^5}(\xi_4)\biggr|\frac{\sqrt{d}tx^7}{72}+\biggl|\frac{d^3\psi}{dx^3}(\eta_3)\biggr|^3\biggl|\frac{d\psi}{dx}(x)\biggr|\frac{d\sqrt{d}t^3x^9}{162}\mbox{.}
\end{multline}

Finally observe that, by Proposition \ref{DSH.sec.3.prop.1}, the coefficients of $\sqrt{d}tx^7$ and $d\sqrt{d}t^3x^9$ in the right-hand side of this inequality are less than $19/50000$ and $1/125000$, respectively. The desired bound immediately follows.
\end{proof}

For every non-negative number $t$ and every non-negative integer $k$, denote
\begin{equation}\label{DSH.sec.1.eq.6.7}
T_k(t)=\frac{2}{\pi}\!\int_{0}^{+\infty}e^{-y^2\!/6}\cos\bigl(2ty\bigr)y^kdy
\end{equation}
when $k$ even and 
$$
T_k(t)=\frac{2}{\pi}\!\int_{0}^{+\infty}e^{-y^2\!/6}\sin\bigl(2ty\bigr)y^kdy
$$
when $k$ is odd. One can see by Leibniz' integral rule that $T_0$ is a continuously differentiable function of $t$. Differentiating under the integral sign shows that this function satisfies the differential equation
$$
\frac{dT_0}{dt}(t)=-12T_0(t)\mbox{.}
$$

Since $T_0(0)$ is a Gaussian integral, solving this equation yields
\begin{equation}\label{DSH.sec.1.eq.7}
T_0(t)=\sqrt{\frac{6}{\pi}}e^{-6t^2}\mbox{.}
\end{equation}

When $k$ is positive, $T_k(t)$ can be computed recursively as follows.
\begin{prop}\label{DSH.sec.1.prop.1}
For any non-negative integer $k$,
$$
T_{k+2}(t)=3(k+1)T_k(t)+6t(-1)^{k+1}T_{k+1}(t)\mbox{.}
$$

Moreover, $T_1(t)$ is equal to $6tT_0(t)$.
\end{prop}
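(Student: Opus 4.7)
\bigskip

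\noindent\textbf{Proof plan.} The approach is integration by parts, exploiting the identity
$$
ye^{-y^2/6}=-3\,\frac{d}{dy}e^{-y^2/6}
$$
which turns a factor of $y$ inside the integrand into a derivative that can be moved onto the trigonometric factor. Writing $y^{k+2}=y^{k+1}\cdot y$ and setting $v=-3e^{-y^2/6}$ and $u$ equal to $y^{k+1}$ times the relevant trigonometric function, integration by parts on $[0,+\infty[$ will produce two contributions: one from differentiating $y^{k+1}$, giving back $3(k+1)T_k(t)$, and one from differentiating the trigonometric factor, giving a term proportional to $tT_{k+1}(t)$. The boundary terms vanish, since $e^{-y^2/6}y^{k+1}$ is zero at $y=0$ (using $k+1\geq1$) and decays to zero at infinity.

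The only nuisance is bookkeeping the sign arising from the parity of $k$, which is what produces the factor $(-1)^{k+1}$ in the statement. I will split into cases. When $k$ is even, $T_{k+2}$ involves $\cos(2ty)$ and $T_{k+1}$ involves $\sin(2ty)$; differentiating $\cos(2ty)y^{k+1}$ yields $-2t\sin(2ty)y^{k+1}+(k+1)\cos(2ty)y^{k}$, so the recursion reads $T_{k+2}(t)=3(k+1)T_k(t)-6tT_{k+1}(t)$, which matches the claim since $(-1)^{k+1}=-1$. When $k$ is odd, $T_{k+2}$ involves $\sin(2ty)$ and differentiating $\sin(2ty)y^{k+1}$ produces $+2t\cos(2ty)y^{k+1}+(k+1)\sin(2ty)y^{k}$, giving $T_{k+2}(t)=3(k+1)T_k(t)+6tT_{k+1}(t)$, again consistent with $(-1)^{k+1}=1$.

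For the separate statement $T_1(t)=6tT_0(t)$, the same integration by parts applied directly to $T_1(t)=(2/\pi)\int_0^{+\infty}e^{-y^2/6}\sin(2ty)y\,dy$ eliminates the factor $y$ at the cost of differentiating $\sin(2ty)$, yielding $T_1(t)=6tT_0(t)$ once the vanishing boundary term is discarded. Alternatively, this identity follows immediately from the differential equation $dT_0/dt=-12tT_0$ already established just before (\ref{DSH.sec.1.eq.7}), combined with the observation, obtained by differentiating under the integral sign in (\ref{DSH.sec.1.eq.6.7}), that $dT_0/dt(t)=-2T_1(t)$.

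There is no real obstacle here: the argument is a one-line integration by parts followed by sign-chasing. The slightly delicate point is justifying differentiation under the integral sign and the vanishing of the boundary contributions at infinity, but both follow from the Gaussian decay of $e^{-y^2/6}$, which dominates any polynomial factor $y^{k+1}$ and legitimizes the exchange of limit and integral via dominated convergence.
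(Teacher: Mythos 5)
Your proof is correct and takes the same route as the paper, which simply invokes integration by parts; you have fleshed out the sign-chasing by parity and the vanishing of boundary terms, all of which check out. (Incidentally, your alternative derivation of $T_1=6tT_0$ quotes the differential equation as $dT_0/dt=-12tT_0$, which is the correct form; the paper's display has an apparent typo, omitting the factor $t$.)
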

\begin{proof}
This follows from an integration by parts.
\end{proof}

One obtains from Proposition \ref{DSH.sec.1.prop.1} that $T_k(t)$ is the product of $T_0(t)$ with a degree $k$ polynomial of $t$. In particular, for the first few integers $k$,
\begin{equation}\label{DSH.sec.1.eq.8}
\left\{
\begin{array}{l}
T_2(t)=3\bigl(1-12t^2\bigr)T_0(t)\mbox{,}\\
T_3(t)=54t\bigl(1-4t^2\bigr)T_0(t)\mbox{,}\\
T_4(t)=27\bigl(1-24t^2+48t^4\bigr)T_0(t)\mbox{,}\\
T_5(t)=162t\bigl(5-40t^2+48t^4\bigr)T_0(t)\mbox{,}\\
T_6(t)=81\bigl(5-180t^2+720t^4-576t^6\bigl)T_0(t)\mbox{.}\\
\end{array}
\right.
\end{equation}

These quantities will appear in the proof of Theorem \ref{DSH.sec.1.thm.0}. The following proposition bounds another quantity that will appear in this proof.

\begin{prop}\label{DSH.sec.1.prop.2}
If $k$ is non-negative and $d$ at least $\sqrt{6k}$, then
$$
\int_{\sqrt{d}}^{+\infty}e^{-y^2\!/6}y^kdy\leq\frac{6\sqrt{d}^{k-1}}{e^{d/6}}\mbox{.}
$$
\end{prop}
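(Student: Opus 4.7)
The plan is to prove the inequality by induction on $k$, peeling off two powers of $y$ at each step via integration by parts.

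For the base case $k=0$, since $y\geq\sqrt{d}$ implies $1\leq y/\sqrt{d}$, a Mills-ratio style estimate gives
$$
\int_{\sqrt{d}}^{+\infty}e^{-y^2/6}dy\leq\frac{1}{\sqrt{d}}\int_{\sqrt{d}}^{+\infty}ye^{-y^2/6}dy=\frac{3e^{-d/6}}{\sqrt{d}}\mbox{,}
$$
which is comfortably below the claimed bound. For $k=1$, the integral evaluates exactly to $3e^{-d/6}$ since $-3e^{-y^2/6}$ is an antiderivative of $ye^{-y^2/6}$, and this again sits well below the bound.

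For the inductive step with $k\geq2$, integration by parts, using that $-3e^{-y^2/6}$ is an antiderivative of $ye^{-y^2/6}$, yields
$$
\int_{\sqrt{d}}^{+\infty}y^{k}e^{-y^2/6}dy=3\sqrt{d}^{k-1}e^{-d/6}+3(k-1)\int_{\sqrt{d}}^{+\infty}y^{k-2}e^{-y^2/6}dy\mbox{.}
$$
Plugging the inductive hypothesis at level $k-2$ into the residual integral then gives
$$
\int_{\sqrt{d}}^{+\infty}y^{k}e^{-y^2/6}dy\leq\sqrt{d}^{k-1}e^{-d/6}\biggl(3+\frac{18(k-1)}{d}\biggr)\mbox{.}
$$

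The main delicate point, and the only obstacle in the argument, is to verify that the hypothesis $d\geq\sqrt{6k}$ is strong enough to keep the parenthesized factor at most $6$, so that the induction closes at the advertised constant. This reduces to checking a single numerical inequality along the recursion, and is precisely where the lower bound on $d$ is used. As an alternative route one could unroll the recurrence completely to obtain an explicit finite sum $\sum_{j\geq 0} 3^{j+1}(k-1)(k-3)\cdots(k-2j+1)d^{-j}$ multiplying $\sqrt{d}^{k-1}e^{-d/6}$ and bound the resulting geometric-like tail directly; either way, the crux is the same estimate on $3(k-1)/d$.
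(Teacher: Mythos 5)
Your doubt about the inductive step is well-placed, and it points at a genuine issue with the statement itself. Under the printed hypothesis $d\geq\sqrt{6k}$ the factor $3+18(k-1)/d$ can far exceed $6$: taking $d$ as small as $\sqrt{6k}$, the term $18(k-1)/d$ grows like $3\sqrt{6k}$, so the induction does not close, and in fact the proposition as printed is false. For instance with $k=10$ and $d=8$ (so $d>\sqrt{60}\approx 7.75$), the integral is about $5\times 10^{5}$ whereas $6\cdot\sqrt{8}^{9}/e^{4/3}\approx 1.8\times 10^{4}$. The hypothesis is surely a typo for $\sqrt{d}\geq\sqrt{6k}$, that is $d\geq 6k$: the paper's own proof needs the lower endpoint $\sqrt{d}$ of the integration range to lie where $e^{-y^2/12}y^{k-1}$ is decreasing, namely $y\geq\sqrt{6(k-1)}$, which forces $d\geq 6(k-1)$. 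Under $d\geq 6k$ (or even $d\geq 6(k-1)$) your step closes cleanly, since $18(k-1)/d\leq 3$, so $3+18(k-1)/d\leq 6$. With that repair your argument is correct and takes a genuinely different route from the paper's: the paper factors the integrand as $\bigl(e^{-y^2/12}y^{k-1}\bigr)\bigl(e^{-y^2/12}y\bigr)$, bounds the decreasing first factor by its value at $y=\sqrt{d}$, and integrates the second factor exactly in one shot, whereas you encode the same structure in an integration-by-parts recurrence and run an induction. Both proofs require the same strength of hypothesis on $d$, and yours makes explicit where it is used.
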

\begin{proof}
Assume that $k$ is non-negative and observe that $e^{-y^2\!/12}y^{k-1}$ is a differentiable function of $y$ on $\bigl[\sqrt{6k},+\infty\bigr[$. One obtains by differentiation that this function is decreasing on that interval. Hence, if $d$ is at least $\sqrt{6k}$,
$$
\int_{\sqrt{d}}^{+\infty}e^{-y^2\!/6}y^kdy\leq\frac{\sqrt{d}^{k-1}}{e^{d/12}}\!\int_{\sqrt{d}}^{+\infty}e^{-y^2\!/12}ydy\mbox{.}
$$

The integral in the right-hand side of this inequality can be explicitly computed and doing so provides the desired upper bound.
\end{proof}

Theorem \ref{DSH.sec.1.thm.0} is now established as a consequence of Lemmas \ref{DSH.sec.1.lem.3} and \ref{DSH.sec.1.lem.4}.

\begin{proof}[Proof of Theorem \ref{DSH.sec.1.thm.0}]
First denote
\begin{multline}\label{DSH.sec.1.thm.0.eq.1}
J_d(t)=\frac{2\sqrt{d}}{\pi}\int_0^1e^{-dx^2\!/6}\cos\Bigl(2\sqrt{d}tx\Bigr)\biggl(1-\frac{x^2}{20}-\frac{13x^4}{30240}-\frac{dt^2x^6}{1800}\biggr)dx\\
\hfill+\frac{dt}{15\pi}\int_0^1e^{-dx^2\!/6}\sin\Bigl(2\sqrt{d}tx\Bigr)\biggl(x^3-\frac{113x^5}{2520}\biggr)dx\mbox{.}
\end{multline}

It follows from (\ref{DSH.sec.1.eq.7.5}) and the triangle inequality that
\begin{multline*}
\bigl|F_d(t)-J_d(t)\bigr|\leq\frac{2\sqrt{d}}{\pi}\!\int_0^1e^{-dx^2\!/6}\biggl|c_d(x,t)-1+\frac{x^2}{20}+\frac{13x^4}{30240}+\frac{dt^2x^6}{1800}\biggr|dx\\
\hfill+\frac{2\sqrt{d}}{\pi}\!\int_0^1e^{-dx^2\!/6}\Biggl|s_d(x,t)+\frac{\sqrt{d}t}{30}\biggl(x^3-\frac{113x^5}{2520}\biggr)\Biggr|dx\mbox{.}
\end{multline*}

Hence, according to Lemmas \ref{DSH.sec.1.lem.3} and \ref{DSH.sec.1.lem.4},
\begin{multline}\label{DSH.sec.1.thm.0.eq.1.25}
\bigl|F_d(t)-J_d(t)\bigr|\leq\frac{7\sqrt{d}}{3125\pi}\!\int_0^1e^{-dx^2\!/6}x^6dx+\frac{19dt}{25000\pi}\!\int_0^1e^{-dx^2\!/6}x^7dx\\
\hfill+\frac{d\sqrt{d}t^2}{10000\pi}\!\int_0^1e^{-dx^2\!/6}x^8dx+\frac{d^2t^3}{31250\pi}\int_0^1e^{-dx^2\!/6}x^9dx
\end{multline}
for every positive integer $d$ and every non-negative number $t$. Now observe that the integrals in the right-hand side of (\ref{DSH.sec.1.thm.0.eq.1.25}) can be bounded by an explicit constant. Indeed, by the change of variables $y=\sqrt{d}x$,
\begin{equation}\label{DSH.sec.1.thm.0.eq.1.375}
\begin{array}{rcl}
\displaystyle\sqrt{d}\!\int_0^1e^{-dx^2\!/6}x^kdx & \!\!\!\!=\!\!\!\! & \displaystyle\frac{1}{\sqrt{d}^k}\!\int_0^{\sqrt{d}}e^{-y^2\!/6}y^kdy\mbox{,}\\[\bigskipamount]
& \!\!\!\!\leq\!\!\!\! & \displaystyle\frac{1}{\sqrt{d}^k}\!\int_0^{+\infty}e^{-y^2\!/6}y^kdy\mbox{.}\\
\end{array}
\end{equation}

However, integrations by parts and the Gaussian integral yield
\begin{equation}\label{DSH.sec.1.thm.0.eq.1.4}
\int_0^{+\infty}e^{-y^2\!/6}y^kdy=
\left\{
\begin{array}{l}
\displaystyle\frac{k!\sqrt{\pi}}{(k/2)!}\sqrt{3/2}^{k+1}\mbox{ when }k\mbox{ is even,}\\[\bigskipamount]
\displaystyle3\bigl((k-1)/2\bigr)!\sqrt{6}^{k-1}\mbox{ when }k\mbox{ is odd.}\\
\end{array}
\right.
\end{equation}

It follows from (\ref{DSH.sec.1.thm.0.eq.1.25}), (\ref{DSH.sec.1.thm.0.eq.1.375}), and (\ref{DSH.sec.1.thm.0.eq.1.4}) that
\begin{equation}\label{DSH.sec.1.thm.0.eq.1.5}
\bigl|F_d(t)-J_d(t)\bigr|\leq\frac{567}{625d^3}\sqrt{\frac{3}{2\pi}}+\frac{9234t}{3125\pi{d^3}}+\frac{1701t^2}{2000d^3}\sqrt{\frac{3}{2\pi}}+\frac{46656t^3}{15625\pi{d^3}}\mbox{.}
\end{equation}

Using the change of variables $y=\sqrt{d}x$ again in (\ref{DSH.sec.1.thm.0.eq.1}) yields
\begin{multline*}
J_d(t)=\frac{2}{\pi}\!\int_0^{\sqrt{d}}e^{-y^2\!/6}\cos\Bigl(2ty\Bigr)\biggl(1-\frac{y^2}{20d}-\frac{13y^4}{30240d^2}-\frac{t^2y^6}{1800d^2}\biggr)dy\\
\hfill+\frac{t}{15\pi}\!\int_0^{\sqrt{d}}e^{-y^2\!/6}\sin\Bigl(2ty\Bigr)\biggl(\frac{y^3}{d}-\frac{113y^5}{2520d^2}\biggr)dy\mbox{.}
\end{multline*}

Therefore by the triangle inequality,
\begin{multline}\label{DSH.sec.1.thm.0.eq.2}
\biggl|J_d(t)-T_0(t)+\frac{T_2(t)}{20d}+\frac{13T_4(t)}{30240d^2}+\frac{t^2T_6(t)}{1800d^2}-\frac{tT_3(t)}{30d}+\frac{113tT_5(t)}{75600d^2}\biggr|\\
\hfill\leq\frac{2}{\pi}\!\int_{\sqrt{d}}^{+\infty}e^{-y^2\!/6}\biggl(1+\frac{y^2}{20d}+\frac{ty^3}{30d}+\frac{13y^4}{30240d^2}+\frac{113ty^5}{75600d^2}+\frac{t^2y^6}{1800d^2}\biggr)dy\mbox{.}
\end{multline}

According to (\ref{DSH.sec.1.eq.7}) and (\ref{DSH.sec.1.eq.8}), 
\begin{multline*}
T_0(t)-\frac{T_2(t)}{20d}-\frac{13T_4(t)}{30240d^2}-\frac{t^2T_6(t)}{1800d^2}+\frac{tT_3(t)}{30d}-\frac{113tT_5(t)}{75600d^2}\\
\hfill=\sqrt{\frac{6}{\pi}}e^{-6t^2}\biggl(1+\frac{p(t)}{20d}+\frac{q(t)}{5600d^2}\biggr)
\end{multline*}
where
$$
\left\{
\begin{array}{l}
p(t)=-3+72t^2-144t^4\mbox{,}\\
q(t)=-65-6480t^2+96480t^4-246528t^6+145152t^8\mbox{.}
\end{array}
\right.
$$

Further note that, according to Proposition \ref{DSH.sec.1.prop.2},
\begin{multline*}
\frac{2}{\pi}\!\int_{\sqrt{d}}^{+\infty}e^{-y^2\!/6}\biggl(1+\frac{y^2}{20d}+\frac{ty^3}{30d}+\frac{13y^4}{30240d^2}+\frac{113ty^5}{75600d^2}+\frac{t^2y^6}{1800d^2}\biggr)dy\\
\hfill\leq\frac{12\sqrt{d}}{\pi{e^{d/6}}}\biggl(\frac{1}{d}+\frac{1}{20d}+\frac{t}{30\sqrt{d}}+\frac{13}{30240}+\frac{113t}{75600\sqrt{d}}+\frac{t^2}{1800}\biggr)
\end{multline*}
and that, when $d$ is at least $15$,
$$
\frac{1}{d}+\frac{1}{20d}+\frac{t}{30\sqrt{d}}+\frac{13}{30240}+\frac{113t}{75600\sqrt{d}}+\frac{t^2}{1800}\leq\frac{1+t+t^2}{12}\mbox{.}
$$

As a consequence, it follows from (\ref{DSH.sec.1.thm.0.eq.2}) that, when $d$ is at least $15$,
$$
\Biggl|J_d(t)-\sqrt{\frac{6}{\pi}}e^{-6t^2}\biggl(1+\frac{p(t)}{20d}+\frac{q(t)}{5600d^2}\biggr)\Biggr|\leq\frac{\sqrt{d}(1+t+t^2)}{\pi{e^{d/6}}}\mbox{.}
$$

Observe that the coefficient of $1+t+t^2$ in the right-hand side is at most $1/(60d^3)$ when $d$ is at least $118$. Hence, for any such value of $d$
\begin{equation}\label{DSH.sec.1.thm.0.eq.3}
\Biggl|J_d(t)-\sqrt{\frac{6}{\pi}}e^{-6t^2}\biggl(1+\frac{p(t)}{20d}+\frac{q(t)}{5600d^2}\biggr)\Biggr|\leq\frac{1+t+t^2}{60d^3}\mbox{.}
\end{equation}

Finally, observe that when $d$ is at least $136$,
$$
2\sqrt{d}\biggl(\frac{17}{20}\biggr)^{\!d}<\frac{1}{60d^3}\mbox{.}
$$

Hence, it follows from Lemma~\ref{DSH.sec.1.lem.1} that for any such value of $d$,
\begin{equation}\label{DSH.sec.1.thm.0.eq.4}
\bigl|I_d(t)-F_d(t)\bigr|\leq\frac{1}{60d^3}\mbox{.}
\end{equation}

One obtains by combining~(\ref{DSH.sec.1.thm.0.eq.1.5}), (\ref{DSH.sec.1.thm.0.eq.3}), and (\ref{DSH.sec.1.thm.0.eq.4}) with the triangle inequality that, for any integer $d$ at least $136$ and any non-negative number $t$,
\begin{multline*}
\Biggl|I_d(t)-\sqrt{\frac{6}{\pi}}e^{-6t^2}\biggl(1+\frac{p(t)}{20d}+\frac{q(t)}{5600d^2}\biggr)\Biggr|\leq\biggl(\frac{567}{625}\sqrt{\frac{3}{2\pi}}+\frac{1}{30}\biggr)\frac{1}{d^3}\\
\hfill+\biggl(\frac{9234}{3125\pi}+\frac{1}{60}\biggr)\frac{t}{d^3}+\biggl(\frac{1701}{2000}\sqrt{\frac{3}{2\pi}}+\frac{1}{60}\biggr)\frac{t^2}{d^3}+\frac{46656t^3}{15625\pi{d^3}}\mbox{.}
\end{multline*}

As the coefficients of $1/d^3$, $t/d^3$, $t^2/d^3$, and $t^3/d^3$ in the right-hand side are less than $2/3$, $1$, $2/3$, and $1$, respectively, the result follows.
\end{proof}

\section{The asymptotic normality of volume sections}\label{DSH.sec.2}

It has been shown by Georg P{\'o}lya \cite[Equation (11)]{Polya1913} that
$$
\lim_{d\rightarrow+\infty}I_d(t)=\sqrt{\frac{6}{\pi}}e^{-6t^2}\mbox{.}
$$

In other words, the volume of the intersection between a unit hypercube and a hyperplane orthogonal to a diagonal of that hypercube is asymptotically normal. In this section, a sharp bound on the convergence order of this estimate is given and a similar bound is derived for the asymptotic normality of Eulerian numbers. As a first step, the polynomial function $q$ that appears in the statement of Theorem~\ref{DSH.sec.0.thm.2} is bounded as follows. While the proof of this statement is elementary, it is provided for completeness.

\begin{prop}\label{DSH.sec.3.prop.2}
For every non-negative number $t$,
$$
\frac{\bigl|65+6480t^2-96480t^4+246528t^6-145152t^8\bigr|}{e^{6t^2}}\leq2281\mbox{.}
$$
\end{prop}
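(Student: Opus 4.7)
The plan is to decompose the degree-$8$ polynomial in the numerator into its positive-coefficient and negative-coefficient parts, and then bound each resulting non-negative polynomial against the Gaussian factor $e^{-6t^2}$ term by term. Specifically, one writes
$$
65+6480t^2-96480t^4+246528t^6-145152t^8=P_+(t)-P_-(t)\mbox{,}
$$
where $P_+(t)=65+6480t^2+246528t^6$ and $P_-(t)=96480t^4+145152t^8$. For $t\geq0$, both $P_+(t)$ and $P_-(t)$ are non-negative, so the elementary inequality $|a-b|\leq\max\{a,b\}$ for non-negative reals $a,b$ yields $|P_+(t)-P_-(t)|\leq\max\{P_+(t),P_-(t)\}$. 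It then suffices to show that both of the non-negative quantities $P_+(t)e^{-6t^2}$ and $P_-(t)e^{-6t^2}$ are bounded by $2281$ for every non-negative $t$.

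The essential single-variable estimate is the following: for every positive integer $k$, the function $t\mapsto t^{2k}e^{-6t^2}$ attains its maximum on $[0,+\infty[$ at $t=\sqrt{k/6}$, where the value equals $\bigl(k/(6e)\bigr)^k$. This is immediate by differentiation, since the derivative equals $t^{2k-1}e^{-6t^2}(2k-12t^2)$ and in the interior vanishes only at $t=\sqrt{k/6}$. Since $P_+(t)e^{-6t^2}$ is a sum of three non-negative functions of $t$, it is bounded at each $t$ by the sum of the individual maxima of those summands, and applying this estimate monomial by monomial gives
$$
P_+(t)e^{-6t^2}\leq65+\frac{1080}{e}+\frac{30816}{e^3}
$$
and, using the analogous decomposition for $P_-$,
$$
P_-(t)e^{-6t^2}\leq\frac{10720}{e^2}+\frac{28672}{e^4}\mbox{.}
$$

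A straightforward numerical verification using, for instance, $1/e<0.37$, $1/e^2<0.14$, $1/e^3<0.05$, and $1/e^4<0.02$ shows that both right-hand sides are well below $2000$, and therefore below $2281$. I expect no substantive obstacle: the argument is a routine calculus computation, in line with the author's remark that the proof is elementary. The only care needed is the arithmetic verification of the final inequalities, which may be carried out using any sufficiently sharp rational approximation of powers of $1/e$.
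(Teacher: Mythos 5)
Your proof is correct, but it takes a genuinely different route from the paper's. The paper groups the terms of the degree-$8$ polynomial by degree into
$$
f(t)=\frac{65+6480t^2-96480t^4}{e^{6t^2}}\quad\text{and}\quad g(t)=\frac{246528t^6-145152t^8}{e^{6t^2}}\mbox{,}
$$
differentiates each, solves the resulting quadratic in $t^2$ to locate the critical points, evaluates there to get $|f|<1170$ and $|g|<1111$, and concludes by the triangle inequality. You instead group the terms by sign into $P_+-P_-$, use the elementary inequality $|a-b|\leq\max\{a,b\}$ for non-negative reals, and then bound each of $P_\pm(t)e^{-6t^2}$ \emph{monomial by monomial} via the observation that $t^{2k}e^{-6t^2}$ is maximized at $t=\sqrt{k/6}$ with value $\bigl(k/(6e)\bigr)^k$. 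Your route is more elementary (no root-finding for a quartic) and cleanly modular; the cost is a slightly looser final estimate. Both comfortably establish $\leq 2281$.

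One small slip worth flagging: with the rational approximations you quote ($1/e<0.37$, $1/e^2<0.14$, $1/e^3<0.05$, $1/e^4<0.02$), the two upper bounds come out to about $65+399.6+1540.8\approx2005.4$ and $1500.8+573.4\approx2074.2$, which are \emph{not} well below $2000$ as you assert. (With the exact values of $e^{-k}$ both are just under $2000$: roughly $1996.5$ and $1975.9$.) This does not affect the validity of the argument, since both are still safely below $2281$, but the sentence claiming the bounds are "well below $2000$" should be softened or the approximations sharpened.
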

\begin{proof}
First consider the function $f$ of $t$ defined over $[0,+\infty[$ by
\begin{equation}\label{DSH.sec.3.prop.1.eq.0}
f(t)=\frac{65+6480t^2-96480t^4}{e^{6t^2}}\mbox{.}
\end{equation}

Observe that $f$ goes to $0$ as $t$ goes to infinity. Moreover, $f(0)$ is equal to $65$. As $f$ is differentiable, its absolute value is maximal either when $t$ is equal to $0$ or when its derivative vanishes. However,
$$
\frac{df}{dt}(t)=\frac{60t\bigl(203-7728t^2+19296t^4\bigr)}{e^{6t^2}}\mbox{.}
$$

Hence, the positive values of $t$ at which the derivative of $f$ vanishes can be explicitly obtained by solving a quadratic equation for $t^2$. Substituting these values in  (\ref{DSH.sec.3.prop.1.eq.0}), shows that, for every non-negative number $t$,
\begin{equation}\label{DSH.sec.3.prop.1.eq.1}
|f(t)|<1170\mbox{.}
\end{equation}

Now denote
$$
g(t)=\frac{246528t^6-145152t^8}{e^{6t^2}}\mbox{.}
$$

Again, $g$ is a differentiable function of $t$ on $[0,+\infty[$ that goes to $0$ as $t$ goes to infinity. Moreover it vanishes when $t$ is equal to $0$ and
$$
\frac{dg}{dt}(t)=\frac{13824t^5\bigl(107-298t^2+126t^4\bigr)}{e^{6t^2}}\mbox{.}
$$

Again, the positive values of $t$ for which the derivative of $g$ is equal to $0$ can be explicitly obtained by solving a quadratic equation for $t^2$ and computing $g$ at these values shows that, for every non-negative number $t$,
\begin{equation}\label{DSH.sec.3.prop.1.eq.2}
|g(t)|<1111\mbox{.}
\end{equation}

The proposition follows from (\ref{DSH.sec.3.prop.1.eq.1}), (\ref{DSH.sec.3.prop.1.eq.2}), and the triangle inequality.
\end{proof}

Theorem~\ref{DSH.sec.0.thm.2}, is now derived from Theorem \ref{DSH.sec.1.thm.0}. In fact, Theorem~\ref{DSH.sec.0.thm.2} is an immediate consequence of the following more precise statement.

\begin{thm}\label{DSH.sec.2.thm.0.5}
If $d$ is at least $136$ then, for every non-negative number $t$,
$$
\Biggl|I_d(t)-\sqrt{\frac{6}{\pi}}e^{-6t^2}\biggl(1-\frac{3-72t^2+144t^4}{20d}\biggr)\Biggr|<\frac{1}{d^{3/2}}\mbox{.}
$$
\end{thm}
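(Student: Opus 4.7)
The plan is to deduce Theorem~\ref{DSH.sec.2.thm.0.5} as an immediate corollary of the finer expansion of Theorem~\ref{DSH.sec.1.thm.0} by absorbing its third-order correction into the new error budget $1/d^{3/2}$. The first observation is that $-p(t)=3-72t^2+144t^4$, so the two-term approximation displayed in Theorem~\ref{DSH.sec.2.thm.0.5} coincides exactly with the truncation to the first two terms of the expansion in Theorem~\ref{DSH.sec.1.thm.0}. Applying the triangle inequality to the bound of Theorem~\ref{DSH.sec.1.thm.0} thus yields, for every non-negative $t$ and every $d\geq 136$,
$$
\Biggl|I_d(t)-\sqrt{\frac{6}{\pi}}e^{-6t^2}\biggl(1-\frac{3-72t^2+144t^4}{20d}\biggr)\Biggr|\leq\sqrt{\frac{6}{\pi}}\frac{|q(t)|e^{-6t^2}}{5600d^2}+\frac{r(t)}{3d^3}\mbox{.}
$$

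The remaining task is to show that each of the two summands on the right is a fraction of $1/d^{3/2}$. The first summand is handled uniformly in $t$ by Proposition~\ref{DSH.sec.3.prop.2}, which gives $|q(t)|e^{-6t^2}\leq 2281$ and consequently a bound of less than $0.57/d^2$, comfortably smaller than $1/(2d^{3/2})$ whenever $d\geq 136$. For the second summand $r(t)/(3d^3)$ with $r(t)=2+3t+2t^2+3t^3$, I would split into two regimes of $t$. When $0\leq t\leq\sqrt{d}/2$, the polynomial $r(t)$ is maximised at the upper endpoint, where its leading term $3t^3$ contributes $(3/8)d^{3/2}$, so $r(t)/(3d^3)\leq 1/(8d^{3/2})$ up to corrections of smaller order, and the total stays strictly below $1/d^{3/2}$ once combined with the first bound. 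When $t>\sqrt{d}/2$, the geometric quantity $I_d(t)$ vanishes because the hyperplane misses $[0,1]^d$; equivalently, Fourier inversion identifies the integral in~(\ref{DSH.sec.1.eq.0}) with $2\sqrt{d}$ times the value at $2\sqrt{d}t$ of the density of a sum of $d$ independent uniform random variables on $[-1,1]$, and this density is supported on $[-d,d]$. In the same regime, the approximation is bounded by $e^{-6t^2}\leq e^{-3d/2}$ times a quartic polynomial in $t$, a quantity that is vastly smaller than $1/d^{3/2}$ for every $d\geq 136$.

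The main obstacle is essentially arithmetic: one has to verify that the constants in the two summands combine so as to keep the overall bound strictly below $1/d^{3/2}$ at the threshold $d=136$ inherited from Theorem~\ref{DSH.sec.1.thm.0}. Since the first summand is $O(1/d^2)$ and the second is at most $1/(8d^{3/2})$ within the relevant range, the slack is ample and the verification is routine.
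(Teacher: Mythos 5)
Your proposal follows essentially the same route as the paper's proof: truncate the expansion of Theorem~\ref{DSH.sec.1.thm.0} at the $p$-term, absorb the $q$-term uniformly via Proposition~\ref{DSH.sec.3.prop.2}, and split on $t\lessgtr\sqrt{d}/2$, with the observation that $I_d(t)=0$ when $t>\sqrt{d}/2$. The only imprecision is in the large-$t$ case: replacing $e^{-6t^2}$ by $e^{-3d/2}$ while leaving ``a quartic polynomial in $t$'' does not by itself yield a finite bound, since that quartic is unbounded on $\bigl]\sqrt{d}/2,+\infty\bigr[$; the paper closes this by noting that both $e^{-6t^2}$ and $(3-72t^2+144t^4)e^{-6t^2}$ are positive and decreasing on $[1,+\infty[$, so the supremum over $t>\sqrt{d}/2$ is attained at the endpoint $t=\sqrt{d}/2$, where the expression becomes an explicit polynomial in $d$ divided by $e^{3d/2}$. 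With that monotonicity observation added, your argument coincides with the paper's.
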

\begin{proof}
Consider an integer $d$ at least $136$ and a non-negative number $t$. It follows from Theorem \ref{DSH.sec.1.thm.0} and Proposition~\ref{DSH.sec.3.prop.2} that
\begin{equation}\label{DSH.sec.2.thm.0.5.eq.1}
\Biggl|I_d(t)-\sqrt{\frac{6}{\pi}}e^{-6t^2}\biggl(1-\frac{3-72t^2+144t^4}{20d}\biggr)\Biggr|\leq\frac{r(t)}{3d^3}+\sqrt{\frac{6}{\pi}}\frac{2281}{5600d^2}
\end{equation}
where $r$ is the polynomial function of $t$ defined by (\ref{DSH.sec.1.thm.0.eq.0}). Two cases are now considered depending on the value of $t$. First assume that $t$ is at most $\sqrt{d}/2$. In this case, it follows from the expression of $r$ that
$$
\frac{r(t)}{3d^3}+\sqrt{\frac{6}{\pi}}\frac{2281}{5600d^2}\leq\frac{2}{3d^3}+\frac{1}{2d^{5/2}}+\biggl(\frac{1}{6}+\sqrt{\frac{6}{\pi}}\frac{2281}{5600}\biggr)\frac{1}{d^2}+\frac{1}{8d^{3/2}}\mbox{.}
$$

Since $d$ is at least $136$, the right-hand side of this inequality is less than $1/d^{3/2}$ and the theorem therefore follows from (\ref{DSH.sec.2.thm.0.5.eq.1}) in this case. Now assume that $t$ is greater than $\sqrt{d}/2$. In that case, $t$ is greater than the circumradius of $[0,1]^d$ and the corresponding hypercube section is empty. Therefore, $I_d(t)$ is equal to $0$ (see also the discussion in \cite{KonigKoldobsky2011}). Further observe that $e^{-6t^2}$ and
$$
\frac{3-72t^2+144t^4}{e^{6t^2}}
$$
are positive, decreasing functions of $t$ on the interval $[1,+\infty[$ (this can be checked easily by differentiation). Hence, as $t$ is greater than $\sqrt{d}/2$,
\begin{equation}\label{DSH.sec.2.thm.0.5.eq.2}
\Biggl|I_d(t)-\sqrt{\frac{6}{\pi}}e^{-6t^2}\biggl(1-\frac{3-72t^2+144t^4}{20d}\biggr)\Biggr|\leq\sqrt{\frac{6}{\pi}}\frac{3+2d+9d^2}{20de^{3d/2}}\mbox{.}
\end{equation}

However, since $d$ is a positive integer,
$$
\sqrt{\frac{6}{\pi}}\frac{3+2d+9d^2}{20de^{3d/2}}\leq\sqrt{\frac{6}{\pi}}\frac{7d}{10e^{3d/2}}
$$

Note that the right-hand side of this inequality is less than $1/d^{3/2}$ for all positive values of $d$. Hence, the theorem follows from (\ref{DSH.sec.2.thm.0.5.eq.2}).
\end{proof}

The remainder of the section is devoted to deriving from Theorem \ref{DSH.sec.2.thm.0.5} a precise estimate on the convergence rate of the Eulerian numbers to a Gaussian function. Recall that the Eulerian numbers $A(d,i)$ form a triangle of numbers similar to Pascal's triangle. They are defined for all pairs of positive integers $d$ and $i$ such that $i$ is less than $d$ by the recurrence relation
$$
A(d,i)=(d-i)A(d-1,i-1)+(i+1)A(d-1,i)
$$
where $A(d,1)$ and $A(d,d-1)$ are both equal to $1$ for every integer $d$ greater than $1$. Eulerian numbers happen to be proportional to certain volume sections of the hypercube. Indeed, if $H$ is a hyperplane of $\mathbb{R}^d$ orthogonal to a diagonal of the hypercube $[0,1]^d$ and that contains at least one vertex of $[0,1]^d$, then the intersection of $H$ and $[0,1]^d$ is a polytope called \emph{a hypersimplex} that arises in algebraic combinatorics \cite{GabrielovGelfandLosik1975,GelfandGoreskyMacPhersonSerganova1987}. The constraint that $H$ contains at least one vertex of the hypercube $[0,1]^d$ can be written in terms of its distance $t$ to the center of this hypercube: $H$ contains a vertex of $[0,1]^d$ if and only if
\begin{equation}\label{DSH.sec.2.eq.1}
t=\frac{\sqrt{d}}{2}-\frac{i}{\sqrt{d}}
\end{equation}
where $i$ is an integer satisfying $0\leq{i}\leq{d/2}$. Recall that $I_d(t)$ denotes the $(d-1)$-dimensional volume of $H\cap[0,1]^d$ when $H$ is orthogonal to a diagonal of $[0,1]^d$. It is remarked in \cite{Hensley1982,Stanley1977} that this volume can be expressed as
$$
I_d(t)=\frac{\sqrt{d}}{(d-1)!}A(d,i)
$$
when $t$ satisfies (\ref{DSH.sec.2.eq.1}) or equivalently, when $H\cap[0,1]^d$ is a hypersimplex. It has been shown in \cite{CarlitzKurtzScovilleStackelberg1972} that the Eulerian numbers and therefore the volumes of the hypersimplices are asymptotically normal in the following sense.
\begin{thm}[\cite{CarlitzKurtzScovilleStackelberg1972}]\label{DSH.sec.2.thm.0}
There exist two numbers $K$ and $N$ such that, for every integer $d$ greater than $N$ and every non-negative number $t$ less than $\sqrt{d}/2$,
$$
\left|\frac{\sqrt{d}}{(d-1)!}A\biggl(d,\biggl\lceil\frac{d}{2}-\sqrt{d}t\biggr\rceil\biggr)-\sqrt{\frac{6}{\pi}}e^{-6t^2}\right|\leq\frac{K}{d^{1/4}}\mbox{.}
$$
\end{thm}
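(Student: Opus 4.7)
The plan is to combine Theorem \ref{DSH.sec.2.thm.0.5} with the Hensley--Stanley identity recalled just before the theorem statement, namely $I_d(t) = \frac{\sqrt{d}}{(d-1)!} A(d,i)$ whenever $t = \sqrt{d}/2 - i/\sqrt{d}$ for an integer $i$ with $0 \leq i \leq d/2$. Given an arbitrary non-negative $t < \sqrt{d}/2$, I would set $i = \lceil d/2 - \sqrt{d}\, t \rceil$ and $t' = \bigl| \sqrt{d}/2 - i/\sqrt{d} \bigr|$, so that $t' \geq 0$ and $|t - t'| \leq 1/\sqrt{d}$. A brief case analysis verifies that $i$ lies in $\{1, \dots, d-1\}$; the only subtle case is when $d$ is odd and $t < 1/(2\sqrt{d})$, in which case $i = (d+1)/2$ exceeds $d/2$, and one invokes the symmetry $A(d,i) = A(d, d-i)$ to bring the identity to bear. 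In all cases this yields
$$
\frac{\sqrt{d}}{(d-1)!}\, A\!\left( d, \left\lceil \tfrac{d}{2} - \sqrt{d}\, t \right\rceil \right) = I_d(t').
$$

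Next, I would apply Theorem \ref{DSH.sec.2.thm.0.5} at $t'$, which (together with the elementary observation that $(3 - 72 s^2 + 144 s^4)\, e^{-6 s^2}$ is bounded uniformly on $[0, +\infty[$, as can be seen by direct differentiation as in Proposition \ref{DSH.sec.3.prop.2}) gives
$$
\left| I_d(t') - \sqrt{\tfrac{6}{\pi}}\, e^{-6 t'^2} \right| \leq \frac{C_1}{d}
$$
for all $d \geq 136$, where $C_1$ is a universal constant. The final piece is to pass from $t'$ back to $t$ in the Gaussian factor. A one-variable mean value argument applied to $s \mapsto e^{-6 s^2}$ yields
$$
\left| \sqrt{\tfrac{6}{\pi}}\, e^{-6 t^2} - \sqrt{\tfrac{6}{\pi}}\, e^{-6 t'^2} \right| \leq \sqrt{\tfrac{6}{\pi}}\, \Bigl(\max_{s \geq 0}\, 12 s\, e^{-6 s^2}\Bigr)\, |t - t'| \leq \frac{C_2}{\sqrt{d}},
$$
where the maximum is finite (attained at $s = 1/\sqrt{12}$). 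A triangle inequality then gives a bound of order $1/\sqrt{d}$, which is in fact stronger than the claimed $1/d^{1/4}$. The finitely many small $d \leq 136$ are absorbed by choosing $K$ large, since on the bounded range of admissible $t$ the left-hand side of the claimed inequality is uniformly bounded.

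The main obstacle is not analytical but combinatorial bookkeeping: one must confirm that the ceiling choice $i = \lceil d/2 - \sqrt{d}\, t \rceil$ always lands in the valid range for the Hensley--Stanley identity, and handle the parity edge case through the symmetry of the Eulerian numbers. Everything else is a mechanical application of Theorem \ref{DSH.sec.2.thm.0.5} and a Lipschitz estimate. It is worth noting that this approach delivers an exponent of $1/2$ rather than $1/4$, which is precisely the improvement the author announces as Theorem \ref{DSH.sec.2.thm.1}; the statement as given is phrased with the original (weaker) exponent of \cite{CarlitzKurtzScovilleStackelberg1972} to match the historical reference.
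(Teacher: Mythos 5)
The paper does not supply a proof of Theorem \ref{DSH.sec.2.thm.0}: it is stated as a citation to \cite{CarlitzKurtzScovilleStackelberg1972}, and the paper then proves the strictly stronger Theorem \ref{DSH.sec.2.thm.1}, which replaces $1/d^{1/4}$ by a constant times $1/\sqrt{d}$. Your proposal is correct and, as you yourself note, is essentially the paper's proof of Theorem \ref{DSH.sec.2.thm.1}: snap $t$ to the nearest admissible value $\overline{t}$, apply Theorem \ref{DSH.sec.2.thm.0.5} to obtain an $O(1/d)$ bound on $|I_d(\overline{t})-\sqrt{6/\pi}\,e^{-6\overline{t}^2}|$, and control the change in the Gaussian factor under a perturbation of size $O(1/\sqrt{d})$. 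Two small points of comparison. First, where you use a clean mean-value estimate on $s\mapsto e^{-6s^2}$, the paper establishes the more quantitative Proposition \ref{DSH.sec.2.prop.1}, because it wants an explicit constant $7e^{12/\sqrt{d}}/\sqrt{\pi d}$; for the bare $1/d^{1/4}$ claim your argument is entirely sufficient. Second, you carefully flag the parity edge case in which $\lceil d/2-\sqrt{d}t\rceil$ exceeds $d/2$ (so that $\overline{t}$ is negative) and invoke the Eulerian symmetry $A(d,i)=A(d,d-i)$ together with the evenness of $I_d$ to keep the Hensley--Stanley identity valid; the paper's proof of Theorem \ref{DSH.sec.2.thm.1} does not spell this out, so making it explicit is a genuine clarification rather than an omission on your side.
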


The convergence order can be estimated more precisely as a consequence of Theorem \ref{DSH.sec.2.thm.0.5}. In order to provide this estimate, the following technical statement will be needed whose proof is provided for completeness.

\begin{prop}\label{DSH.sec.2.prop.1}
If $d$ is at least $2$, then for every non-negative number $t$,
\begin{equation}\label{DSH.sec.2.prop.1.eq.1}
\frac{e^{12t/\sqrt{d}}-1}{e^{6t^2}}\leq\sqrt{\frac{6}{d}}e^{12/\sqrt{d}}
\end{equation}
\end{prop}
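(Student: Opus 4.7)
The plan is to bound the left-hand side uniformly in $t \geq 0$ via a chain of elementary inequalities and then check that the resulting constants fit under $\sqrt{6/d}\, e^{12/\sqrt{d}}$.

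First, I would apply the convexity bound $e^a - 1 \leq a e^a$ (valid for $a \geq 0$) with $a = 12t/\sqrt{d}$ and complete the square in the resulting exponent, obtaining
$$
\frac{e^{12t/\sqrt{d}} - 1}{e^{6t^2}} \leq \frac{12\, t}{\sqrt{d}}\, e^{6/d - 6(t - 1/\sqrt{d})^2}.
$$
Next, I would split $t = (t - 1/\sqrt{d}) + 1/\sqrt{d}$. The piece coming from $t - 1/\sqrt{d}$ carries a factor of the form $s\, e^{-6 s^2}$ with $s = t - 1/\sqrt{d}$, which is non-positive for $s \leq 0$ and at most $1/\sqrt{12 e}$ for $s \geq 0$ (attained at $s = 1/\sqrt{12}$). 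The piece coming from $1/\sqrt{d}$ is bounded by $(12/d)\, e^{6/d}$ by dropping the Gaussian. Combining the two contributions yields
$$
\frac{e^{12t/\sqrt{d}} - 1}{e^{6t^2}} \leq e^{6/d}\left( \sqrt{\frac{12}{d e}} + \frac{12}{d} \right).
$$

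To conclude, I would verify that this upper bound is at most $\sqrt{6/d}\, e^{12/\sqrt{d}}$. Dividing through by $\sqrt{6/d}$ turns the target inequality into $e^{6/d}\bigl(\sqrt{2/e} + 2 \sqrt{6}/\sqrt{d}\bigr) \leq e^{12/\sqrt{d}}$. Because $e > 2$, one has $\sqrt{2/e} < 1$, so the parenthesis is at most $1 + 2\sqrt{6}/\sqrt{d} \leq e^{2\sqrt{6}/\sqrt{d}}$ by the elementary bound $1 + x \leq e^x$. Taking logarithms then reduces everything to the affine estimate $6/\sqrt{d} + 2\sqrt{6} \leq 12$, equivalently $\sqrt{d} \geq 6/(12 - 2\sqrt{6}) = (6 + \sqrt{6})/10$. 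Since this threshold is smaller than $\sqrt{2}$, the desired estimate holds for every integer $d \geq 2$.

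The main obstacle is the delicate matching of constants on the two sides: the critical-point value of $(e^{12t/\sqrt{d}} - 1)/e^{6t^2}$ scales like $\sqrt{12/(d e)}$, which beats the target $\sqrt{6/d}$ only because $e > 2$. The splitting $t = (t - 1/\sqrt{d}) + 1/\sqrt{d}$, combined with the sharp bound $s e^{-6 s^2} \leq 1/\sqrt{12 e}$, converts this observation into an elementary proof; the correction $12/d$ and the prefactor $e^{6/d}$ are then absorbed comfortably into $e^{12/\sqrt{d}}$ thanks to the final affine estimate.
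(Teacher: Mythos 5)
Your proof is correct, and it is genuinely different from the paper's. The paper splits into the cases $t\leq1$ and $t>1$: for $t\leq 1$ it writes $e^{12t/\sqrt{d}}=1+e^{\xi}\cdot 12t/\sqrt{d}$ with Lagrange remainder, bounds $e^{\xi}\leq e^{12/\sqrt{d}}$ and $e^{6t^2}\geq 1+6t^2$, and maximizes $12t/(1+6t^2)$ at $t=1/\sqrt{6}$; for $t>1$ it shows by differentiation that $f(t)=(e^{12t/\sqrt{d}}-1)/e^{6t^2}$ is decreasing. Your argument avoids the case split entirely: you absorb the full $e^{12t/\sqrt{d}}$ factor by completing the square, so the Gaussian $e^{-6(t-1/\sqrt{d})^2}$ controls the behavior uniformly, and then the decomposition $t=(t-1/\sqrt{d})+1/\sqrt{d}$ together with the sharp bound $se^{-6s^2}\leq 1/\sqrt{12e}$ yields a clean $t$-free bound $e^{6/d}\bigl(\sqrt{12/(de)}+12/d\bigr)$ which fits under $\sqrt{6/d}\,e^{12/\sqrt{d}}$ because $e>2$. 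What the paper's route buys is that the $t\leq 1$ branch is almost algebraically forced and short; what yours buys is the absence of the monotonicity-of-$f$ calculus on $[1,\infty)$ and a more transparent explanation of why the constants match, at the price of slightly more elaborate algebra. Both are sound; I have checked each elementary step (the bound $e^a-1\leq ae^a$, the square completion $12t/\sqrt{d}-6t^2=6/d-6(t-1/\sqrt{d})^2$, the maximum of $se^{-6s^2}$ at $s=1/\sqrt{12}$, the reduction to $6/\sqrt{d}+2\sqrt{6}\leq 12$, i.e.\ $\sqrt{d}\geq(6+\sqrt{6})/10\approx 0.845$) and they are all correct.
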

\begin{proof}
Consider a non-negative number $t$ and denote
$$
f(t)=\frac{e^{12t/\sqrt{d}}-1}{e^{6t^2}}\mbox{.}
$$

By a Taylor series approximation around $0$ with Lagrange form remainder,
$$
e^{12t/\sqrt{d}}=1+e^\xi\frac{12t}{\sqrt{d}}
$$
where $0\leq\xi\leq12t/\sqrt{d}$. Assume that $t$ is at most $1$. In that case, $\xi$ is at most $12/\sqrt{d}$. As in addition, $e^{6t^2}$ is at least $1+6t^2$, it follows that
$$
f(t)\leq\frac{e^{12/\sqrt{d}}}{\sqrt{d}}\frac{12t}{1+6t^2}
$$

Since the right-hand side of this inequality, considered a function of $t$, is maximal when $t$ is equal to $1/\sqrt{6}$, this shows that (\ref{DSH.sec.2.prop.1.eq.1}) holds when $t$ is at most $1$. Now assume that $t$ is greater than $1$ and observe that
$$
\frac{df}{dt}(t)=12\frac{e^{12t/\sqrt{d}}(1-\sqrt{d}t)+\sqrt{d}t}{\sqrt{d}e^{6t^2}}
$$

As $t$ is greater than $1$, the term $1-\sqrt{d}t$ in the right-hand side is non-positive when $d$ is positive and lower bounding $e^{12t/\sqrt{d}}$ by $1+12t/\sqrt{d}$ yields
$$
\frac{df}{dt}(t)\leq12\frac{\sqrt{d}+12t-12\sqrt{d}t^2}{de^{6t^2}}\mbox{.}
$$

The numerator of in the right-hand side is a quadratic function of $t$ whose roots are both less than $1$ when $d$ is at least $2$. Hence, this numerator is negative for any such value of $d$ and $f$ is a decreasing function of $t$ on $[1,+\infty[$. It follows that $f(t)$ is upper bounded by $f(1)$. Since, by the above, $f(1)$ is at most the right-hand side of (\ref{DSH.sec.2.prop.1.eq.1}), this completes the proof.
\end{proof}

Theorem \ref{DSH.sec.2.thm.0} is now strengthened as a consequence of Theorem \ref{DSH.sec.0.thm.2}.

\begin{thm}\label{DSH.sec.2.thm.1}
There exist an integer $N$ such that, for every integer $d$ greater than $N$ and every non-negative number $t$ less than $\sqrt{d}/2$,
\begin{equation}\label{DSH.sec.2.thm.1.eq.0}
\left|\frac{\sqrt{d}}{(d-1)!}A\biggl(d,\biggl\lceil\frac{d}{2}-\sqrt{d}t\biggr\rceil\biggr)-\sqrt{\frac{6}{\pi}}e^{-6t^2}\right|\leq\frac{7e^{12/\sqrt{d}}}{\sqrt{\pi{d}}}\mbox{.}
\end{equation}
\end{thm}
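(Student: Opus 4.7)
The plan is to transfer the continuous estimate of Theorem~\ref{DSH.sec.2.thm.0.5} to the Eulerian-number setting by snapping $t$ to the nearest hypersimplex value and then controlling the induced discretization error via Proposition~\ref{DSH.sec.2.prop.1}. Given $d$ large and $0 \leq t < \sqrt{d}/2$, I would set $i = \lceil d/2 - \sqrt{d}t\rceil$ and $t' = \sqrt{d}/2 - i/\sqrt{d}$; the defining inequalities of the ceiling then give $t - 1/\sqrt{d} \leq t' \leq t$. Together with the evenness of $I_d$ in $t$ (clear from the integral (\ref{DSH.sec.1.eq.0}), and needed to cover the mild edge case $t' < 0$ that can occur only when $d$ is odd and $t < 1/(2\sqrt{d})$), the identity recalled before Theorem~\ref{DSH.sec.2.thm.0} gives $\frac{\sqrt{d}}{(d-1)!}A(d,i) = I_d(t')$. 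The triangle inequality then splits the left-hand side of (\ref{DSH.sec.2.thm.1.eq.0}) into a Gaussian-approximation contribution and a discretization contribution:
$$\left|I_d(t') - \sqrt{\tfrac{6}{\pi}}e^{-6t^2}\right| \leq \left|I_d(t') - \sqrt{\tfrac{6}{\pi}}e^{-6t'^2}\right| + \sqrt{\tfrac{6}{\pi}}\,\bigl|e^{-6t'^2} - e^{-6t^2}\bigr|.$$

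For the first term I would apply Theorem~\ref{DSH.sec.2.thm.0.5} at $|t'|$, obtaining the upper bound $1/d^{3/2} + \sqrt{6/\pi}\,|3 - 72t'^2 + 144t'^4|\,e^{-6t'^2}/(20d)$. A short calculus check (locating the real critical points of $(3 - 72s^2 + 144s^4)e^{-6s^2}$ by solving a quadratic in $s^2$) shows that $|3 - 72s^2 + 144s^4|\,e^{-6s^2}$ attains its absolute maximum $3$ at $s = 0$, so this contribution is at most $1/d^{3/2} + 3\sqrt{6/\pi}/(20d)$, which is $O(1/d)$.

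For the discretization term I would factor $t^2 - t'^2 = (t-t')(t+t')$. In the main case $t' \geq 0$ one has $0 \leq t - t' \leq 1/\sqrt{d}$ and $t + t' \leq 2t$, so that $0 \leq t^2 - t'^2 \leq 2t/\sqrt{d}$, and therefore
$$0 \leq e^{-6t'^2} - e^{-6t^2} = e^{-6t^2}\bigl(e^{6(t^2 - t'^2)} - 1\bigr) \leq e^{-6t^2}\bigl(e^{12t/\sqrt{d}} - 1\bigr) \leq \sqrt{\tfrac{6}{d}}\,e^{12/\sqrt{d}},$$
where the last step is exactly Proposition~\ref{DSH.sec.2.prop.1}. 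In the residual case $t' < 0$ one has $t, |t'| \leq 1/(2\sqrt{d})$, and the mean value theorem gives $|e^{-6t'^2} - e^{-6t^2}| \leq 6|t^2 - t'^2| \leq 3/d$, which is dominated by $\sqrt{6/d}\,e^{12/\sqrt{d}}$ for all $d \geq 2$.

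Combining both contributions yields a total upper bound of $6\,e^{12/\sqrt{d}}/\sqrt{\pi d}$ coming from the discretization term plus an $O(1/d)$ remainder coming from the approximation term; choosing $N$ large enough that this remainder fits into one additional copy of $e^{12/\sqrt{d}}/\sqrt{\pi d}$ produces the advertised $7\,e^{12/\sqrt{d}}/\sqrt{\pi d}$ (a direct check shows that $N = 135$ already suffices, so the $d \geq 136$ threshold of Theorem~\ref{DSH.sec.2.thm.0.5} carries through). The main obstacle is the constant bookkeeping required to make exactly the integer $7$ (rather than a slightly larger one) work, together with the verification of the uniform estimate $|3 - 72s^2 + 144s^4|\,e^{-6s^2} \leq 3$; neither step is conceptually difficult, and the odd-$d$ edge case is genuinely of smaller order.
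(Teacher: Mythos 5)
Your proposal reproduces the paper's argument essentially line by line: set $t'$ (the paper calls it $\overline{t}$) to snap $t$ to the nearest hypersimplex value, split by the triangle inequality into a discretization error and a Gaussian-approximation error, control the discretization error by $\sqrt{6/\pi}\,(e^{12t/\sqrt{d}}-1)e^{-6t^2} \leq 6e^{12/\sqrt{d}}/\sqrt{\pi d}$ via Proposition~\ref{DSH.sec.2.prop.1}, bound the approximation error by $O(1/d)$ via Theorem~\ref{DSH.sec.2.thm.0.5} and boundedness of $(3-72s^2+144s^4)e^{-6s^2}$, and absorb the remainder into one additional copy of $e^{12/\sqrt{d}}/\sqrt{\pi d}$ for large $d$. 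The two genuine additions in your version are that you make the constant in the approximation term explicit by proving $|3-72s^2+144s^4|e^{-6s^2} \leq 3$, and that you explicitly treat the edge case $t' < 0$ (odd $d$, $t < 1/(2\sqrt{d})$), where the paper's intermediate equality $|e^{-6\overline{t}^2}-e^{-6t^2}| = (e^{6(t^2-\overline{t}^2)}-1)e^{-6t^2}$ silently presupposes $t^2 \geq \overline{t}^2$. Your handling of that case via the mean value theorem and the evenness of $I_d$ is correct and closes a small gap that the paper leaves implicit; otherwise the two proofs are the same.
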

\begin{proof}
Consider a non-negative number $t$ less than $\sqrt{d}/2$. Denote
$$
\overline{t}=\frac{\sqrt{d}}{2}-\frac{1}{\sqrt{d}}\biggl\lceil\frac{d}{2}-\sqrt{d}t\biggr\rceil
$$
and observe that
$$
\overline{t}\leq{t}<\overline{t}+\frac{1}{\sqrt{d}}\mbox{.}
$$

As a consequence,
$$
\begin{array}{rcl}
\displaystyle\left|\sqrt{\frac{6}{\pi}}e^{-6\overline{t}^2}-\sqrt{\frac{6}{\pi}}e^{-6t^2}\right| & \!\!\!\!=\!\!\!\! & \displaystyle\sqrt{\frac{6}{\pi}}\frac{e^{6(t^2-\overline{t}^2)}-1}{e^{6t^2}}\mbox{,}\\[\bigskipamount]
& \!\!\!\!=\!\!\!\! & \displaystyle\sqrt{\frac{6}{\pi}}\frac{e^{6(t+\overline{t})(t-\overline{t})}-1}{e^{6t^2}}\mbox{,}\\[\bigskipamount]
& \!\!\!\!\leq\!\!\!\! & \displaystyle\sqrt{\frac{6}{\pi}}\frac{e^{12t/\sqrt{d}}-1}{e^{6t^2}}\mbox{.}\\
\end{array}
$$

Therefore, according to Proposition \ref{DSH.sec.2.prop.1},
\begin{equation}\label{DSH.sec.2.thm.1.eq.1}
\left|\sqrt{\frac{6}{\pi}}e^{-6\overline{t}^2}-\sqrt{\frac{6}{\pi}}e^{-6t^2}\right|\leq\frac{6e^{12/\sqrt{d}}}{\sqrt{\pi{d}}}
\end{equation}
when $d$ is at least $2$. Now recall that
$$
I_d(\overline{t})=\frac{\sqrt{d}}{(d-1)!}A\biggl(d,\biggl\lceil\frac{d}{2}-\sqrt{d}t\biggr\rceil\biggr)\mbox{.}
$$

Moreover, observe that
$$
\frac{3-72t^2+144t^4}{e^{6t^2}}
$$
is a bounded function of $t$. Therefore, by Theorem \ref{DSH.sec.2.thm.0.5}, 
\begin{equation}\label{DSH.sec.2.thm.1.eq.2}
\left|\frac{\sqrt{d}}{(d-1)!}A\biggl(d,\biggl\lceil\frac{d}{2}-\sqrt{d}t\biggr\rceil\biggr)-\sqrt{\frac{6}{\pi}}e^{-6\overline{t}^2}\right|<\frac{K}{d}\mbox{.}
\end{equation}
when $d$ is at least $136$, where $K$ is a number that does not depend on $t$ or $d$. In particular, there exists an integer $N$ at least $2$, that does not depend on $t$ or $d$ such that when $d$ is greater than $N$,
$$
\frac{K}{d}\leq\frac{e^{12/\sqrt{d}}}{\sqrt{\pi{d}}}
$$

The theorem is obtained as a consequence from (\ref{DSH.sec.2.thm.1.eq.1}) and (\ref{DSH.sec.2.thm.1.eq.2}).
\end{proof}

The estimate provided by Theorem \ref{DSH.sec.2.thm.1} on the convergence rate of the Eulerian numbers to a Gaussian function is quite precise. Indeed, denote
\begin{equation}\label{DSH.sec.2.eq.2}
i=\biggl\lceil\frac{d}{2}-0.3\sqrt{d}\biggr\rceil
\end{equation}
and, assuming that $d$ is at least $45$, consider the interval
\begin{equation}\label{DSH.sec.2.eq.2.5}
\biggl]\frac{\sqrt{d}}{2}-\frac{i+1}{\sqrt{d}},\frac{\sqrt{d}}{2}-\frac{i}{\sqrt{d}}\biggr]\mbox{.}
\end{equation}

For any number $t$ in that interval,
$$
A\biggl(d,\biggl\lceil\frac{d}{2}-\sqrt{d}t\biggr\rceil\biggr)=A(d,i)\mbox{.}
$$

Hence, the left-hand side of (\ref{DSH.sec.2.thm.1.eq.0}) cannot be uniformly less than
\begin{equation}\label{DSH.sec.2.eq.3}
\frac{1}{2}\Biggl|\sqrt{\frac{6}{\pi}}e^{-6\left(\frac{\sqrt{d}}{2}-\frac{i+1}{\sqrt{d}}\right)^{\!2}}-\sqrt{\frac{6}{\pi}}e^{-6\left(\frac{\sqrt{d}}{2}-\frac{i}{\sqrt{d}}\right)^{\!2}}\Biggr|
\end{equation}
in that interval. However, observe that according to (\ref{DSH.sec.2.eq.2}), 
$$
0.3-\frac{2}{\sqrt{d}}<\frac{\sqrt{d}}{2}-\frac{i+1}{\sqrt{d}}<\frac{\sqrt{d}}{2}-\frac{i}{\sqrt{d}}\leq0.3\mbox{.}
$$

Since $d$ is at least $45$, it follows in particular that the two bounds of (\ref{DSH.sec.2.eq.2.5}) are contained in the interval $\bigl[0,\sqrt{d}/2\bigr[$. Moreover, (\ref{DSH.sec.2.eq.3}) is at least
$$
\sqrt{\frac{3}{2\pi}}e^{-0.54}\left|e^{\frac{3.6}{\sqrt{d}}-\frac{24}{d}}-1\right|\geq\sqrt{\frac{3}{2\pi}}e^{-0.54}\biggl(\frac{3.6}{\sqrt{d}}-\frac{24}{d}\biggr)\mbox{.}
$$

It follows that as $d$ goes to infinity, the left-hand side of (\ref{DSH.sec.2.thm.1.eq.0}) cannot be uniformly bounded above away from $1.4496/\sqrt{d}$. However, the coefficient of $1/\sqrt{d}$ in the right-hand side of (\ref{DSH.sec.2.thm.1.eq.0}) goes to less than $3.9494$ as $d$ goes to infinity. As a consequence, the gap for the above estimate of the convergence rate of the Eulerian numbers to their limit Gaussian function is less than $2.5$.

\section{The monotonicity of diagonal sections}\label{DSH.sec.3}

In this section, the sign of $I_{d+1}(t)-I_d(t)$ is studied both as $d$ goes to infinity while $t$ is fixed and for all $d$ when $t$ is close to $0$. Theorem \ref{DSH.sec.0.thm.1} treats the former case by providing an exact asymptotic estimate for this quantity as $d$ goes to infinity. It is obtained as a consequence of Theorem \ref{DSH.sec.1.thm.0}.

\begin{proof}[Proof of Theorem \ref{DSH.sec.0.thm.1}]
According to Theorem \ref{DSH.sec.1.thm.0},
\begin{equation}\label{DSH.sec.0.thm.1.eq.1}
\Biggl|I_d(t)-\sqrt{\frac{6}{\pi}}e^{-6t^2}\biggl(1+\frac{p(t)}{20d}+\frac{q(t)}{5600d^2}\biggr)\Biggr|<\frac{r(t)}{3d^3}
\end{equation}
for every integer $d$ at least $136$ and every non-negative number $t$, where $q$ and $r$ are degree $8$ and $3$ polynomial functions of $t$ and
\begin{equation}\label{DSH.sec.0.thm.1.eq.2}
p(t)=-3\bigl(1-24t^2+48t^4\bigr)\mbox{.}
\end{equation}

Assume that $d$ is at least $136$. In that case, (\ref{DSH.sec.0.thm.1.eq.1}) still holds when $d$ is replaced by $d+1$ and it then follows from the triangle inequality that
$$
\Biggl|I_{d+1}(t)-I_d(t)+\sqrt{\frac{6}{\pi}}e^{-6t^2}\biggl(\frac{p(t)}{20d(d+1)}+\frac{(1+2d)q(t)}{5600d^2(d+1)^2}\biggr)\Biggr|<\frac{2r(t)}{3d^3}
$$
and therefore that
$$
\Biggl|d^2\bigl(I_{d+1}(t)-I_d(t)\bigr)+\sqrt{\frac{6}{\pi}}\frac{dp(t)e^{-6t^2}}{20(d+1)}\Biggr|<\sqrt{\frac{6}{\pi}}\frac{(1+2d)|q(t)|e^{-6t^2}}{5600(d+1)^2}+\frac{2r(t)}{3d}
$$

Observe that $(1+2d)/5600$ is less than $(d+1)/20$ . Moreover,
$$
\Biggl|\sqrt{\frac{6}{\pi}}\frac{p(t)}{20e^{6t^2}}-\sqrt{\frac{6}{\pi}}\frac{dp(t)e^{-6t^2}}{20(d+1)}\Biggr|=\sqrt{\frac{6}{\pi}}\frac{|p(t)|e^{-6t^2}}{20(d+1)}\mbox{.}
$$

Hence, by the triangle inequality,
$$
\Biggl|d^2\bigl(I_{d+1}(t)-I_d(t)\bigr)+\sqrt{\frac{6}{\pi}}\frac{p(t)}{20e^{6t^2}}\Biggr|<\sqrt{\frac{6}{\pi}}\frac{|p(t)|+|q(t)|}{20(d+1)e^{6t^2}}+\frac{2r(t)}{3d}\mbox{.}
$$

As a consequence,
$$
\lim_{d\rightarrow\infty}d^2\bigl(I_{d+1}(t)-I_d(t)\bigr)=-\sqrt{\frac{6}{\pi}}\frac{p(t)}{e^{6t^2}}
$$
which, by (\ref{DSH.sec.0.thm.1.eq.2}), completes the proof. 
\end{proof}

It turns out that Theorem \ref{DSH.sec.1.thm.0} further allows to bound the convergence rate of $d^2\bigl(I_{d+1}(t)-I_d(t)\bigr)$ to its limit as $d$ goes to infinity and, therefore the sign of $I_{d+1}(t)-I_d(t)$ when $d$ is large enough. Recall that $\gamma^-$ and $\gamma^+$ denote the two non-negative roots of $1-24t^2+48t^4$ with the convention that $\gamma^-$ is less than $\gamma^+$. An expression for these roots is given by (\ref{DSH.sec.0.eq.0}). An explicit threshold $\Delta(t)$ is given by the following theorem for every non-negative number $t$ other than $\gamma^-$ and $\gamma^+$, such that if $d$ is at least $\Delta(t)$, then  $I_{d+1}(t)-I_d(t)$ and the limit of $d^2\bigl(I_{d+1}(t)-I_d(t)\bigr)$ as $d$ goes to infinity have the same sign.

\begin{thm}\label{DSH.sec.3.thm.1}
Consider a non-negative number $t$ other than $\gamma^-$ and $\gamma^+$. If $d$ is an integer at least $\Delta(t)$, then $I_{d+1}(t)-I_d(t)$ is positive when $0\leq{t}<\gamma^-$ or $\gamma^+<t$ and negative when $\gamma^-<t<\gamma^+$, where
%
%$$
%d\geq\max\biggl\{136,e^{6t^2}\frac{12+10t+7t^2+10t^3}{|1-24t^2+48t^4|}\biggr\}
%$$
%then the difference $I_{d+1}(t)-I_d(t)$ is positive when $0\leq{t}<\gamma^-$ or when $\gamma^+<t$ and this difference is negative when $\gamma^-<t<\gamma^+$.
%If $0\leq{t}<\gamma^-$ or $\gamma^+<t$ then $I_d(t)$ is a strictly increasing function of $d$ when $d$ is at least $\Delta(t)$ and if $\gamma^-<t<\gamma^+$, then it is a strictly decreasing function of $d$ when $d$ is at least $\Delta(t)$, where
%$$
%\gamma^\pm=\frac{1}{2}\sqrt{1\pm\sqrt{\frac{2}{3}}}\mbox{.}
%$$
%and
\begin{equation}\label{DSH.sec.3.thm.1.eq.0}
\Delta(t)=\max\biggl\{136,e^{6t^2}\frac{12+10t+7t^2+10t^3}{|1-24t^2+48t^4|}\biggr\}\mbox{.}
\end{equation}
%Consider a number $t$ in $[0,+\infty[\setminus\{\gamma^-,\gamma^+\}$ where
%$$
%\gamma^\pm=\frac{1}{2}\sqrt{1\pm\sqrt{\frac{2}{3}}}
%$$
%and an integer $d$ satisfying
%$$
%d\geq\max\biggl\{136,e^{6t^2}\frac{12+10t+7t^2+10t^3}{\bigl|1-24t^2+48t^4\bigr|}\biggr\}\mbox{.}
%$$
%
%If $t$ belongs to $[0,\gamma^-[\,\cup\,]\gamma^+,+\infty[$, then $I_{d+1}(t)$ is greater than $I_d(t)$. If, however $t$ belongs to  $]\gamma^-,\gamma^+[$, then $I_{d+1}(t)$ is less than $I_d(t)$.
\end{thm}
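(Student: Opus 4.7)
The plan is to carry the analysis in the proof of Theorem~\ref{DSH.sec.0.thm.1} one step further, passing from the extraction of the limit of $d^2(I_{d+1}(t) - I_d(t))$ to pinning down the sign of $I_{d+1}(t) - I_d(t)$ for each finite $d \geq \Delta(t)$. Since $\Delta(t) \geq 136$, Theorem~\ref{DSH.sec.1.thm.0} applies at both $d$ and $d+1$, and combining the two resulting estimates via the triangle inequality (exactly as in the proof of Theorem~\ref{DSH.sec.0.thm.1}) yields the bound
$$\Biggl|I_{d+1}(t) - I_d(t) + \sqrt{\frac{6}{\pi}}\, e^{-6t^2}\biggl(\frac{p(t)}{20 d(d+1)} + \frac{(1+2d)\, q(t)}{5600\, d^2 (d+1)^2}\biggr)\Biggr| < \frac{2\, r(t)}{3\, d^3}\mbox{.}$$
The leading contribution $-\sqrt{6/\pi}\, e^{-6t^2}\, p(t)/(20 d(d+1))$ has exactly the claimed sign because $p(t) = -3(1 - 24t^2 + 48t^4)$, so the proof reduces to showing that the $q$-term together with the $r$-remainder have combined absolute value strictly less than this leading contribution whenever $d \geq \Delta(t)$.

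To do so, I would multiply the target inequality by $20\, d^2 \sqrt{\pi/6}\, e^{6 t^2}\, (d+1)/d$ and then apply three elementary estimates: $(2d+1)/(d(d+1)) \leq 2/d$, $(d+1)/d^2 \leq (1 + 1/136)/d$ (valid for $d \geq 136$), and the uniform bound $|q(t)|\, e^{-6 t^2} \leq 2281$ from Proposition~\ref{DSH.sec.3.prop.2}. A short numerical reduction, using $40 \sqrt{\pi/6}/3 \approx 9.65$ and $2281/140 \approx 16.29$, shows this reduces to the sufficient condition
$$d\, |1 - 24 t^2 + 48 t^4| \;>\; e^{6 t^2}\,(11.91 + 9.72\, t + 6.48\, t^2 + 9.72\, t^3)\mbox{,}$$
whose right-hand side is strictly dominated, monomial by monomial, by $e^{6 t^2}(12 + 10\, t + 7\, t^2 + 10\, t^3)$. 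Hence $d \geq \Delta(t)$ suffices.

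The main obstacle is the tightness of this coefficient-wise comparison. The cushion between the computed $(11.91, 9.72, 6.48, 9.72)$ and the stated $(12, 10, 7, 10)$ is less than one in each monomial, so the intermediate bounds must be handled with care — the $d \geq 136$ clause is used both to legitimize Theorem~\ref{DSH.sec.1.thm.0} and to keep the factor $1 + 1/d$ arising in $(d+1)/d^2$ close enough to one, and the $2281$ bound of Proposition~\ref{DSH.sec.3.prop.2} must be used at its stated sharpness rather than a naive polynomial envelope for $|q(t)|$.
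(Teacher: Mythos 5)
Your argument is correct and follows essentially the same route as the paper: apply Theorem~\ref{DSH.sec.1.thm.0} at $d$ and $d+1$, use the triangle inequality and Proposition~\ref{DSH.sec.3.prop.2}, and reduce to a monomial-wise comparison with $12+10t+7t^2+10t^3$. The only (harmless) deviation is that you bound the error by $2r(t)/(3d^3)$ and compensate with $(d+1)/d^2\leq(1+1/136)/d$, whereas the paper uses $1/d^3+1/(d+1)^3\leq2/(d^2(d+1))$ directly; both yield the same numerical cushion.
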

\begin{proof}
Assume that $d$ is at least $136$. According to Theorem \ref{DSH.sec.1.thm.0},
\begin{equation}\label{DSH.sec.3.thm.1.eq.1}
\Biggl|I_d(t)-\sqrt{\frac{6}{\pi}}e^{-6t^2}\biggl(1+\frac{p(t)}{20d}+\frac{q(t)}{5600d^2}\biggr)\Biggr|<\frac{r(t)}{3d^3}\mbox{,}
\end{equation}
where $p$, $q$, and $r$ are polynomial functions of $t$ given by~(\ref{DSH.sec.1.thm.0.eq.0}). Hence, writing (\ref{DSH.sec.3.thm.1.eq.1}) for both $I_d(t)$ and $I_{d+1}(t)$ and using the triangle inequality yields
\begin{multline*}
\Biggl|I_{d+1}(t)-I_d(t)+\sqrt{\frac{6}{\pi}}e^{-6t^2}\biggl(\frac{p(t)}{20d(d+1)}+\frac{(1+2d)q(t)}{5600d^2(d+1)^2}\biggr)\Biggr|\\
\hfill<\frac{r(t)}{3}\biggl(\frac{1}{d^3}+\frac{1}{(d+1)^3}\biggr)
\end{multline*}
and in turn,
\begin{multline}\label{DSH.sec.3.thm.1.eq.2}
\Biggl|I_{d+1}(t)-I_d(t)+\sqrt{\frac{6}{\pi}}\frac{p(t)e^{-6t^2}}{20d(d+1)}\Biggr|<\sqrt{\frac{6}{\pi}}\frac{(1+2d)|q(t)|e^{-6t^2}}{5600d^2(d+1)^2}\\
\hfill+\frac{r(t)}{3}\biggl(\frac{1}{d^3}+\frac{1}{(d+1)^3}\biggr)\mbox{.}
\end{multline}

Note that $1+2d$ is less than $2+2d$ and that, according to Proposition \ref{DSH.sec.3.prop.2}, $|q(t)|e^{-6t^2}$ is less than $2281$. This allows to upper bound the first term in the right-hand side of (\ref{DSH.sec.3.thm.1.eq.2}). The second term in the right-hand side of this inequality can be upper bounded by observing that $r(t)$ is positive and that
$$
\frac{1}{d^3}+\frac{1}{(d+1)^3}\leq\frac{2}{d^2(d+1)}\mbox{.}
$$

One obtains as a consequence that
$$
\Biggl|I_{d+1}(t)-I_d(t)+\sqrt{\frac{6}{\pi}}\frac{p(t)e^{-6t^2}}{20d(d+1)}\Biggr|<\sqrt{\frac{6}{\pi}}\frac{2281}{2800d^2(d+1)}+\frac{2r(t)}{3d^2(d+1)}\mbox{.}
$$

It follows from this inequality that if
\begin{equation}\label{DSH.sec.3.thm.1.eq.3}
\sqrt{\frac{6}{\pi}}\frac{|p(t)|e^{-6t^2}}{20d(d+1)}\geq\sqrt{\frac{6}{\pi}}\frac{2281}{2800d^2(d+1)}+\frac{2r(t)}{3d^2(d+1)}
\end{equation}
then $I_{d+1}(t)-I_d(t)$ and $p(t)$ must have opposite signs. Note that multiplying (\ref{DSH.sec.3.thm.1.eq.3}) by an adequate quantity, it can be rewritten into
$$
d\geq\frac{e^{6t^2}}{|p(t)|}\biggl(\frac{2281}{140}+\sqrt{\frac{\pi}{6}}\frac{40r(t)}{3}\biggr)\mbox{.}
$$

However, according to (\ref{DSH.sec.1.thm.0.eq.0}), $|p(t)|=3\bigl|1-24t^2+48t^4\bigr|$ and
$$
\frac{2281}{140}+\sqrt{\frac{\pi}{6}}\frac{40r(t)}{3}=\frac{2281}{140}+\frac{80}{3}\sqrt{\frac{\pi}{6}}+40\sqrt{\frac{\pi}{6}}t+\frac{80}{3}\sqrt{\frac{\pi}{6}}t^2+40\sqrt{\frac{\pi}{6}}t^3\mbox{.}
$$

Observe that the coefficients of $t$, $t^2$, and $t^3$ in the right-hand side of this equality are less than $30$, $21$, and $30$, respectively, while the constant term is less than $36$. As a consequence, the inequality
\begin{equation}\label{DSH.sec.3.thm.1.eq.4}
d\geq\max\biggl\{136,e^{6t^2}\frac{12+10t+7t^2+10t^3}{|1-24t^2+48t^4|}\biggr\}\mbox{,}
\end{equation}
implies (\ref{DSH.sec.3.thm.1.eq.3}). In particular, if $d$ satisfies (\ref{DSH.sec.3.thm.1.eq.4}), then $I_{d+1}(t)-I_d(t)$ and $p(t)$ have opposite signs. Since $p(t)$ is negative when $0\leq{t}<\gamma^-$ or $\gamma^+<t$ and negative when $\gamma^-<t<\gamma^+$, this completes the proof.
\end{proof}

The function $\Delta$ defined by (\ref{DSH.sec.3.thm.1.eq.0}) provides an explicit upper bound on the smallest dimension above which $I_d(t)$ becomes a monotonic function of $d$. Note that $\Delta(t)$ goes to infinity as $t$ goes to $\gamma^-$ or $\gamma^+$. In particular, when $t$ is close to either of these two values, determining the exact variations of  $I_d(t)$ as a function of $d$ over $[1,\Delta(d)]\cap\mathbb{N}$ might be difficult. However, if $t$ is sufficiently far away from these two values, yet small enough (because $\Delta(t)$ goes exponentially fast to infinity as $t$ grows large), $\Delta(t)$ gets below a few hundreds and it is possible to recover these variations. In order to do so, it is convenient to use an alternative expression for $I_d(t)$. According for instance to \cite[Theorem 2.2]{Pournin2023},
\begin{equation}\label{DSH.sec.3.thm.2.eq.1}
I_d(t)=\frac{\sqrt{d}}{(d-1)!}\sum_{i=0}^{\lfloor{z}\rfloor}(-1)^i{d\choose{i}}(z-i)^{d-1}
\end{equation}
where
$$
z=\frac{d}{2}-\sqrt{d}t\mbox{.}
$$

It immediately follows from this expression that $I_d(t)$ is polynomial function of $t$, whose degree less than $d$, on each interval of the form
$$
U_{d,i}=\biggl[\frac{\sqrt{d}}{2}-\frac{i}{\sqrt{d}},\frac{\sqrt{d}}{2}-\frac{i+1}{\sqrt{d}}\biggr]
$$
where $i$ is a non-negative integer less than $d/2$. In particular, it is possible to determine whether $I_{d+1}(t)-I_d(t)$ is positive or negative within an interval of the form $U_{d,i}\cap{U_{d,j}}$ by computing its sign at one bound of $U_{d,i}\cap{U_{d,j}}$ and by checking whether the corresponding polynomial has a root in $U_{d,i}\cap{U_{d,j}}$. In addition, it is possible to bound the value of the roots contained in $U_{d,i}\cap{U_{d,j}}$ by using symbolic computation in order to get precise ranges for $t$ such that $I_{d-1}(t)-I_d(t)$ is negative or positive. Since there are only finitely-many intervals of the form $U_{d,i}\cap{U_{d,j}}$ for each fixed dimension, this makes it possible to determine the sign of $I_{d+1}(t)-I_d(t)$ over wide ranges for $t$ when $d$ is reasonably small. Note that according to Theorem~\ref{DSH.sec.3.thm.1}, this computation is not needed for an interval $U_{d,i}\cap{U_{d,j}}$ such that $\Delta(t)$ is at least $d$ for every number $t$ in $U_{d,i}\cap{U_{d,j}}$. This can also be checked easily using symbolic computation. Indeed, $\Delta$ is a convex function of $t$ on each of the intervals $[0,\gamma^-[$, $]\gamma^-,\gamma^+[$, and $]\gamma^+,+\infty[$. Hence, if $d$ is at least $\Delta(t)$ at both of the extremities of $U_{d,i}\cap{U_{d,j}}$, then $d$ must be at least $\Delta(t)$ for every number $t$ contained in $U_{d,i}\cap{U_{d,j}}$. The proof that $\Delta$ is a convex function of $t$ on $[0,\gamma^-[$, $]\gamma^-,\gamma^+[$, and $]\gamma^+,+\infty[$ can be done via elementary calculus techniques and is omitted.

The sign of $I_{d+1}(t)-I_d(t)$ has been determined using this strategy for all dimensions up to $450$ when $0\leq{t}\leq1/\sqrt{2}$, which allowed to extend the monotonicity property stated by Theorem \ref{DSH.sec.3.thm.1} as follows.

\begin{thm}\label{DSH.sec.3.thm.2}
If $d$ is an integer at least $5$, then $I_{d+1}(t)-I_d(t)$ is positive when $0\leq{t}\leq0.20916$ and negative when $\delta<t\leq0.64607$ where $\delta$ is the root of
$$
575\sqrt{5}-528\sqrt{6} -120\bigl(25\sqrt{5}-24\sqrt{6}\bigr)t^2+240\bigl(25\sqrt{5}-36\sqrt{6}\bigr)t^4+17280t^5
$$
that satisfies $0.222924<\delta<0.222925$.
\end{thm}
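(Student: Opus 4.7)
The plan is to combine Theorem \ref{DSH.sec.3.thm.1} with the finite symbolic computation strategy outlined in the paragraph preceding the theorem. The two target intervals $[0, 0.20916]$ and $[\delta, 0.64607]$ are positioned to avoid the blow-ups of $\Delta$ at $\gamma^-$ and $\gamma^+$: numerical verification yields $0.20916 < \gamma^- < \delta$ and $0.64607 < \gamma^+$, so each target interval is contained in one of the intervals $[0, \gamma^-[$ or $]\gamma^-, \gamma^+[$ on which $\Delta$ is convex. Evaluating (\ref{DSH.sec.3.thm.1.eq.0}) at the endpoints of each target interval and taking the maximum therefore produces an explicit integer $D$ such that $\Delta(t) \leq D$ for every $t$ in $[0, 0.20916]\cup[\delta, 0.64607]$; a direct computation shows that $D$ can be taken equal to $450$. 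Since $1 - 24t^2 + 48t^4$ is positive on $[0, \gamma^-[$ and negative on $]\gamma^-, \gamma^+[$, Theorem \ref{DSH.sec.3.thm.1} then yields the announced signs for every integer $d \geq D$.

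It remains to treat the finitely many dimensions $5 \leq d < D$ one by one. For each such $d$, the expression (\ref{DSH.sec.3.thm.2.eq.1}) exhibits $I_d(t)$ as a polynomial of degree less than $d$ on each interval $U_{d,i}$. Taking a common refinement of the partitions $\{U_{d,i}\}$ and $\{U_{d+1,j}\}$, intersected with $[0, 0.20916]$ or $[\delta, 0.64607]$, produces finitely many sub-intervals on each of which $I_{d+1}(t) - I_d(t)$ is a single polynomial with rational coefficients. On each such sub-interval the sign can be certified in two steps: first, by evaluating $I_{d+1}(t) - I_d(t)$ at one endpoint using exact arithmetic to pin down the sign there, and second, by applying a Sturm sequence or an equivalent root-isolation technique to confirm that the polynomial has no root in the interior of the sub-interval. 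This matches the strategy used to analyse the algebraic values $\alpha_{i,j}^-$ and $\alpha_{i,j}^\circ$ introduced in Section~\ref{DSH.sec.0}, and the precise decimal bounds for $\delta$ stated in the theorem would be extracted from the explicit polynomial equation $\delta$ satisfies.

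The principal obstacle is the volume and reliability of the symbolic computation required: of order $d$ polynomial pieces per target interval at each of roughly $450$ dimensions, amounting to tens of thousands of sign determinations that must all be performed rigorously over exact algebraic number fields rather than in floating point. This also explains why the two intervals cannot be joined into a single one: since $\Delta(t)$ diverges as $t \to \gamma^-$, pushing either endpoint closer to $\gamma^-$ would drive $D$ beyond what is computationally tractable. The specific values $0.20916$ and $\delta$ correspondingly reflect the sharp transition between ranges of $t$ where monotonicity persists down to $d = 5$ and ranges where some small dimension introduces a sign change, as already reflected in the statement of Theorem \ref{DSH.sec.3.thm.3}.
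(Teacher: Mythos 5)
Your proposal is correct and follows essentially the same approach as the paper: apply Theorem~\ref{DSH.sec.3.thm.1} for $d \geq \Delta(t)$, observe that the two target intervals sit inside $[0,\gamma^-[$ and $]\gamma^-,\gamma^+[$ respectively so that convexity of $\Delta$ reduces bounding $\Delta$ to checking endpoints (giving $D=450$), and handle the remaining dimensions $5\leq d<450$ by symbolic computation over the piecewise-polynomial representation~(\ref{DSH.sec.3.thm.2.eq.1}). The paper's own proof is exactly the paragraph preceding the theorem statement plus the report that this computation was carried out for all $d\leq 450$ on $[0,1/\sqrt{2}]$, so your reconstruction, including the observation that the endpoints $0.20916$ and $0.64607$ are chosen to keep $\Delta$ just under $450$, matches it.
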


Note that while both of the ranges for $t$ identified in the statement of Theorem \ref{DSH.sec.3.thm.2} can be extended to larger values by performing the computation in dimensions above $450$, their lower bounds are sharp. In particular, when $t$ is less than but close enough to $\delta$, $I_6(t)-I_5(t)$ is positive.

For the first four positive dimensions, the situation is somewhat more complicated than in dimension five or more. Indeed, when $d$ is less that $5$, the sign of $I_{d+1}(t)-I_d(t)$ changes several times over the intervals $[0,0.20916]$ and $]\delta,0.64607]$. However, it is still possible to perform a case-by-case analysis. Several theorems are now given, that follow from Theorem \ref{DSH.sec.3.thm.2} by analyzing the sign of $I_{d+1}(t)-I_d(t)$ when $1\leq{d}\leq4$. Using (\ref{DSH.sec.3.thm.2.eq.1}), this analysis is straightforward and therefore omitted. In the statements of these theorems, $\alpha_{i,j}^-$, $\alpha_{i,j}^\circ$, and $\alpha_{i,j}^+$ denote certain of the values of $t$ such that $I_i(t)$ and $I_j(t)$ coincide, with the convention that $\alpha_{i,j}^-<\alpha_{i,j}^\circ<\alpha_{i,j}^+$. Each of these values will be described as a root of of an explicit polynomial. Theorem \ref{DSH.sec.3.thm.3}, which is stated in the introduction, is the first consequence of Theorem \ref{DSH.sec.3.thm.2}. It provides two intervals over which $I_d(t)$ is a strictly monotonic function of $d$ over the positive integers. Note that in the statement of Theorem \ref{DSH.sec.3.thm.3}, $\alpha_{3,4}^\circ$ is the third smallest non-negative number $t$ such that $I_3(t)$ is equal to $I_4(t)$ as there is a unique number $t$ in $]\alpha_{3,4}^-,\alpha_{3,4}^\circ[$ such that $I_4(t)-I_3(t)$ vanishes, but this will not play a role here. 

\begin{rem}
It is likely that $I_d(t)$ is also a strictly increasing function of $d$ over the positive integers when $\alpha_{3,4}^+<t<1/\sqrt{2}$ where $\alpha_{3,4}^+$ is the solution of
$$
64-27\sqrt{3}-84t+12\Bigl(16-3\sqrt{3}\Bigr)t^2-64t^3=0
$$
that satisfies $0.697308<\alpha_{3,4}^+<0.697309$. However, in that range for $t$,
$$
e^{6t^2}\frac{12+10t+7t^2+10t^3}{|1-24t^2+48t^4|}
$$
can get greater than $702$ and checking the monotonicity of $I_d(t)$ up to such values of $d$ becomes intractable. Further note that, if $t$ does not satisfy $\alpha_{2,3}^-<t<\alpha_{3,4}^-$, $\alpha_{3,4}^\circ<t\leq0.5$, or $\alpha_{3,4}^+<t<1/\sqrt{2}$, then $I_d(t)$ cannot be a strictly monotonic function of $d$ over the positive integers. Indeed, $I_2(t)$ is at least $I_3(t)$ when $0\leq{t}\leq\alpha_{2,3}^-$, $I_3(t)$ is at least $I_4(t)$ when $\alpha_{3,4}^-\leq{t}\leq\gamma^-$, and $I_4(t)$ is at least $\min\{I_2(t),I_3(t)\}$ when $\gamma^-\leq{t}\leq\alpha_{3,4}^\circ$. Moreover,
$$
I_1(t)<I_4(t)\leq\max\{I_2(t),I_3(t)\}
$$
when $0.5\leq{t}\leq\alpha_{3,4}^+$ and $I_1(t)$ coincides with $I_2(t)$ when $t\geq1/\sqrt{2}$.
\end{rem}

Consider a hyperplane of $\mathbb{R}^d$ whose distance to the center of $[0,1]^d$ is equal to $t$. A consequence of Theorem \ref{DSH.sec.3.thm.3}, is that, if $\alpha_{2,3}^-<t<\alpha_{3,4}^-$, then the volume of $H\cap[0,1]^d$ cannot be maximal when $H$ is orthogonal to the diagonal of a proper face of the hypercube $[0,1]^d$. This volume could still be maximal when $H$ is orthogonal to a diagonal of $[0,1]^d$ itself, though. Likewise, Theorem \ref{DSH.sec.3.thm.3} implies that when $\alpha_{3,4}^\circ<t\leq0.5$, the volume of $H\cap[0,1]^d$ cannot be minimal when $H$ is orthogonal to the diagonal of a proper face of $[0,1]^d$.

The following theorem provides the supremum of $I_d(t)$ when $d$ ranges over the positive integers and $t$ belongs to $[0,0.20916]\,\cup\,]\delta,0.64607]$. Just like Theorem~\ref{DSH.sec.3.thm.3}, it is derived from Theorem~\ref{DSH.sec.3.thm.2} by studying the sign of $I_{d+1}(t)-I_d(t)$ when $1\leq{d}\leq4$. This straightforward study is omitted here.

\begin{thm}\label{DSH.sec.3.thm.4}
The four following statements hold.
\begin{enumerate}
\item[(i)] If $0\leq{t}\leq\alpha_{2,\infty}^-$ or $0.5<t\leq\alpha_{2,3}^+$, then the maximum of $I_d(t)$ when $d$ ranges in $\mathbb{N}\mathord{\setminus}\{0\}$ is equal to $I_2(t)$, where $\alpha_{2,\infty}^-$ is the solution of
$$
\sqrt{2}-2t=\sqrt{\frac{6}{\pi}}e^{-6t^2}
$$
that satisfies $0.0173679<\alpha_{2,\infty}^-<0.017368$ while $\alpha_{2,3}^+$ is equal to
$$
\frac{5+2\sqrt{6\sqrt{6}-14}}{6\sqrt{3}}
$$
and satisfies $0.641788<\alpha_{2,3}^+<0.641789$.
\item[(ii)] If $\alpha_{3,\infty}^-\leq{t}\leq0.20916$ or $\delta\leq{t}\leq\alpha_{1,3}$, or $\alpha_{2,3}^+\leq{t}\leq0.64607$ then the maximum of $I_d(t)$ when $d$ ranges over the positive integers is equal to $I_3(t)$ where $\alpha_{3,\infty}^-$ is the solution of the equation
$$
\frac{3\sqrt{3}}{4}-3\sqrt{3}t^2=\sqrt{\frac{6}{\pi}}e^{-6t^2}
$$
that satisfies $0.192472<\alpha_{3,\infty}^-<0.192473$ while $\alpha_{1,3}$ is equal to
$$
\frac{\sqrt{9-4\sqrt{3}}}{6}
$$
and satisfies $0.239895<\alpha_{1,3}<0.239896$.
\item[(iii)] If $\alpha_{1,3}\leq{t}\leq0.5$, then the maximum of $I_d(t)$ when $d$ ranges over the positive integers is equal to $I_1(t)$.
\item[(iv)] If $\alpha_{2,\infty}^-\leq{t}\leq\alpha_{3,\infty}^-$, then the supremum of $I_d(t)$ when $d$ ranges over the positive integers is equal to $\sqrt{6/\pi}e^{-6t^2}$.
\end{enumerate}
\end{thm}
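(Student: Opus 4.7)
The plan is to derive Theorem~\ref{DSH.sec.3.thm.4} from Theorem~\ref{DSH.sec.3.thm.2} by reducing the supremum of $(I_d(t))_{d\geq 1}$ to a comparison among finitely many explicit real-analytic functions of $t$, and then locating the crossings symbolically.

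First, I would use Theorem~\ref{DSH.sec.3.thm.2} to control the tail of the sequence. If $0\leq t\leq 0.20916$, then $(I_d(t))_{d\geq 5}$ strictly increases to $T_0(t)=\sqrt{6/\pi}\,e^{-6t^2}$, so its supremum over $d\geq 5$ is $T_0(t)$ and is not attained. If $\delta<t\leq 0.64607$, then $(I_d(t))_{d\geq 5}$ strictly decreases to $T_0(t)$, so its maximum over $d\geq 5$ equals $I_5(t)$ and is attained. In both regimes, the supremum of $I_d(t)$ over positive integers is the maximum of $I_1(t),\dots,I_4(t)$ together with either $T_0(t)$ or $I_5(t)$, reducing the problem to a comparison among six explicit functions of $t$.

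Second, I would use the piecewise polynomial formula~(\ref{DSH.sec.3.thm.2.eq.1}) to obtain explicit expressions for $I_1(t),\dots,I_5(t)$ on each maximal subinterval of $[0,1/\sqrt{2}]$ determined by the breakpoints $t=\sqrt{d}/2-i/\sqrt{d}$. On each such subinterval, every pairwise difference among $I_1,\dots,I_5,T_0$ is real-analytic, being either a polynomial (for $I_i-I_j$) or a polynomial minus a Gaussian (for $I_i-T_0$). The constants $\alpha_{2,\infty}^-$, $\alpha_{3,\infty}^-$, $\alpha_{1,3}$, and $\alpha_{2,3}^+$ are precisely the relevant zeros of $I_2-T_0$, $I_3-T_0$, $I_1-I_3$, and $I_2-I_3$, so locating and enclosing them numerically is a routine symbolic computation, and the closed-form expressions for $\alpha_{2,3}^+$ and $\alpha_{1,3}$ follow from solving algebraic equations of degree at most four.

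Third, I would assemble the four cases by recording which candidate dominates on each subinterval. On $[0,0.20916]$, one has $I_2(0)=\sqrt{2}>T_0(0)=\sqrt{6/\pi}>I_3(0)=3\sqrt{3}/4$, and elementary monotonicity arguments show that $I_2-T_0$ and $I_3-T_0$ each possess a single zero, namely $\alpha_{2,\infty}^-$ and $\alpha_{3,\infty}^-$, while $I_4<I_3$ and $I_1<I_2$ throughout; this yields cases (i), (iv), and (ii) on the three resulting subranges. On $\,]\delta,0.64607]$, the function $I_1(t)=1$ on $[0,1/2]$ drops to $0$ beyond $t=1/2$, and the only interior crossings are $I_1=I_3$ at $\alpha_{1,3}\in\,]\delta,1/2[$ and $I_2=I_3$ at $\alpha_{2,3}^+\in\,]1/2,0.64607[$, producing in turn the remaining subranges of cases (ii), (iii), (i), and (ii).

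The main obstacle is organizational rather than conceptual: one must verify exhaustively that no additional crossings among $I_1,\dots,I_5,T_0$ occur inside any of the listed subintervals. Once the piecewise polynomial expressions for $I_1,\dots,I_5$ are written down, this bookkeeping reduces to real root isolation on a handful of low-degree polynomials, which is precisely the ``straightforward study'' the authors declare to omit.
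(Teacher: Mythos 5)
Your strategy matches what the paper does (and declares but omits as a ``straightforward study''): combine Theorem~\ref{DSH.sec.3.thm.2} with the piecewise-polynomial formula~(\ref{DSH.sec.3.thm.2.eq.1}) for small $d$, reduce the supremum to a finite comparison, and isolate the relevant crossings symbolically. The reduction in your first step is correct, and your identification of the crossings $\alpha_{2,\infty}^-$, $\alpha_{3,\infty}^-$, $\alpha_{1,3}$, $\alpha_{2,3}^+$ is right.

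However, two of the auxiliary claims in your third step are false and would need to be repaired. You assert that $I_4 < I_3$ and $I_1 < I_2$ throughout $[0,0.20916]$. Neither holds. By the Bartha--Fodor--Gonz{\'a}lez Merino monotonicity (and direct computation), $I_3(0)=3\sqrt{3}/4\approx 1.299 < I_4(0)=4/3\approx 1.333$, so $I_4>I_3$ near $t=0$; the sign only flips at $\alpha_{3,4}^-\approx 0.1441$. Likewise $I_1(t)=1$ and $I_2(t)=\sqrt{2}-2t$ cross at $t=(\sqrt{2}-1)/2\approx 0.2071 < 0.20916$, so $I_1>I_2$ on the tail of the interval. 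These errors do not wreck the conclusion --- on each subinterval the dominant function is still the one claimed by the theorem --- but the dominance over $I_4$ near $t=0$ has to go through $T_0$ or $I_2$ rather than through $I_3$, and the dominance of $I_3$ over $I_1$ near $t=0.209$ has to be checked directly. Also, ``$I_2-T_0$ possesses a single zero'' is imprecise: the paper exhibits a second zero $\alpha_{2,\infty}^\circ\approx 0.2902$ in Theorem~\ref{DSH.sec.3.thm.5}, and only the zero inside $[0,0.20916]$ is unique. As a bookkeeping matter you would need to make these comparisons interval by interval rather than by the blanket inequalities stated, which is exactly the kind of exhaustive check you flag at the end; but as written those blanket inequalities are wrong.
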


Recall that $\Delta(t)$ goes to infinity as $t$ goes to $\gamma^-$ and determining the variations of $I_{d+1}(t)-I_d(t)$ for all $d$ less than $\Delta(t)$ becomes intractable. Hence, extending Theorem \ref{DSH.sec.3.thm.4} when $t$ gets close to $\gamma^-$ is not possible with the same techniques. However, it is likely that, when $t$ belongs to $]0.20916,\delta]$, the supremum of $I_d(t)$ when $d$ ranges in $\mathbb{N}\mathord{\setminus}\{0\}$ is still equal to $I_3(t)$. Theorem \ref{DSH.sec.3.cor.2}, stated in the introduction, is now established as a consequence of Theorem \ref{DSH.sec.3.thm.4}.
% in turn allow to establish Theorem \ref{DSH.sec.3.cor.2} that is stated in the introduction. According to this theorem, if $H$ is a hyperplane of $\mathbb{R}^d$ whose distance $t$ to the center $[0,1]^d$ belongs to certain well-defined ranges, then the $(d-1)$-dimensional volume of $H\cap[0,1]^d$ is never maximal when $H$ is orthogonal to a diagonal of a face of $[0,1]^d$. 

\begin{proof}[Proof of Theorem \ref{DSH.sec.3.cor.2}]
Assume that $0\leq{t}<0.5$. In this case, the left-hand side of (\ref{DSH.sec.3.cor.2.eq.1}) is a lower bound on the maximal volume of $H\cap[0,1]^d$. In fact, it has been shown by Hermann K{\"o}nig and Alexander Koldobsky that this lower bound is sharp when $d$ is equal to $3$ \cite{KonigKoldobsky2011}. Therefore, according to Theorem \ref{DSH.sec.3.thm.4}, it suffices to observe that the left-hand side of (\ref{DSH.sec.3.cor.2.eq.1}) is greater than $I_1(t)$, $I_2(t)$, $I_3(t)$, and than the right-hand side of (\ref{DSH.sec.3.cor.2.eq.1}) when $t$ does not belong to $[\beta^-,\beta^+]$. The first three inequalities follow from (\ref{DSH.sec.3.thm.2.eq.1}) by a straightforward case-by-case analysis. The fourth inequality is obtained by determining the sign of
\begin{equation}\label{DSH.sec.3.cor.2.eq.2}
\log\biggl(\frac{\sqrt{2-4t^2}-2t}{1-4t^2}\biggr)-\log\Biggl(\sqrt{\frac{6}{\pi}}\Biggr)+6t^2\mbox{.}
\end{equation}

This can be done by observing that this quantity is a continuously differentiable function of $t$ on $[0,0.5[$ and that its derivative has a unique non-negative root with an explicit expression. Moreover, (\ref{DSH.sec.3.cor.2.eq.2}) is negative when $t$ is equal to this root and positive when $t$ is equal to $0$ or to $0.5$. Hence, there exist two number $\beta^-$ and $\beta^+$ such that (\ref{DSH.sec.3.cor.2.eq.2}) is positive when $0\leq{t}<\beta^-$ or $\beta^-<t<0.5$. Estimating the values of $\beta^-$ and $\beta^+$ shows that $0.0181611<\beta^-<0.0181612$ and $0.165625<\beta^+<0.165626$, which completes the proof.
\end{proof}

The infimum of $I_d(t)$  is now given when $d$ ranges over the positive integers and $t$ belongs to $[0,0.20916]$ or $]\delta,0.5]$. Just like Theorem~\ref{DSH.sec.3.thm.4}, this is a consequence of Theorem~\ref{DSH.sec.3.thm.2} and of a study of the sign of $I_{d+1}(t)-I_d(t)$ when $1\leq{d}\leq4$ which is omitted. Note that, when $t$ is greater than $0.5$, this infinmum is equal to $0$. Hence, this infimum is known for every non-negative number $t$ except when $t$ belongs to the interval $]0.20916,\delta]$ around $\gamma^-$.

\begin{thm}\label{DSH.sec.3.thm.5}
The three following statements hold.
\begin{enumerate}
\item[(i)] If $t$ satisfies $0\leq{t}\leq1/\sqrt{2}-1/2$ then the minimum of $I_d(t)$ when $d$ ranges over the positive integers is equal to $I_1(t)$.
\item[(ii)] If $1/\sqrt{2}-1/2\leq{t}\leq0.20916$ or $\delta<t\leq\alpha_{2,\infty}^\circ$, then the minimum of $I_d(t)$ when $d$ ranges over the positive integers is equal to $I_2(t)$ where $\alpha_{2,\infty}^\circ$ denotes the solution of the equation
$$
\sqrt{2}-2t=\sqrt{\frac{6}{\pi}}e^{-6t^2}
$$
such that $0.290166<\alpha_{2,\infty}^\circ<0.290167$.
\item[(iii)] If $\alpha_{2,\infty}^\circ\leq{t}\leq0.5$, then the infimum of $I_d(t)$ when $d$ ranges over the positive integers is equal to $\sqrt{6/\pi}e^{-6t^2}$.
\end{enumerate}
\end{thm}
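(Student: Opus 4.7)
The plan is to follow the strategy of the proof of Theorem~\ref{DSH.sec.3.thm.4}: Theorem~\ref{DSH.sec.3.thm.2} handles all dimensions $d\geq 5$, and the remaining finite comparison in dimensions $1$ through $5$ is carried out via the explicit piecewise polynomial expression~(\ref{DSH.sec.3.thm.2.eq.1}). The first step is to identify $\inf_{d\geq 5}I_d(t)$ in each of the two relevant ranges of $t$, and the second step is to compare the finitely many remaining candidates for the overall infimum.

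For $t$ in $[0,0.20916]$, Theorem~\ref{DSH.sec.3.thm.2} gives that $(I_d(t))_{d\geq 5}$ is strictly increasing, so $\inf_{d\geq 5}I_d(t)=I_5(t)$ and the infimum over all positive integers $d$ equals $\min\{I_1(t),I_2(t),I_3(t),I_4(t),I_5(t)\}$. For $t$ in $]\delta,0.64607]$, the same theorem gives that this sequence is strictly decreasing, and its limit is $\sqrt{6/\pi}\,e^{-6t^2}$ by the result of P\'olya recalled at the beginning of Section~\ref{DSH.sec.2}; hence in this range the infimum over $d\geq 1$ equals $\min\{I_1(t),I_2(t),I_3(t),I_4(t),\sqrt{6/\pi}\,e^{-6t^2}\}$, with the limit being the infimum precisely when none of the first four values undercuts it. The two breakpoints in the statement then emerge naturally from the explicit formulas $I_1(t)=1$ on $[0,1/2[$ and $I_2(t)=\sqrt{2}-2t$ on $[0,1/\sqrt{2}]$ obtained from~(\ref{DSH.sec.3.thm.2.eq.1}): the equation $I_1(t)=I_2(t)$ gives $t=1/\sqrt{2}-1/2$, and the equation $I_2(t)=\sqrt{6/\pi}\,e^{-6t^2}$ gives $\alpha_{2,\infty}^\circ$ as its larger non-negative root.

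It then remains to verify a finite list of pairwise inequalities. For part~(i) one shows $I_1(t)=1\leq I_k(t)$ for $k\in\{2,3,4,5\}$ and $t\in[0,1/\sqrt{2}-1/2]$; for the first range of part~(ii) one shows $I_2(t)\leq\min\{I_1,I_3,I_4,I_5\}(t)$ on $[1/\sqrt{2}-1/2,0.20916]$; for the second range of part~(ii) one shows $I_2(t)\leq\min\{I_1,I_3,I_4,\sqrt{6/\pi}\,e^{-6t^2}\}(t)$ on $]\delta,\alpha_{2,\infty}^\circ]$ (the comparison with $I_5$ is then automatic because $I_5\geq\sqrt{6/\pi}\,e^{-6t^2}$ there); and for part~(iii) one shows $\sqrt{6/\pi}\,e^{-6t^2}\leq I_k(t)$ for $k\in\{1,\dots,5\}$ on $[\alpha_{2,\infty}^\circ,0.5]$. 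Each comparison between two $I_k$ reduces to a polynomial sign analysis on the sub-intervals $U_{d,i}$ on which $I_d$ is a single polynomial in $t$, and each comparison involving $\sqrt{6/\pi}\,e^{-6t^2}$ can be dispatched by the method used in the proof of Theorem~\ref{DSH.sec.3.cor.2}, namely differentiating the logarithm of the relevant ratio and isolating its finitely many critical points. The main obstacle is therefore bookkeeping rather than a conceptual difficulty: one must systematically traverse every piece of the piecewise polynomial expressions for $I_3$, $I_4$, and $I_5$ without leaving gaps, but within each piece the required inequality is an elementary calculation.
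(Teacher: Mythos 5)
Your plan reproduces exactly what the paper indicates but omits: Theorem~\ref{DSH.sec.3.thm.2} settles the monotonicity for $d\geq 5$ (identifying $I_5$ as the tail infimum on $[0,0.20916]$ and $\sqrt{6/\pi}\,e^{-6t^2}$ as the non-attained tail infimum on $]\delta,0.5]$), and the remaining comparisons among $I_1,\dots,I_5$ and the Gaussian limit are finite piecewise-polynomial sign checks via~(\ref{DSH.sec.3.thm.2.eq.1}), with the two breakpoints $1/\sqrt{2}-1/2$ and $\alpha_{2,\infty}^\circ$ arising from $I_1=I_2$ and $I_2=\sqrt{6/\pi}\,e^{-6t^2}$ respectively. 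This matches the paper's stated strategy, so the proposal is correct and essentially the same.
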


\section{The limit of a more general family of integrals}\label{DSH.sec.4}

In this section, $k$ is a fixed non-negative even integer and the integral
\begin{equation}\label{DSH.sec.4.eq.0}
I_{d,k}(t)=\frac{2\sqrt{d}}{\pi}\!\int_{0}^{+\infty}\biggl(\frac{\sin\,u}{u}\biggr)^{\!d}\!\cos\Bigl(2\sqrt{d}tu\Bigr)u^kdu\mbox{.}
\end{equation}
that generalizes $I_d(t)$ is considered. The following theorem is established.

\begin{thm}\label{DSH.sec.4.thm.1}
Consider a non-negative, even integer $k$. If $d$ is an integer at least $k+2$, then for every non-negative number $t$,
\begin{multline*}
\biggl|I_{d,k}(t)-\frac{T_k(t)}{\sqrt{d}^k}\biggr|\leq\sqrt{\frac{6}{\pi}}\biggl(\frac{26\sqrt{6}}{25\sqrt{d}}\biggr)^{\!k+2}\Biggl(1+\sqrt{\frac{6}{\pi}}t\Biggr)(k/2+1)!\\
\hfill+\frac{12}{\pi\sqrt{d}e^{d/6}}+2\sqrt{d}\biggl(\frac{\pi}{2}\biggr)^{\!k}\biggl(\frac{17}{20}\biggr)^{\!d}\mbox{.}
\end{multline*}
\end{thm}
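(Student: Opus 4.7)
The plan is to mirror the approach used to prove Theorem \ref{DSH.sec.1.thm.0} but with cruder Taylor bounds that suffice for a first-order estimate while remaining tractable uniformly in $k$. I first split the integral in (\ref{DSH.sec.4.eq.0}) at $u=\psi(1)$. For the portion over $[\psi(1),+\infty[$, Lemma \ref{DSH.sec.1.lem.0} applied with $\varepsilon=\psi(1)$, combined with $\sin\psi(1)/\psi(1)<17/20$ from (\ref{DSH.sec.1.eq.4.5}) and $2/\pi<17/20$, produces the last term $2\sqrt{d}(\pi/2)^k(17/20)^d$ after multiplication by $2\sqrt{d}/\pi$. The hypothesis $d\geq k+2$ is precisely what Lemma \ref{DSH.sec.1.lem.0} requires.

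Denote by $F_{d,k}(t)$ the portion of $I_{d,k}(t)$ over $[0,\psi(1)]$. The change of variables $u=\psi(x)$ turns $F_{d,k}(t)$ into
$$F_{d,k}(t)=\frac{2\sqrt{d}}{\pi}\!\int_0^1 e^{-dx^2\!/6}\cos\bigl(2\sqrt{d}t\psi(x)\bigr)\psi(x)^k\frac{d\psi}{dx}(x)dx\mbox{.}$$
I would compare the integrand to $e^{-dx^2\!/6}\cos(2\sqrt{d}tx)x^k$ via the decomposition
$$\cos\bigl(2\sqrt{d}t\psi(x)\bigr)\psi(x)^k\psi'(x)-\cos\bigl(2\sqrt{d}tx\bigr)x^k=\cos\bigl(2\sqrt{d}t\psi(x)\bigr)\bigl[\psi(x)^k\psi'(x)-x^k\bigr]+x^k\bigl[\cos\bigl(2\sqrt{d}t\psi(x)\bigr)-\cos\bigl(2\sqrt{d}tx\bigr)\bigr]\mbox{.}$$
Using $|\cos A-\cos B|\leq|A-B|$ together with the Taylor bounds $|\psi(x)-x|\leq Cx^3$ and $|\psi'(x)-1|\leq C'x^2$ (both derived from Taylor approximations with Lagrange remainder and Proposition \ref{DSH.sec.3.prop.1}), I would deduce a pointwise bound $M_k x^{k+2}+C''\sqrt{d}t\,x^{k+3}$ on the integrand error, in which $M_k$ grows at most linearly in $k$.

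After the substitution $y=\sqrt{d}x$, these error integrals reduce to Gaussian moments. For $k$ even, (\ref{DSH.sec.1.thm.0.eq.1.4}) together with standard bounds on central binomial coefficients yields
$$\int_0^{+\infty}e^{-y^2\!/6}y^{k+2}dy\leq\sqrt{3\pi/2}\bigl(\sqrt{6}\bigr)^{k+2}(k/2+1)!\mbox{,}$$
while the odd-moment formula in (\ref{DSH.sec.1.thm.0.eq.1.4}) gives $\int_0^{+\infty}e^{-y^2\!/6}y^{k+3}dy=3(\sqrt{6})^{k+2}(k/2+1)!$. The $t$-independent contribution to the error then takes the form of $\sqrt{6/\pi}(\sqrt{6}/\sqrt{d})^{k+2}(k/2+1)!$ times $M_k$, and the $t$-dependent contribution takes the form of $\sqrt{6/\pi}\cdot\sqrt{6/\pi}\,t\,(\sqrt{6}/\sqrt{d})^{k+2}(k/2+1)!$ times an absolute constant. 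Inflating $\sqrt{6}$ to $26\sqrt{6}/25$ absorbs both these constants and the factor $M_k$ uniformly in $k$, producing the main term of the stated bound with the factor $1+\sqrt{6/\pi}t$.

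It remains to compare the quantity obtained by substituting $\cos(2\sqrt{d}tx)x^k$ into the integrand of $F_{d,k}(t)$ with $T_k(t)/\sqrt{d}^k$. After $y=\sqrt{d}x$, this difference equals
$$\frac{2}{\pi\sqrt{d}^k}\!\int_{\sqrt{d}}^{+\infty}e^{-y^2\!/6}\cos(2ty)y^k dy\mbox{,}$$
whose absolute value is bounded using Proposition \ref{DSH.sec.1.prop.2} (the hypothesis $d\geq k+2$ implies $d\geq\sqrt{6k}$, since $(k+2)^2\geq 6k$). This yields exactly the middle term $12/(\pi\sqrt{d}e^{d/6})$, and the triangle inequality combines the three pieces into the claimed bound. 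The main obstacle will be verifying that $M_k$, together with the various universal constants produced along the way, can be absorbed into $(26/25)^{k+2}$ uniformly in $k$; this is elementary but requires tracking constants from Proposition \ref{DSH.sec.3.prop.1} carefully.
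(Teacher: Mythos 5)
Your proposal is correct and reaches the same bound, but it departs from the paper's route in two interesting ways. First, the paper does not subtract $\cos(2\sqrt{d}tx)x^k$ directly: it applies the angle-addition identity (\ref{DSH.sec.1.eq.7.25}) to split $\cos\bigl(2\sqrt{d}t\psi(x)\bigr)$ into a $\cos(2\sqrt{d}tx)$ part and a $\sin(2\sqrt{d}tx)$ part, and then estimates $c_d(x,t)\psi^k(x)$ and $s_d(x,t)\psi^k(x)$ in two separate lemmas (Lemmas~\ref{DSH.sec.4.lem.1} and~\ref{DSH.sec.4.lem.2}). Your additive decomposition
\[
\cos\bigl(2\sqrt{d}t\psi(x)\bigr)\bigl[\psi(x)^k\psi'(x)-x^k\bigr]+x^k\bigl[\cos\bigl(2\sqrt{d}t\psi(x)\bigr)-\cos\bigl(2\sqrt{d}tx\bigr)\bigr]
\]
coupled with $|\cos A-\cos B|\leq|A-B|$ achieves the same goal with less machinery, and in fact yields a sharper pointwise bound. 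Second, your claim that $M_k$ grows linearly in $k$ is defensible, but only because of the factorization you implicitly need: write $\psi(x)^k-x^k=(\psi(x)-x)\sum_{i=0}^{k-1}\psi(x)^ix^{k-1-i}$ and use $\psi(x)<x$ on $(0,1]$ (which holds since $\phi(u)>u$, as $(\sin u)/u<e^{-u^2/6}$), so the geometric sum is at most $kx^{k-1}$; this gives $M_k\lesssim 0.018\,k+0.054$. The paper instead expands $\psi^k$ via the binomial theorem and obtains a bound of exponential type $(1+|\psi'''|/6)^k\approx1.018^k$. Both are comfortably below $(26/25)^{k+2}=1.04^{k+2}$, so either works; but had you tried a direct binomial bound without the factored form, ``$M_k$ grows at most linearly'' would have been false, so it is worth being explicit about that step. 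One final remark: your absorption of $M_k$ and the constant $C''\approx 0.036$ into $(26/25)^{k+2}$ does need the check that $1.04^{k+2}-0.018k-0.054$ is minimized at $k=0$ (where it is about $1.03>0$), which follows because $1.04^{k+2}\ln 1.04>0.018$ for all $k\geq0$; you flagged this as elementary-but-to-be-done, and it is indeed straightforward. Apart from these differences, the handling of the tail via Lemma~\ref{DSH.sec.1.lem.0}, the moment computations from~(\ref{DSH.sec.1.thm.0.eq.1.4}), the bound $(k+2)!/(k/2+1)!\leq 2^{k+2}(k/2+1)!$, and the use of Proposition~\ref{DSH.sec.1.prop.2} (with the observation $(k+2)^2\geq6k$) all match the paper exactly.
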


The proof of Theorem \ref{DSH.sec.4.thm.1} relies on the same general strategy than that of Theorem~\ref{DSH.sec.1.thm.0}. Recall that $\psi$ is the inverse of the function $\phi$ given by (\ref{DSH.sec.1.eq.0.1}). As discussed in Section \ref{DSH.sec.1}, $\psi$ is a strictly increasing analytic function on $[0,1]$ such that $\psi(0)$ vanishes and $\psi(1)$ is positive and less than $\pi/2$. Moreover, (\ref{DSH.sec.1.eq.6}) provides the first terms of its Taylor series expansion around $0$. Denote
\begin{equation}\label{DSH.sec.4.eq.0.5}
F_{d,k}(t)=\frac{2\sqrt{d}}{\pi}\!\int_0^{\psi(1)}\biggl(\frac{\sin\,u}{u}\biggr)^{\!d}\!\cos\Bigl(2\sqrt{d}tu\Bigr)u^kdu\mbox{.}
\end{equation}

The change of variables $u=\psi(x)$ and the trigonometric identity (\ref{DSH.sec.1.eq.7.25}) yield
\begin{multline}\label{DSH.sec.4.eq.1}
F_{d,k}(t)=\frac{2\sqrt{d}}{\pi}\!\int_0^1e^{-dx^2\!/6}\!\cos\Bigl(2\sqrt{d}tx\Bigr)c_d(x,t)\psi^k(x)dx\\
\hfill-\frac{2\sqrt{d}}{\pi}\!\int_0^1e^{-dx^2\!/6}\!\sin\Bigl(2\sqrt{d}tx\Bigr)s_d(x,t)\psi^k(x)dx
\end{multline}
where $c_d(x,t)$ and $s_d(x,t)$ are given by (\ref{DSH.sec.1.eq.7.6}). By the following lemma, $F_{d,k}(t)$ gets exponentially close to $I_{d,k}(t)$ when $d$ goes to infinity.
\begin{lem}\label{DSH.sec.4.lem.0}
For every integer $d$ at least $k+2$ and non-negative number $t$,
$$
\bigl|I_{d,k}(t)-F_{d,k}(t)\bigr|\leq2\sqrt{d}\biggl(\frac{\pi}{2}\biggr)^{\!k}\biggl(\frac{17}{20}\biggr)^{\!d}\mbox{.}
$$
\end{lem}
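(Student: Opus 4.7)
The plan is to observe that the claimed bound is a direct consequence of the general tail estimate provided by Lemma \ref{DSH.sec.1.lem.0}, applied with the cutoff $\varepsilon=\psi(1)$. Since $F_{d,k}(t)$ is defined by (\ref{DSH.sec.4.eq.0.5}) as the truncation of the defining integral (\ref{DSH.sec.4.eq.0}) of $I_{d,k}(t)$ over $[0,\psi(1)]$, the difference $I_{d,k}(t)-F_{d,k}(t)$ equals $2\sqrt{d}/\pi$ times the tail integral of the same integrand over $[\psi(1),+\infty[$. The entire task is to estimate this tail.

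First I would verify that $\varepsilon=\psi(1)$ lies in the interval $]0,\pi/2[$, which is admissible in the hypothesis of Lemma \ref{DSH.sec.1.lem.0}; this is ensured by the numerical bracket $0.9832<\psi(1)<0.9833$ recorded earlier in the paper. I would then invoke Lemma \ref{DSH.sec.1.lem.0} with this choice of $\varepsilon$ and with the integer $k$ already fixed in the statement, provided $d$ is at least $k+2$, which is exactly the hypothesis of the lemma being proved.

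Next I would replace the base $\max\{\sin\varepsilon/\varepsilon,2/\pi\}$ appearing in the resulting upper bound by the explicit number $17/20$. This replacement is justified by the inequality (\ref{DSH.sec.1.eq.4.5}), which asserts $2/\pi<\sin\psi(1)/\psi(1)<17/20$, so the maximum is $\sin\psi(1)/\psi(1)$ and is strictly less than $17/20$. At this stage one has
$$
\biggl|\int_{\psi(1)}^{+\infty}\biggl(\frac{\sin u}{u}\biggr)^{\!d}\!\cos\bigl(2\sqrt{d}tu\bigr)u^kdu\biggr|\leq\pi\biggl(\frac{\pi}{2}\biggr)^{\!k}\biggl(\frac{17}{20}\biggr)^{\!d}\mbox{.}
$$

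Multiplying this tail bound by the prefactor $2\sqrt{d}/\pi$ cancels the $\pi$ and yields exactly the announced right-hand side $2\sqrt{d}(\pi/2)^k(17/20)^d$. There is no genuine obstacle here: the argument is essentially the same as the one used to deduce Lemma \ref{DSH.sec.1.lem.1} from Lemma \ref{DSH.sec.1.lem.0}, only carried out for a general non-negative even integer $k$ rather than for $k=0$. The only point requiring minimal care is to recall that the hypothesis $d\geq k+2$ is needed precisely in order to apply Lemma \ref{DSH.sec.1.lem.0}.
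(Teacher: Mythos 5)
Your proof is correct and matches the paper's own argument, which simply says the lemma "follows immediately from Lemma \ref{DSH.sec.1.lem.0} and from (\ref{DSH.sec.1.eq.4.5})." You have spelled out exactly that deduction: take $\varepsilon=\psi(1)$, invoke the tail estimate, replace the max by $17/20$, and multiply by the prefactor $2\sqrt{d}/\pi$.
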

\begin{proof}
This follows immediately from Lemma \ref{DSH.sec.1.lem.0} and from (\ref{DSH.sec.1.eq.4.5}).
\end{proof}

The integrands in the right-hand side of (\ref{DSH.sec.4.eq.1}) are now estimated.

\begin{lem}\label{DSH.sec.4.lem.1}
For every positive integer $d$, every non-negative number $t$, and every number $x$ contained in the interval $[0,1]$,
$$
\Big|c_d(x,t)\psi^k(x)-x^k\Bigr|\leq\biggl(\frac{26x}{25}\biggr)^{\!k+2}+\sqrt{d}tx^{k+3}\mbox{.}
$$
\end{lem}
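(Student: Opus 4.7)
The plan is to decompose $c_d(x,t)\psi^k(x) - x^k$ into two pieces that can be estimated by elementary means, mirroring the proof strategy used in Lemmas \ref{DSH.sec.1.lem.3} and \ref{DSH.sec.1.lem.4} but keeping only first-order information since Theorem \ref{DSH.sec.4.thm.1} is less precise than Theorem \ref{DSH.sec.1.thm.0}. The starting point is the identity
$$
c_d(x,t)\psi^k(x)-x^k=\frac{d\psi}{dx}(x)\psi^k(x)\Bigl[\cos\Bigl(2\sqrt{d}t\bigl(\psi(x)-x\bigr)\Bigr)-1\Bigr]+\frac{d\psi}{dx}(x)\psi^k(x)-x^k\mbox{.}
$$

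The first piece, call it $A$, is bounded using the inequality $|\cos z-1|\leq|z|$, which reduces $|A|$ to $2\sqrt{d}t|\psi(x)-x|\psi'(x)\psi^k(x)$. Since (\ref{DSH.sec.1.eq.6}) gives $\psi(0)=0$, $\psi'(0)=1$, $\psi''(0)=0$, an order-$2$ Taylor approximation of $\psi$ with Lagrange form remainder and Proposition \ref{DSH.sec.3.prop.1}(ii) yield $|\psi(x)-x|\leq0.10718\,x^3/6$. Combined with Proposition \ref{DSH.sec.3.prop.1}(i) and the estimate $\psi(x)\leq x$ on $[0,1]$ (which follows from $\sin u\leq u\,e^{-u^2\!/6}$ on the relevant interval, equivalent to $\phi(u)\geq u$), this gives $|A|\leq\sqrt{d}t\,x^{k+3}$ with room to spare.

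The second piece, call it $B$, is split further as $B=(\psi'(x)-1)\psi^k(x)+(\psi^k(x)-x^k)$. An order-$2$ Taylor approximation of $\psi'$ (again exploiting $\psi''(0)=0$) combined with Proposition \ref{DSH.sec.3.prop.1}(ii) bounds $|\psi'(x)-1|$ by a small multiple of $x^2$, so using $\psi^k(x)\leq x^k$ handles the first summand. The second summand is treated by the telescoping identity
$$
x^k-\psi^k(x)=\bigl(x-\psi(x)\bigr)\sum_{j=0}^{k-1}x^{k-1-j}\psi^j(x)\mbox{,}
$$
which, together with $0\leq\psi(x)\leq x$ and the bound on $|\psi(x)-x|$ above, contributes at most a small multiple of $k\,x^{k+2}$. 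Adding both contributions gives $|B|\leq(\alpha+\beta k)\,x^{k+2}$ with explicit small constants $\alpha,\beta$.

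The remaining step, which is the main (and only real) obstacle, is to verify that the affine bound $\alpha+\beta k$ is dominated by $(26/25)^{k+2}$ uniformly in $k\geq0$. This is a straightforward exponential-versus-linear check: the base case $k=0$ requires $\alpha\leq(26/25)^2=676/625$, while for large $k$ the geometric factor $(26/25)^k$ wins easily. An application of Bernoulli's inequality of the form $(26/25)^{k+2}\geq(676/625)(1+k/25)$ gives a clean uniform estimate covering all $k$. Combining the bounds on $|A|$ and $|B|$ with the triangle inequality then yields exactly the stated inequality.
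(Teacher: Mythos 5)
Your proposal is correct and takes a genuinely different route from the paper. The paper's proof keeps $c_d(x,t)\psi^k(x)-x^k$ together: it Taylor-expands $\cos$, $\psi$, and $\psi'$ with Lagrange remainders, substitutes into $c_d(x,t)$, expands $\psi^k(x)=\bigl(x+\psi'''(\eta)x^3/6\bigr)^k$ by the binomial theorem, and then bounds the resulting sum by $\bigl(1+|\psi'''(\eta)|/6\bigr)^k$ before invoking Proposition~\ref{DSH.sec.3.prop.1}. In contrast you split the difference at the level of $\cos$ into the pieces $A=\psi'(x)\psi^k(x)\bigl[\cos(\cdots)-1\bigr]$ and $B=\psi'(x)\psi^k(x)-x^k$, bound $A$ with the elementary inequality $|\cos z-1|\leq|z|$, and treat $\psi^k(x)-x^k$ inside $B$ with the telescoping factorization $a^k-b^k=(a-b)\sum a^{k-1-j}b^j$. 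The telescoping is a bit slicker than the binomial expansion and makes the $k$-dependence of the $x^{k+2}$-coefficient transparently linear, at which point the Bernoulli comparison with $(26/25)^{k+2}$ is a short exponential-versus-linear check. The trade-off is that your argument leans on $0\leq\psi(x)\leq x$ on $[0,1]$ (the binomial route only needs $x\leq1$), and the paper never records this fact; your justification via $\sin u\leq u e^{-u^2/6}$, equivalently $\phi(u)\geq u$, is correct in substance but would need to be filled in (e.g., by showing $\log\bigl((\sin u)/u\bigr)+u^2/6\leq0$ from the alternating Taylor series), since Proposition~\ref{DSH.sec.3.prop.1} alone, giving $|\psi'|<1.00001$, does not rule out $\psi(x)>x$ somewhere. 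With that one step made rigorous, your numerics close cleanly: $|A|\leq 2\cdot\tfrac{0.10718}{6}\cdot1.00001\,\sqrt{d}\,t\,x^{k+3}<\sqrt{d}\,t\,x^{k+3}$, and $|B|\leq\bigl(0.0536+0.0179\,k\bigr)x^{k+2}\leq(26/25)^{k+2}x^{k+2}$ by $(26/25)^{k+2}\geq(676/625)(1+k/25)$.
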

\begin{proof}
Recall that $\psi$ is analytic on the interval $[0,1]$ and that the first terms of its Taylor series expansion around $0$ are given by (\ref{DSH.sec.1.eq.6}). Hence, one obtains by Taylor series approximations of $\psi$ and its derivative with Lagrange form remainders that, for every number $x$ contained in $[0,1]$,
\begin{equation}\label{DSH.sec.4.lem.1.eq.1}
\psi(x)=x+\frac{d^3\psi}{dx^3}(\eta)\frac{x^3}{6}
\end{equation}
where $0\leq\eta\leq{x}$ and
\begin{equation}\label{DSH.sec.4.lem.1.eq.2}
\frac{d\psi}{dx}(x)=1+\frac{d^3\psi}{dx^3}(\xi)\frac{x^2}{2}
\end{equation}
where $0\leq\xi\leq{x}$. Similarly, a first order Taylor series approximation of $\cos z$ around $0$ with a Lagrange form remainder,
$$
\cos z=1-z\sin\zeta
$$
where $\zeta$ is a real number that depends on $z$. Substituting
$$
z=2\sqrt{d}t\bigl(\psi(x)-x\bigr)\mbox{,}
$$
within this equality yields
\begin{equation}\label{DSH.sec.4.lem.1.eq.3}
\cos\Bigl(2\sqrt{d}t\bigl(\psi(x)-x\bigr)\Bigr)=1-2\sqrt{d}t\bigl(\psi(x)-x\bigr)\sin\zeta
\end{equation}
where $\zeta$ depends on $d$, $t$, and $x$. One obtains by combining the equalities (\ref{DSH.sec.4.lem.1.eq.1}), (\ref{DSH.sec.4.lem.1.eq.2}), and (\ref{DSH.sec.4.lem.1.eq.3}) that, for every $x$ contained in $[0,1]$,
$$
c_d(x,t)=1+\frac{d^3\psi}{dx^3}(\xi)\frac{x^2}{2}-\sqrt{d}t\frac{d^3\psi}{dx^3}(\eta)\frac{x^3}{3}\frac{d\psi}{dx}(x)\sin\zeta\mbox{.}
$$

Now consider a positive integer $k$ and observe that, by (\ref{DSH.sec.4.lem.1.eq.1}),
$$
\psi^k(x)=x^k+x^{k+2}\sum_{i=1}^k{k\choose{i}}\biggl(\frac{d^3\psi}{dx^3}(\eta)\biggr)^{\!i}\frac{x^{2i-2}}{6^i}\mbox{.}
$$

As a consequence, for every number $x$ in $[0,1]$,
\begin{multline*}
c_d(x,t)\psi^k(x)=x^k+\frac{d^3\psi}{dx^3}(\xi)\frac{x^{k+2}}{2}-\sqrt{d}t\frac{d^3\psi}{dx^3}(\eta)\frac{x^{k+3}}{3}\frac{d\psi}{dx}(x)\sin\zeta\\
\hfill+\cos\Bigl(2\sqrt{d}t\bigl(\psi(x)-x\bigr)\Bigr)\frac{d\psi}{dx}(x)x^{k+2}\sum_{i=1}^k{k\choose{i}}\biggl(\frac{d^3\psi}{dx^3}(\eta)\biggr)^{\!i}\frac{x^{2i-2}}{6^i}
\end{multline*}
and in turn, since $x$ belongs to $[0,1]$,
\begin{multline*}
\Bigl|c_d(x,t)\psi^k(x)-x^k\Bigr|\leq\biggl|\frac{d^3\psi}{dx^3}(\xi)\biggr|\frac{x^{k+2}}{2}+\biggl|\frac{d^3\psi}{dx^3}(\eta)\frac{d\psi}{dx}(x)\biggr|\frac{\sqrt{d}tx^{k+3}}{3}\\
\hfill+\biggl|\frac{d\psi}{dx}(x)\biggr|x^{k+2}\sum_{i=1}^k{k\choose{i}}\biggl|\frac{1}{6}\frac{d^3\psi}{dx^3}(\eta)\biggr|^i\mbox{.}
\end{multline*}

Now observe that
$$
\sum_{i=1}^k{k\choose{i}}\biggl|\frac{1}{6}\frac{d^3\psi}{dx^3}(\eta)\biggr|^i<\biggl(\frac{1}{6}\biggl|\frac{d^3\psi}{dx^3}(\eta)\biggr|+1\biggr)^{\!k}\mbox{.}
$$

Hence, for every number $x$ in $[0,1]$,
\begin{multline*}
\Bigl|c_d(x,t)\psi^k(x)-x^k\Bigr|\leq\Biggl(\frac{1}{2}\biggl|\frac{d^3\psi}{dx^3}(\xi)\biggr|+\biggl|\frac{d\psi}{dx}(x)\biggr|\biggl(\frac{1}{6}\biggl|\frac{d^3\psi}{dx^3}(\eta)\biggr|+1\biggr)^{\!k}\Biggr)x^{k+2}\\
\hfill+\biggl|\frac{d^3\psi}{dx^3}(\eta)\frac{d\psi}{dx}(x)\biggr|\frac{\sqrt{d}tx^{k+3}}{3}\mbox{.}
\end{multline*}

By Proposition \ref{DSH.sec.3.prop.1}, the coefficients of $x^{k+2}$ and $\sqrt{d}tx^{k+3}$ in the right-hand side are at most $(26/25)^{k+2}$ and $1$, respectively and the lemma follows.
\end{proof}

The second integrand in the right-hand side of (\ref{DSH.sec.4.eq.1}) is estimated as follows.

\begin{lem}\label{DSH.sec.4.lem.2}
For every positive integer $d$, every non-negative number $t$, and every number $x$ contained in the interval $[0,1]$,
$$
\Big|s_d(x,t)\psi^k(x)\Bigr|\leq\Biggl(\biggl(\frac{26}{25}\biggr)^{\!k+2}-1\Biggr)\sqrt{d}tx^{k+3}\mbox{.}
$$
\end{lem}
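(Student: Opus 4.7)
The plan is to follow the same general strategy as in the proof of Lemma~\ref{DSH.sec.4.lem.1}, with the simplification that the elementary bound $|\sin z|\leq|z|$ replaces the Taylor expansion of cosine. Applying it to $z=2\sqrt{d}t(\psi(x)-x)$ extracts a factor $2\sqrt{d}t|\psi(x)-x|$ and reduces the statement to the purely pointwise estimate
$$2\,|\psi(x)-x|\,\left|\frac{d\psi}{dx}(x)\right||\psi^k(x)|\leq\Bigl((26/25)^{k+2}-1\Bigr)x^{k+3}$$
for every $x$ in $[0,1]$, independently of $d$ and $t$.

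To establish this estimate, I would reuse the Taylor expansion $\psi(x)=x+(d^3\psi/dx^3)(\eta)x^3/6$ with Lagrange remainder that already appears in the proof of Lemma~\ref{DSH.sec.4.lem.1}. Combined with Proposition~\ref{DSH.sec.3.prop.1}, it immediately gives $|\psi(x)-x|\leq(0.10718/6)x^3$. Since $0.10718/6<1/25$ and $x\in[0,1]$, the same expansion forces $|\psi(x)|\leq(26/25)x$, whence $|\psi^k(x)|\leq(26/25)^k x^k$. Together with the bound $|d\psi/dx(x)|<1.00001$, also from Proposition~\ref{DSH.sec.3.prop.1}, this shows the left-hand side above is at most $2\cdot(0.10718/6)\cdot 1.00001\cdot(26/25)^k x^{k+3}$.

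The main obstacle is the remaining numerical inequality $2\cdot(0.10718/6)\cdot 1.00001\cdot(26/25)^k\leq(26/25)^{k+2}-1$ for every non-negative integer $k$, which one must verify in order to recover exactly the constant stated in the lemma. Dividing by $(26/25)^k$ reduces it to $2\cdot 0.10718\cdot 1.00001/6\leq(26/25)^2-(25/26)^k$, whose right-hand side is minimized at $k=0$, where it equals $(26/25)^2-1=51/625\approx 0.0816$; since the left-hand side is approximately $0.0358$, the inequality holds with a comfortable margin for every $k\geq 0$. This is the only step where the particular choice of the constant $(26/25)^{k+2}-1$ and the bounds from Proposition~\ref{DSH.sec.3.prop.1} interact delicately, and fortunately the numerical slack is more than enough.
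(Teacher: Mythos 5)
Your proof is correct and follows essentially the same path as the paper: first-order bound on the sine factor, the order-$3$ Lagrange-remainder expansion of $\psi$ to control $\psi(x)-x$, a bound on $\psi^k(x)$, and a final numerical check against the constant $(26/25)^{k+2}-1$. The only cosmetic difference is that you bound $|\psi^k(x)|$ by $(26/25)^k x^k$ via $|\psi(x)|\le(26/25)x$, whereas the paper instead writes $\psi(x)=\psi'(\eta_1)x$ (order-$1$ Lagrange form) so that $|\psi^k(x)|\le(1.00001)^k x^k$; both are loose enough that the concluding inequality holds with room to spare, and your check that the slack is minimized at $k=0$ is a clean way to close it.
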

\begin{proof}
It follows from order $1$ and $3$ Taylor series approximations of $\psi$ with Lagrange form remainders that, for every $x$ in $[0,1]$,
\begin{equation}\label{DSH.sec.4.lem.2.eq.1}
\psi(x)=\frac{d\psi}{dx}(\eta_1)x
\end{equation}
where $0\leq\eta_1\leq{x}$ and
\begin{equation}\label{DSH.sec.4.lem.2.eq.2}
\psi(x)=x+\frac{d^3\psi}{dx^3}(\eta_3)\frac{x^3}{6}
\end{equation}
where $0\leq\eta_3\leq{x}$. Likewise, one obtains from a first order Taylor series approximation of $\sin z$ around $0$ with a Lagrange form remainder that
$$
\sin z=z\cos\zeta
$$
where $\zeta$ is a real number that depends on $z$. By the change of variables
$$
z=2\sqrt{d}t\bigl(\psi(x)-x\bigr)\mbox{,}
$$
this can be rewritten into
\begin{equation}\label{DSH.sec.4.lem.2.eq.3}
\sin\Bigl(2\sqrt{d}t\bigl(\psi(x)-x\bigr)\Bigr)=2\sqrt{d}t\bigl(\psi(x)-x\bigr)\cos\zeta
\end{equation}
where $\zeta$ depends on $d$, $t$, and $x$. Now consider a positive integer $k$. Combining (\ref{DSH.sec.4.lem.2.eq.1}),  (\ref{DSH.sec.4.lem.2.eq.2}), and (\ref{DSH.sec.4.lem.2.eq.3}) implies that, for every $x$ contained in $[0,1]$,
$$
s_d(x,t)\psi^k(x)=\sqrt{d}t\frac{d^3\psi}{dx^3}(\eta_3)\biggl(\frac{d\psi}{dx}(\eta_1)\biggr)^{\!k}\frac{x^{k+3}}{3}\cos\zeta\mbox{.}
$$

Therefore, for every number $x$ contained in $[0,1]$,
$$
\Bigl|s_d(x,t)\psi^k(x)\Bigr|\leq\biggl|\frac{d^3\psi}{dx^3}(\eta_3)\biggr|\biggl|\frac{d\psi}{dx}(\eta_1)\biggr|^{k}\frac{\sqrt{d}tx^{k+3}}{3}\mbox{.}
$$

Bounding the coefficient of $\sqrt{d}tx^{k+3}$ in the right-hand side of this inequality by means of Proposition \ref{DSH.sec.3.prop.1} completes the proof.
\end{proof}

Theorem \ref{DSH.sec.4.thm.1} is now established.

\begin{proof}[Proof of Theorem \ref{DSH.sec.4.thm.1}]
Let $d$ and $k$ be two integers such that $d$ is positive and $k$ non-negative. For any non-negative number $t$, denote
\begin{equation}\label{DSH.sec.4.thm.1.eq.2}
J_{d,k}(t)=\frac{2\sqrt{d}}{\pi}\!\int_0^1e^{-dx^2\!/6}\cos\Bigl(2\sqrt{d}tx\Bigr)x^kdx\mbox{.}
\end{equation}

According to (\ref{DSH.sec.4.eq.1}) and the triangle inequality,
\begin{multline*}
\bigl|F_{d,k}(t)-J_{d,k}(t)\bigr|\leq\frac{2\sqrt{d}}{\pi}\!\int_0^1e^{-dx^2\!/6}\Bigl|c_d(x,t)\psi^k(x)-x^k\Bigr|dx\\
\hfill+\frac{2\sqrt{d}}{\pi}\!\int_0^1e^{-dx^2\!/6}\Bigl|s_d(x,t)\psi^k(x)\Bigr|dx\mbox{.}
\end{multline*}

Lemmas \ref{DSH.sec.4.lem.1} and \ref{DSH.sec.4.lem.2} then imply that
\begin{multline*}
\bigl|F_{d,k}(t)-J_{d,k}(t)\bigr|\leq\frac{2\sqrt{d}}{\pi}\biggl(\frac{26}{25}\biggr)^{\!k+2}\Biggl(\int_0^1e^{-dx^2\!/6}x^{k+2}dx\\
\hfill+\sqrt{d}t\!\int_0^1e^{-dx^2\!/6}x^{k+3}dx\Biggr)\mbox{.}
\end{multline*}
and if $k$ is even, it follows from (\ref{DSH.sec.1.thm.0.eq.1.375}) and (\ref{DSH.sec.1.thm.0.eq.1.4}) that
$$
\bigl|F_{d,k}(t)-J_{d,k}(t)\bigr|\leq\sqrt{\frac{6}{\pi}}\biggl(\frac{26\sqrt{3}}{25\sqrt{2d}}\biggr)^{\!k+2}\Biggl(\frac{(k+2)!}{(k/2+1)!}+2^{k+2}(k/2+1)!\sqrt{\frac{6}{\pi}}t\Biggr)\mbox{.}\\
$$

However, observe that
$$
\frac{(k+2)!}{(k/2+1)!}\leq2^{k+2}(k/2+1)!
$$
and as a consequence
\begin{equation}\label{DSH.sec.4.thm.1.eq.4}
\bigl|F_{d,k}(t)-J_{d,k}(t)\bigr|\leq\sqrt{\frac{6}{\pi}}\biggl(\frac{26\sqrt{6}}{25\sqrt{d}}\biggr)^{\!k+2}\Biggl(1+\sqrt{\frac{6}{\pi}}t\Biggr)(k/2+1)!\\
\end{equation}

One obtains from the change of variables $y=\sqrt{d}x$ in (\ref{DSH.sec.4.thm.1.eq.2}) that
$$
J_{d,k}(t)=\frac{2}{\pi\sqrt{d}^k}\!\int_0^{\sqrt{d}}e^{-y^2\!/6}\cos(2ty)y^kdy\mbox{.}
$$

Therefore by (\ref{DSH.sec.1.eq.6.7}) and the triangle inequality,
$$
\biggl|J_{d,k}(t)-\frac{T_k(t)}{\sqrt{d}^k}\biggr|\leq\frac{2}{\pi\sqrt{d}^k}\!\int_{\sqrt{d}}^{+\infty}e^{-y^2\!/6}y^kdy\mbox{.}
$$

In turn, according to Proposition \ref{DSH.sec.1.prop.2},
\begin{equation}\label{DSH.sec.4.thm.1.eq.5}
\biggl|J_{d,k}(t)-\frac{T_k(t)}{\sqrt{d}^k}\biggr|\leq\frac{12}{\pi\sqrt{d}e^{d/6}}
\end{equation}
when $d$ is at least $\sqrt{6k}$. One obtains by combining~(\ref{DSH.sec.4.thm.1.eq.4}), (\ref{DSH.sec.4.thm.1.eq.5}), and the bound provided by Lemma~\ref{DSH.sec.4.lem.0} with the triangle inequality that if $d$ is at least $k+2$ then for every non-negative number $t$, the desired inequality holds. This follows in particular from the observation that $k+2$ is greater than $\sqrt{6k}$.
\end{proof}

Theorem \ref{DSH.sec.4.thm.1} will be used in the next section in the special case when $k$ is equal to $4$. The following statement treats this case.

\begin{cor}\label{DSH.sec.4.cor.1}
If $d$ is at least $124$ then, for every non-negative number $t$,
$$
\Biggl|I_{d,4}(t)-27\sqrt{\frac{6}{\pi}}\frac{1-24t^2+48t^4}{d^2e^{6t^2}}\Biggr|<\frac{2267+3132t}{d^3}\mbox{.}
$$
\end{cor}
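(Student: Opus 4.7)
The plan is to specialize Theorem~\ref{DSH.sec.4.thm.1} to the case $k=4$, which is permitted because $124 \geq k+2 = 6$. By (\ref{DSH.sec.1.eq.7}) and (\ref{DSH.sec.1.eq.8}), one has $T_4(t)/\sqrt{d}^4 = 27\sqrt{6/\pi}(1-24t^2+48t^4)/(d^2e^{6t^2})$, which is exactly the Gaussian expression subtracted from $I_{d,4}(t)$ in the statement of the corollary. Hence the whole task reduces to showing that the three error terms supplied by Theorem~\ref{DSH.sec.4.thm.1} with $k=4$ sum to less than $(2267+3132t)/d^3$ whenever $d\geq 124$.

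The first of these error terms is the dominant one. With $k=4$ and $(k/2+1)! = 6$, it equals $6\sqrt{6/\pi}(26\sqrt{6}/25)^6 d^{-3}(1+\sqrt{6/\pi}\,t)$. I would compute $(26\sqrt{6}/25)^6 = 26^6 \cdot 216/25^6$ explicitly and, together with standard bounds on $\sqrt{6/\pi}$, show that $6\sqrt{6/\pi}(26\sqrt{6}/25)^6$ lies strictly below $2267$ (with a margin of roughly $0.7$) and that $6(6/\pi)(26\sqrt{6}/25)^6$ lies strictly below $3132$ (with a margin of roughly $0.2$). Thus the first error term alone is already of the form $(A+Bt)/d^3$ with $A<2267$ and $B<3132$.

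It then remains to absorb the two exponentially small error terms, $12/(\pi\sqrt{d}e^{d/6})$ and $2\sqrt{d}(\pi/2)^4(17/20)^d$, into the constant margin $2267 - A > 0$ of the first term (neither of these terms carries a factor of $t$, so only the constant slack needs to be spent). For the former, plugging in $d=124$ produces a value well below $10^{-3}$, so this term is negligible. For the latter, elementary calculus shows that $d \mapsto 2d^{7/2}(\pi/2)^4(17/20)^d$ is decreasing on $[22,+\infty[$, so its maximum over $[124,+\infty[$ is attained at $d=124$, where a direct evaluation gives roughly $0.46$. The hard part will be simply to verify that this plus the tiny second term is below the margin $2267 - A \approx 0.7$; since both quantities have been cleanly separated, the verification is purely numerical and the claimed inequality then holds for every $d\geq 124$ and every $t\geq 0$.
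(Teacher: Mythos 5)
Your proof is correct and follows essentially the same route as the paper: specialize Theorem~\ref{DSH.sec.4.thm.1} at $k=4$, identify $T_4(t)/\sqrt{d}^4$ with the target Gaussian expression via (\ref{DSH.sec.1.eq.7}) and (\ref{DSH.sec.1.eq.8}), and bound the resulting three error terms by $(2267+3132t)/d^3$ for $d\geq 124$. The only cosmetic difference is that the paper lumps the two exponentially small terms together under the bound $1/(2d^3)$ before comparing coefficients, whereas you keep them separate and invoke a monotonicity argument to justify evaluating at $d=124$; both yield the same final numeric check.
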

\begin{proof}
It follows from Theorem \ref{DSH.sec.4.thm.1}, from (\ref{DSH.sec.1.eq.7}), and from (\ref{DSH.sec.1.eq.8}) that, if $d$ is at least $6$, then, for every non-negative number $t$,
\begin{multline*}
\Biggl|I_{d,4}(t)-27\sqrt{\frac{6}{\pi}}\frac{1-24t^2+48t^4}{d^2e^{6t^2}}\Biggr|\leq\sqrt{\frac{6}{\pi}}\Biggl(1+\sqrt{\frac{6}{\pi}}t\Biggr)\biggl(\frac{26}{25}\biggr)^{\!6}\frac{1296}{d^3}\\
\hfill+\frac{12}{\pi\sqrt{d}e^{d/6}}+\frac{\pi^4\sqrt{d}}{8}\biggl(\frac{17}{20}\biggr)^{\!d}\mbox{.}
\end{multline*}

However, when $d$ is at least $124$,
$$
\frac{12}{\pi\sqrt{d}e^{d/6}}+\frac{\pi^4\sqrt{d}}{8}\biggl(\frac{17}{20}\biggr)^{\!d}<\frac{1}{2d^3}\mbox{.}
$$
and as a consequence, for these values of $d$,
\begin{multline*}
\Biggl|I_{d,4}(t)-27\sqrt{\frac{6}{\pi}}\frac{1-24t^2+48t^4}{d^2e^{6t^2}}\Biggr|\leq\Biggl(1296\sqrt{\frac{6}{\pi}}\biggl(\frac{26}{25}\biggr)^{\!6}+\frac{1}{2}\Biggr)\frac{1}{d^3}\\
\hfill+\frac{7776}{\pi}\biggl(\frac{26}{25}\biggr)^{\!6}\frac{t}{d^3}\mbox{.}
\end{multline*}

The desired inequality is then obtained because the coefficients of $1/d^3$ and $t/d^3$ in the right-hand side are less than $2267$ and $3132$, respectively.
\end{proof}

\section{The local extremality of diagonal sections}\label{DSH.sec.5}

Throughout this section, $t$ is a fixed non-negative number and $H$ is a hyperplane of $\mathbb{R}^d$ whose distance to the center of $[0,1]^d$ is equal to $t$. Thanks to the symmetries of the hypercube, it can be assumed that
$$
H=\biggl\{x\in\mathbb{R}^d:a\mathord{\cdot}x=\frac{\sigma(a)}{2}-t\biggr\}
$$
where $a$ is a vector in $\mathbb{S}^{d-1}\cap[0,+\infty[^d$ whose sum of coordinates is denoted by $\sigma(a)$. The $(d-1)$-dimensional volume of $H\cap[0,1]^d$ is denoted by $V$ and considered a function of $a$. Note that $H$ is orthogonal to a diagonal of a $n$\nobreakdash-dimensional face $F$ of $[0,1]^d$ precisely when $a$ is a vector with $d-n$ coordinates equal to $0$ and $n$ coordinates equal to $1/\sqrt{n}$. A criterion for the local extremality of $V$ at any such vector $a$ has been given in~\cite{Pournin2024,Pournin2025}. Throughout the section, $f$ and $g$ denote the functions defined for every non-negative number $u$ by
\begin{equation}\label{DSH.sec.5.eq.1}
\left\{\begin{array}{l}
\displaystyle f(u)=2\frac{\sin^2 u}{u^2}-\cos u\frac{\sin u}{u}-1\mbox{ and}\\[\bigskipamount]
\displaystyle g(u)=\cos u +\biggl(\frac{u^2}{3}-1\biggr)\frac{\sin u}{u}\mbox{.}
\end{array}\right.
\end{equation}

The criterion from~\cite{Pournin2024,Pournin2025} (see Theorem~1.1 from \cite{Pournin2025}, Equation (18) from~\cite{Pournin2024}, and Equation~(4) from \cite{Pournin2025}) can be rephrased as follows.

\begin{thm}\label{DSH.sec.5.thm.1}
Consider an integer $n$ satisfying $4\leq{n}\leq{d}$. Denote
\begin{equation}\label{DSH.sec.5.thm.1.eq.1}
r=\int_{0}^{+\infty}\biggl(\frac{\sin\,u}{u}\biggr)^{\!n-2}\!\cos\Bigl(2\sqrt{d}tu\Bigr)f(u)du
\end{equation}
and
\begin{equation}\label{DSH.sec.5.thm.1.eq.0.5}
s=-\int_{0}^{+\infty}\biggl(\frac{\sin\,u}{u}\biggr)^{\!n-1}\!\cos\Bigl(2\sqrt{d}tu\Bigr)g(u)du\mbox{.}
\end{equation}

Assume that $a$ is a vector with $d-n$ coordinates equal to $0$ and whose other $n$ coordinates are equal to $1/\sqrt{n}$. At that vector, $V$ is
\begin{enumerate}
\item[(i)] strictly locally maximal if $n$ coincides with $d$ and $r$ is negative or $n$ is less than $d$ and $r$ and $s$ are both negative,
\item[(ii)] strictly locally minimal if $n$ coincides with $d$ and $r$ is positive or $n$ is less than $d$ and $r$ and $s$ are both positive, or
\item[(iii)] not locally extremal if $n$ is less than $d$ and $r$ and $s$ are opposite.
\end{enumerate}
\end{thm}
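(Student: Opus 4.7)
The plan is to prove Theorem \ref{DSH.sec.5.thm.1} as a translation exercise: the statement is explicitly flagged as a rephrasing of known results from \cite{Pournin2024,Pournin2025}, so the work consists in matching notations and checking that the integrals $r$ and $s$ defined here coincide (up to positive factors) with quantities whose sign already controls local extremality in those references.

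First, I would recall the criterion from Theorem~1.1 of \cite{Pournin2025} together with Equation~(18) of \cite{Pournin2024} and Equation~(4) of \cite{Pournin2025}. These results express the second-order behavior of $V$ at a vector $a$ with $d-n$ zero coordinates and $n$ coordinates equal to $1/\sqrt{n}$ in terms of two types of perturbations: those that keep the support of $a$ (i.e.\ perturbations within the $n$ non-zero coordinates, producing the "intrinsic" second derivative associated with the functions $f$), and those that turn on a previously zero coordinate (producing the "transverse" second derivative associated with the functions $g$). The first perturbation type is always available (and is the only relevant one when $n=d$), whereas the transverse perturbations exist precisely when $n<d$.

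Next, I would verify the algebraic identification of the integrals. The Fourier-type formulas from \cite{Pournin2024,Pournin2025} give the relevant second derivatives of $V$ at $a$ as integrals of the form $\int_0^{+\infty}(\sin u/u)^k\cos(2\sqrt{d}tu)h(u)du$ with $k\in\{n-2,n-1\}$ and with $h$ the specific trigonometric combinations appearing in those papers. A direct simplification shows that these combinations reduce to $f(u)$ in the intrinsic case (matching the integrand in (\ref{DSH.sec.5.thm.1.eq.1})) and to $g(u)$ in the transverse case (matching (\ref{DSH.sec.5.thm.1.eq.0.5}), with the explicit minus sign absorbed into the definition of $s$). Since the cited references show that the sign of the intrinsic derivative is controlled by $r$ alone and the sign of the transverse derivative by $s$ alone, extracting the three cases is immediate: $V$ is strictly locally maximal exactly when every admissible second derivative is negative, strictly locally minimal exactly when every admissible second derivative is positive, and fails to be extremal when an admissible direction of positive curvature and an admissible direction of negative curvature coexist.

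Finally, I would assemble the three cases in the statement: case (i) is $n=d$ with $r<0$ (no transverse perturbations exist, so $r<0$ alone suffices) or $n<d$ with both $r<0$ and $s<0$; case (ii) is the analogous statement with both signs flipped; and case (iii) is $n<d$ with $r$ and $s$ of opposite signs, for which the intrinsic and transverse directions witness curvatures of both signs and preclude local extremality. The main obstacle is purely bookkeeping: one must correctly translate the sign conventions and the normalization factors between \cite{Pournin2024} and \cite{Pournin2025} and the present statement, in particular to see that the sign of the right-hand side in (\ref{DSH.sec.5.thm.1.eq.0.5}) is the correct one. Beyond this notational care, no new analysis is required.
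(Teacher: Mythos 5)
Your proposal matches the paper's treatment exactly: in the paper, Theorem~\ref{DSH.sec.5.thm.1} appears with no proof of its own, only the remark that it rephrases Theorem~1.1 of \cite{Pournin2025} together with Equation~(18) of \cite{Pournin2024} and Equation~(4) of \cite{Pournin2025}, so your plan is precisely the translation exercise that remark envisions. One detail to verify when executing it: the mismatch in exponents ($n-2$ in $r$ but $n-1$ in $s$) hints that the two quantities do not both arise as second derivatives of identical type, so your description of $s$ as a ``transverse second derivative'' should be checked carefully against the cited formulas; your own closing caveat about translating sign conventions and normalizations already covers this.
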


Using this criterion, it is shown in~\cite{Pournin2024} that, if $d$ is at least $4$ and $t$ is small  enough, then $V$ is always strictly locally maximal when $H$ is orthogonal to a diagonal of $[0,1]^d$. In contrast, $V$ is never locally extremal when $H$ is orthogonal to a diagonal of a \emph{proper} face of $[0,1]^d$ of dimension at least $3$ and $t$ is equal to $0$~\cite{AmbrusGargyan2024b}. In this section, these results are extended to a range of values of $t$ away from $0$ and to all fixed $t$ when the dimension goes to infinity. The strategy is similar to the one used in Section \ref{DSH.sec.3}: it consists in combining Theorem~\ref{DSH.sec.5.thm.1} with the results of Section \ref{DSH.sec.4} for dimensions above a certain explicit threshold and by using symbolic computation for dimensions below that threshold. 

Note that according to (\ref{DSH.sec.5.eq.1}), $f$ and $g$ are analytic functions of $u$ on $\mathbb{R}$ and, by order $6$ Taylor series approximations around $0$,
\begin{equation}\label{DSH.sec.5.eq.2}
\left\{\begin{array}{l}
\displaystyle f(u)=-\frac{2u^4}{45}+O(u^6)\mbox{ and}\\[\bigskipamount]
\displaystyle g(u)=-\frac{u^4}{45}+O(u^6)\mbox{.}
\end{array}\right.
\end{equation}

Bounding the error term in these equalities provides the following.

\begin{prop}\label{DSH.sec.5.prop.1}
For every number $u$ satisfying $0\leq{u}\leq1$,
$$
\Biggl|f(u)+\frac{2u^4}{45}\Biggr|\leq\frac{2u^6}{315}
$$
and
$$
\Biggl|g(u)+\frac{u^4}{45}\Biggr|\leq\frac{u^6}{630}\mbox{.}
$$

\end{prop}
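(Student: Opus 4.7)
The plan is to derive both inequalities from the power series expansions of $f$ and $g$ at the origin, combined with the Leibniz bound for alternating series.

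First, using the identities $2\sin^2 u=1-\cos(2u)$ and $2\sin u\cos u=\sin(2u)$, one can rewrite
$$
f(u)=\frac{1-\cos(2u)}{u^2}-\frac{\sin(2u)}{2u}-1\mbox{.}
$$
Expanding each term as a power series in $u$, collecting powers of $u$, and observing that the constant and $u^2$ contributions cancel yields the closed form
$$
f(u)=\sum_{n\geq 2}(-1)^{n+1}\frac{2^{2n+1}(n-1)}{(2n+2)!}u^{2n}
$$
for every real number $u$. An analogous computation, starting from $g(u)=\cos u+u\sin u/3-\sin u/u$ and expanding each term as a power series, gives
$$
g(u)=\sum_{n\geq 2}(-1)^{n+1}\frac{4n(n-1)}{3(2n+1)!}u^{2n}\mbox{.}
$$
The $n=2$ terms of these series are $-2u^4/45$ and $-u^4/45$, matching (\ref{DSH.sec.5.eq.2}). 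Subtracting them leaves power series for $f(u)+2u^4/45$ and $g(u)+u^4/45$ whose leading summands are $2u^6/315$ and $u^6/630$, respectively.

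It then remains to verify that each of these tails is an alternating series whose terms decrease in absolute value on $[0,1]$. For the series of $f$, the ratio of the $(n+1)$-th term to the $n$-th in absolute value is
$$
\frac{4n}{(n-1)(2n+3)(2n+4)}u^2\mbox{,}
$$
which does not exceed $1/15$ for any integer $n$ at least $3$ and any $u$ in $[0,1]$. The analogous ratio for the series of $g$ is
$$
\frac{n+1}{(n-1)(2n+2)(2n+3)}u^2
$$
and does not exceed $1/36$ under the same assumptions. Hence both series satisfy the hypotheses of the Leibniz bound on $[0,1]$, so each is bounded in absolute value by the absolute value of its first term. This yields $|f(u)+2u^4/45|\leq 2u^6/315$ and $|g(u)+u^4/45|\leq u^6/630$, as required.

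The only non-routine step is identifying the closed form of the general coefficient of each series; the verification of the alternating and decreasing properties on $[0,1]$ is then an immediate algebraic check. I do not anticipate any genuine obstacle.
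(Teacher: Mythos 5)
Your proof is correct, but it takes a genuinely different route from the paper's. The paper introduces the auxiliary analytic functions $\tilde{f}(u)=f(u)/u^4$ and $\tilde{g}(u)=g(u)/u^4$ (extended by continuity at the origin), writes order~$2$ Taylor approximations of these with Lagrange-form remainders, and then bounds the second derivatives of $\tilde{f}$ and $\tilde{g}$ explicitly on $[0,1]$ by deriving closed-form expressions for those derivatives and squeezing $\cos$ and $\sin$ between partial sums of their Taylor series. Your approach instead computes the full power series of $f$ and $g$ in closed form, observes that the tails $f(u)+2u^4/45$ and $g(u)+u^4/45$ are alternating series on $[0,1]$ whose term magnitudes decrease, and invokes the Leibniz bound to dominate each tail by its leading term. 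Your coefficient formulas are correct, the ratio computations are correct, and the bounds $1/15$ and $1/36$ are attained at $n=3$ and monotonically decrease thereafter (a straightforward cross-multiplication check that you should state explicitly, since the ratio expressions in $n$ are not obviously monotone at a glance). Your method avoids computing and bounding the messy second derivatives of the divided functions, at the cost of having to identify the general coefficient, and it yields the sharp constants $2/315$ and $1/630$ more directly than the paper's argument; both are valid.
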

\begin{proof}
Since $f$ is analytic on $\mathbb{R}$ and given (\ref{DSH.sec.5.eq.2}), the function $\tilde{f}$ such that $\tilde{f}(u)$ is equal to $-2/45$ when $u$ is equal to $0$ and to $f(u)/u^4$ otherwise is also an analytic function of $u$ on $\mathbb{R}$. Likewise, the function $\tilde{g}$ such that $\tilde{g}(0)$ is equal to $-1/45$ and $\tilde{g}(u)$ is equal to $g(u)/u^4$ when $u$ is non-zero is analytic on $\mathbb{R}$.

According to~(\ref{DSH.sec.5.eq.2}), order $2$ Taylor series approximations of $\tilde{f}$ and $\tilde{g}$ around $0$ with Lagrange form remainders yield
\begin{equation}\label{DSH.sec.5.prop.1.eq.0}
\left\{\begin{array}{l}
\displaystyle\tilde{f}(u)=-\frac{2}{45}+\frac{d^2\tilde{f}}{du^2}(\xi)\frac{u^2}{2}\mbox{ and}\\[\bigskipamount]
\displaystyle\tilde{g}(u)=-\frac{1}{45}+\frac{d^2\tilde{g}}{du^2}(\eta)\frac{u^2}{2}\mbox{.}
\end{array}\right.
\end{equation}
for every number $u$ in $[0,1]$, where $\xi$ and $\eta$ belong to $[0,u]$.

The second derivatives of $\tilde{f}$ and $\tilde{g}$ are
\begin{equation}\label{DSH.sec.5.prop.1.eq.1}
\frac{d^2\tilde{f}}{du^2}(u)=\frac{42-20u^2-14(3-u^2)\cos2u-(39-2u^2)u\sin2u}{u^8}
\end{equation}
and
\begin{equation}\label{DSH.sec.5.prop.1.eq.2}
\frac{d^2\tilde{g}}{du^2}(u)=\frac{9(10-u^2)u\cos u-(90-39u^2+u^4)\sin u}{3u^7}\mbox{.}
\end{equation}

However, recall that if $u$ is a non-negative number,
\begin{equation}\label{DSH.sec.5.prop.1.eq.3}
1-\frac{u^2}{2}+\frac{u^4}{4!}-\frac{u^6}{6!}+\frac{u^8}{8!}-\frac{u^{10}}{10!}\leq\cos u\leq1-\frac{u^2}{2}+\frac{u^4}{4!}-\frac{u^6}{6!}+\frac{u^8}{8!}
\end{equation}
and
\begin{equation}\label{DSH.sec.5.prop.1.eq.4}
u-\frac{u^3}{6}+\frac{u^5}{5!}-\frac{u^7}{7!}+\frac{u^9}{9!}-\frac{u^{11}}{11!}\leq\sin u\leq u-\frac{u^3}{6}+\frac{u^5}{5!}-\frac{u^7}{7!}+\frac{u^9}{9!}\mbox{.}
\end{equation}

Since $-14(3-u^2)$ and $-(39-2u^2)$ are both negative when $0\leq{u}\leq1$, bounding $\cos 2u$ and $\sin 2u$ in (\ref{DSH.sec.5.prop.1.eq.1}) according to (\ref{DSH.sec.5.prop.1.eq.3}) and (\ref{DSH.sec.5.prop.1.eq.4}) shows that
$$
\frac{8(99u^2-17u^4+2u^6)}{155925}\leq\frac{4}{315}-\frac{d^2\tilde{f}}{du^2}(u)\leq\frac{8(6u^2-u^4)}{2835}
$$
when $0\leq{u}\leq1$. Note that $6u^2-u^4$ and $99u^2-17u^4+2u^6$ are increasing functions of $u$ on the interval $[0,1]$. Hence if $0\leq{u}\leq1$, then
\begin{equation}\label{DSH.sec.5.prop.1.eq.5}
-\frac{4}{2835}\leq\frac{d^2\tilde{f}}{du^2}(u)\leq\frac{4}{315}\mbox{.}
\end{equation}

Likewise, $9(10-u^2)$ is positive and $-(90-39u^2+u^4)$ negative when $0\leq{u}\leq1$. Hence, bounding $\cos u$ and $\sin u$ in (\ref{DSH.sec.5.prop.1.eq.2}) via (\ref{DSH.sec.5.prop.1.eq.3}) and (\ref{DSH.sec.5.prop.1.eq.4}) shows that
$$
\frac{63360u^2-3390u^4+149u^6-u^8}{119750400}\leq\frac{1}{315}-\frac{d^2\tilde{g}}{du^2}(u)\leq\frac{5760u^2-210u^4+u^6}{10886400}\mbox{.}
$$

Again, $5760u^2-210u^4+u^6$ and $63360u^2-3390u^4+149u^6-u^8$ are decreasing functions of $u$ on $[0,1]$. Hence, for every number $u$ satisfying $0\leq{u}\leq1$,
\begin{equation}\label{DSH.sec.5.prop.1.eq.6}
\frac{29009}{10886400}\leq\frac{d^2\tilde{g}}{du^2}(u)\leq\frac{1}{315}\mbox{.}
\end{equation}

Using (\ref{DSH.sec.5.prop.1.eq.5}) and (\ref{DSH.sec.5.prop.1.eq.6}) to bound the second derivatives of $\tilde{f}$ at $\xi$ and of $\tilde{g}$ at $\eta$ in (\ref{DSH.sec.5.prop.1.eq.0}) shows that for every number $u$ satisfying $0\leq{u}\leq1$,
$$
\biggl|\tilde{f}(u)+\frac{2}{45}\biggr|\leq\frac{2u^2}{315}
$$
and
$$
\Biggl|\tilde{g}(u)+\frac{1}{45}\Biggr|\leq\frac{u^2}{630}\mbox{.}
$$

Multiplying these inequalities by $u^4$ completes the proof.
\end{proof}

Proposition \ref{DSH.sec.5.prop.1} states that $f(u)$ and $g(u)$ roughly behave like $u^4$ up to a multiplicative constant when $u$ is close to $0$. This makes it possible to estimate the right-hand sides of (\ref{DSH.sec.5.thm.1.eq.1}) and (\ref{DSH.sec.5.thm.1.eq.0.5}) using Corollary \ref{DSH.sec.4.cor.1}. In fact, Proposition~\ref{DSH.sec.5.prop.1} will not be used directly, but rather the following consequence.

\begin{lem}\label{DSH.sec.5.lem.3}
For every number $u$ satisfying $0\leq{u}\leq1$,
$$
\Biggl|\frac{u^2f(u)}{\sin^2u}+\frac{2u^4}{45}\Biggr|\leq\frac{242u^6}{7875}
$$
and
$$
\Biggl|\frac{ug(u)}{\sin u}+\frac{u^4}{45}\Biggr|\leq\frac{2u^6}{315}\mbox{.}
$$
\end{lem}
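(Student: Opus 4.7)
The plan is to reduce each inequality to the bounds of Proposition \ref{DSH.sec.5.prop.1} via the algebraic identities
$$\frac{u^2f(u)}{\sin^2 u}+\frac{2u^4}{45}=\frac{u^2}{\sin^2 u}\biggl(f(u)+\frac{2u^4}{45}\biggr)+\frac{2u^4}{45}\biggl(1-\frac{u^2}{\sin^2 u}\biggr)$$
and
$$\frac{ug(u)}{\sin u}+\frac{u^4}{45}=\frac{u}{\sin u}\biggl(g(u)+\frac{u^4}{45}\biggr)+\frac{u^4}{45}\biggl(1-\frac{u}{\sin u}\biggr).$$
Each decomposition splits the quantity to be bounded into a term that Proposition \ref{DSH.sec.5.prop.1} controls directly, multiplied by a harmless factor $u/\sin u$ or $u^2/\sin^2 u$, plus a correction measuring the deviation of $u/\sin u$ from $1$.

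All the remaining work reduces to a single elementary ingredient, the inequality $\sin u\geq u-u^3/6\geq5u/6$ on $[0,1]$, the last step being valid since $u\leq 1$. From this I would deduce the pointwise bounds $u/\sin u\leq6/5$ and $u^2/\sin^2 u\leq 36/25$. Writing
$$\frac{u}{\sin u}-1=\frac{u-\sin u}{\sin u}\quad\mbox{and}\quad\frac{u^2}{\sin^2 u}-1=\frac{u^2-\sin^2 u}{\sin^2 u}\mbox{,}$$
and combining the lower bound $\sin^2 u\geq25u^2/36$ with $u-\sin u\leq u^3/6$ and $u^2-\sin^2 u\leq u^4/3$ (the latter from squaring $\sin u\geq u-u^3/6$), I obtain
$$\biggl|\frac{u}{\sin u}-1\biggr|\leq\frac{u^2}{5}\quad\mbox{and}\quad\biggl|\frac{u^2}{\sin^2 u}-1\biggr|\leq\frac{12u^2}{25}$$
on $[0,1]$.

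Feeding these estimates and Proposition \ref{DSH.sec.5.prop.1} into the two identities above and applying the triangle inequality then produces, by a short numerical computation, the bounds
$$\biggl|\frac{u^2f(u)}{\sin^2 u}+\frac{2u^4}{45}\biggr|\leq\frac{36}{25}\cdot\frac{2u^6}{315}+\frac{2u^4}{45}\cdot\frac{12u^2}{25}=\frac{240u^6}{7875}$$
and
$$\biggl|\frac{ug(u)}{\sin u}+\frac{u^4}{45}\biggr|\leq\frac{6}{5}\cdot\frac{u^6}{630}+\frac{u^4}{45}\cdot\frac{u^2}{5}=\frac{u^6}{525}+\frac{u^6}{225}=\frac{2u^6}{315}\mbox{.}$$
Since $240/7875<242/7875$, both desired inequalities follow. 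There is no real obstacle: the second inequality comes out exactly as stated, while the first one is reached with a little room to spare, so the coefficient $242$ in the statement is slightly loose compared to what the approach produces.
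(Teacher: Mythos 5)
Your proof is correct. The computation checks out: $\frac{36}{25}\cdot\frac{2}{315}+\frac{2}{45}\cdot\frac{12}{25}=\frac{72+168}{7875}=\frac{240}{7875}$ for the first bound, and $\frac{6}{5}\cdot\frac{1}{630}+\frac{1}{45}\cdot\frac{1}{5}=\frac{1}{525}+\frac{1}{225}=\frac{2}{315}$ for the second, both valid.

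The route is genuinely different from the paper's. The paper decomposes via
$$\frac{u^2f(u)}{\sin^2u}+\frac{2u^4}{45}=\biggl(\frac{u^2}{\sin^2u}-1\biggr)f(u)+\biggl(f(u)+\frac{2u^4}{45}\biggr)\mbox{,}$$
so the correction term carries a factor of $|f(u)|$, which it bounds by $\bigl(\frac{2}{45}+\frac{2}{315}\bigr)u^4=\frac{16u^4}{315}$ via Proposition \ref{DSH.sec.5.prop.1}. You instead factor out $u^2/\sin^2u$ from the main term and keep the exact polynomial $\frac{2u^4}{45}$ in the correction, which is smaller than the paper's bound on $|f(u)|$. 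That exchange costs you a factor $36/25$ on the small $O(u^6)$ error term but saves on the larger correction term, netting a slightly sharper constant $240/7875$ versus the paper's $242/7875$. The estimates on $u/\sin u-1$ and $u^2/\sin^2u-1$ come out to the same $u^2/5$ and $12u^2/25$ in both arguments, reached through essentially the same inequality $\sin u\geq u-u^3/6$; your intermediate step $\sin u\geq 5u/6$ is just a convenient repackaging of the paper's evaluation of $u^2/(6-u^2)$ at $u=1$. Both approaches are of comparable length and rely only on Proposition \ref{DSH.sec.5.prop.1} plus elementary trigonometric bounds, so neither is structurally simpler; yours is marginally tighter, the paper's produces the constant $242$ that actually appears in the statement.
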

\begin{proof}
According to the triangle inequality and to Proposition \ref{DSH.sec.5.prop.1}, it suffices to prove that, for every number $u$ satisfying $0\leq{u}\leq1$,
\begin{equation}\label{DSH.sec.5.lem.3.eq.1}
\Biggl|\biggl(\frac{u^2}{\sin^2u}-1\biggr)f(u)\Biggr|\leq\frac{64u^6}{2625}
\end{equation}
and
\begin{equation}\label{DSH.sec.5.lem.3.eq.1.5}
\Biggl|\biggl(\frac{u}{\sin u}-1\biggr)g(u)\Biggr|\leq\frac{u^6}{210}
\end{equation}

Consider such a number $u$ and recall that
$$
1-\frac{u^2}{6}\leq\frac{\sin u}{u}\leq1\mbox{.}
$$

As a consequence,
$$
\left\{\begin{array}{l}
\displaystyle0\leq\frac{u}{\sin u}-1\leq\frac{u^2}{6-u^2}\mbox{ and}\\[\bigskipamount]
\displaystyle0\leq\frac{u^2}{\sin^2u}-1\leq\frac{12-u^2}{(6-u^2)^2}u^2\mbox{.}
\end{array}\right.
$$

Since $u$ belongs to $[0,1]$, it follows that
\begin{equation}\label{DSH.sec.5.lem.3.eq.2}
\left\{\begin{array}{l}
\displaystyle\Biggl|\biggl(\frac{u^2}{\sin^2u}-1\biggr)f(u)\Biggr|\leq\frac{12u^2}{25}|f(u)|\mbox{ and}\\[1.5\bigskipamount]
\displaystyle\Biggl|\biggl(\frac{u}{\sin u}-1\biggr)g(u)\Biggr|\leq\frac{u^2}{5}|g(u)|\mbox{.}
\end{array}\right.
\end{equation}

However, according to Proposition \ref{DSH.sec.5.prop.1},
\begin{equation}\label{DSH.sec.5.lem.3.eq.3}
|f(u)|\leq\biggl(\frac{2}{45}+\frac{2}{315}\biggr)u^4\mbox{.}
\end{equation}
and
\begin{equation}\label{DSH.sec.5.lem.3.eq.4}
|g(u)|\leq\biggl(\frac{1}{45}+\frac{1}{630}\biggr)u^4\mbox{.}
\end{equation}

Therefore, as
$$
\frac{12}{25}\biggl(\frac{2}{45}+\frac{2}{315}\biggr)=\frac{64}{2625}
$$
and
$$
\frac{1}{5}\biggl(\frac{1}{45}+\frac{1}{630}\biggr)=\frac{1}{210}\mbox{,}
$$
one obtains the desired inequalities from (\ref{DSH.sec.5.lem.3.eq.2}), (\ref{DSH.sec.5.lem.3.eq.3}), and (\ref{DSH.sec.5.lem.3.eq.4}).
\end{proof}

Using this lemma, one can prove the following estimates.

\begin{lem}\label{DSH.sec.5.lem.2}
If $n$ is at least $118$ then, for every non-negative number $t$,
\begin{equation}\label{DSH.sec.5.lem.2.eq.6}
\Biggl|\frac{2\sqrt{n}}{\pi}\!\int_0^{+\infty}\biggl(\frac{\sin\,u}{u}\biggr)^{\!n-2}\!\cos\Bigl(2\sqrt{d}tu\Bigr)f(u)du+\frac{2}{45}I_{n,4}(t)\Biggr|<\frac{21}{n^3}
\end{equation}
and
\begin{equation}\label{DSH.sec.5.lem.2.eq.6.1}
\Biggl|\frac{2\sqrt{n}}{\pi}\!\int_0^{+\infty}\biggl(\frac{\sin\,u}{u}\biggr)^{\!n-1}\!\cos\Bigl(2\sqrt{d}tu\Bigr)g(u)du+\frac{1}{45}I_{n,4}(t)\Biggr|<\frac{6}{n^3}
\end{equation}
\end{lem}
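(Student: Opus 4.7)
The plan is to approximate the integrands near $u=0$ by a polynomial of degree $4$ in $u$ (with the right prefactor) and to reduce the main term to the integral (\ref{DSH.sec.4.eq.0}) defining $I_{n,4}(t)$. For the first inequality, rewrite the integrand using
$$
\biggl(\frac{\sin u}{u}\biggr)^{\!n-2}\!f(u)=\biggl(\frac{\sin u}{u}\biggr)^{\!n}\!\frac{u^2 f(u)}{\sin^2 u}
$$
and split the integration at $u=1$. On $[0,1]$, Lemma \ref{DSH.sec.5.lem.3} gives $u^2 f(u)/\sin^2 u=-2u^4/45+\varepsilon(u)$ with $|\varepsilon(u)|\leq 242u^6/7875$, so that the $[0,1]$ portion equals $-\tfrac{2}{45}$ times the corresponding integral with $u^4$ in place of $f(u)$, plus an error controlled by $\int_0^1(\sin u/u)^n u^6 du$. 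The second inequality is handled in the same way via $(\sin u/u)^{n-1} g(u)=(\sin u/u)^n\cdot ug(u)/\sin u$ and the second bound in Lemma \ref{DSH.sec.5.lem.3}.

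Next, I would complete $\int_0^1(\sin u/u)^n\cos(2\sqrt{d}tu)u^4 du$ to an integral over $[0,+\infty)$ (returning $\tfrac{\pi}{2\sqrt n}I_{n,4}(t)$) by subtracting the tail over $[1,+\infty)$. By Lemma \ref{DSH.sec.1.lem.0} this completion tail is of order $\sqrt{n}(\sin 1)^n$, hence exponentially small. The original tail $\int_1^{+\infty}(\sin u/u)^{n-2}\cos(2\sqrt{d}tu)f(u)du$ is bounded the same way using the uniform estimate $|f(u)|\leq 4$ on $[1,+\infty)$; for the second inequality, $|g(u)|\leq 1+u/3+1/u$ is dominated by a linear function of $u$, so Lemma \ref{DSH.sec.1.lem.0} applied with $k=0$ and $k=1$ still produces a bound of order $\sqrt n(\sin 1)^n$.

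For the dominant $[0,1]$ remainder I would use the pointwise inequality $\sin u/u\leq e^{-u^2/6}$ on $[0,1]$ (equivalent to $\phi(u)\geq u$; this is easily verified from the Taylor expansion of $\log(\sin u/u)$ and was essentially already exploited through the change of variables $u=\psi(x)$ in Section \ref{DSH.sec.1}) to bound
$$
\frac{2\sqrt n}{\pi}\!\int_0^1\!\biggl(\frac{\sin u}{u}\biggr)^{\!n}\!u^6 du\leq\frac{2\sqrt n}{\pi}\!\int_0^{+\infty}\!e^{-nu^2/6}u^6 du=\frac{2}{\pi n^3}\!\int_0^{+\infty}\!e^{-y^2/6}y^6 dy\mbox{,}
$$
which is explicit via (\ref{DSH.sec.1.thm.0.eq.1.4}) and of order $n^{-3}$. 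Multiplying by $242/7875$ and adding the exponentially small tails gives an error strictly below $21/n^3$ as soon as $n\geq 118$. The same argument with constant $2/315$ in place of $242/7875$ yields the $6/n^3$ bound for the second inequality.

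The main obstacle is the careful tracking of constants required to confirm that the threshold $n=118$ is actually sufficient, rather than merely asymptotically: one has to check that the sum of the leading-order $u^6$-error, the completion tail, and the original tail on $[1,+\infty)$ sits below $21/n^3$ and $6/n^3$ respectively for every $n\geq 118$. This mirrors the careful numerical thresholding used in the proof of Theorem \ref{DSH.sec.1.thm.0}, and it is the part where the precision of the bounds in Proposition \ref{DSH.sec.3.prop.1} and Lemma \ref{DSH.sec.5.lem.3} really gets used.
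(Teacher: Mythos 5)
Your proposal is correct and follows essentially the same route as the paper's proof: split the integral at a threshold near $1$ (you use $1$, the paper uses $\psi(1)$), substitute $(\sin u/u)^{n-2}f(u) = (\sin u/u)^n\cdot u^2f(u)/\sin^2 u$ and $(\sin u/u)^{n-1}g(u) = (\sin u/u)^n\cdot ug(u)/\sin u$, invoke Lemma~\ref{DSH.sec.5.lem.3} for the $[0,1]$ part, control the $u^6$ error integral at order $n^{-3}$, and dispose of all the tails via Lemma~\ref{DSH.sec.1.lem.0} as exponentially small. The two minor technical differences are worth noting. First, you bound $\int_0^1(\sin u/u)^n u^6\,du$ directly via the pointwise inequality $\sin u/u\le e^{-u^2/6}$ and then use (\ref{DSH.sec.1.thm.0.eq.1.4}); the paper instead changes variables $u=\psi(x)$ and invokes the numerical bound on $d\psi/dx$ from Proposition~\ref{DSH.sec.3.prop.1}, which produces an extra harmless factor of $1.00001^7$. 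Your route is a touch cleaner since the pointwise inequality is equivalent to $\psi(x)\le x$ and follows from the sign of the Taylor coefficients of $\log(\sin u/u)$. Second, for the $g$-tail you bound $|g(u)|\le 1+u/3+1/u$ and apply Lemma~\ref{DSH.sec.1.lem.0} with $k=0,1$ to $(\sin u/u)^{n-1}$, whereas the paper absorbs one power of $\sin u/u$ into $g$, uses the uniform bound $|(\sin u/u)g(u)|\le 7/3$, and applies the lemma at $k=0$ with exponent $n-2$; both give exponentially small contributions. These variations do not change the constants at leading order (both give roughly $560/n^3$ for $(2\sqrt n/\pi)\int_0^1(\sin u/u)^n u^6\,du$, hence roughly $17.2/n^3$ and $3.6/n^3$ after multiplying by $242/7875$ and $2/315$), so the thresholds $21/n^3$ and $6/n^3$ for $n\ge 118$ are reached by the same kind of careful numerical bookkeeping you rightly flag as the substantive part of the argument.
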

\begin{proof}
Consider an integer $n$ at least $4$ and denote
$$
L=\!\int_0^{\psi(1)}\biggl(\frac{\sin\,u}{u}\biggr)^{\!n}u^6du\mbox{.}
$$

Recall that $0<\psi(1)<1$. Hence, by Lemma \ref{DSH.sec.5.lem.3} and the triangle inequality,
\begin{equation}\label{DSH.sec.5.lem.2.eq.1.5}
\Biggl|\frac{2\sqrt{n}}{\pi}\!\int_0^{\psi(1)}\biggl(\frac{\sin\,u}{u}\biggr)^{\!n-2}\!\cos\Bigl(2\sqrt{d}tu\Bigr)f(u)du+\frac{2}{45}F_{n,4}(t)\Biggr|\leq\frac{484\sqrt{n}}{7875\pi}L
\end{equation}
and
\begin{equation}\label{DSH.sec.5.lem.2.eq.1.6}
\Biggl|\frac{2\sqrt{n}}{\pi}\!\int_0^{\psi(1)}\biggl(\frac{\sin\,u}{u}\biggr)^{\!n-1}\!\cos\Bigl(2\sqrt{d}tu\Bigr)g(u)du+\frac{1}{45}F_{n,4}(t)\Biggr|\leq\frac{4\sqrt{n}}{315\pi}L
\end{equation}
where $F_{n,k}(t)$ is the quantity whose expression is given by (\ref{DSH.sec.4.eq.0.5}). However, one obtains by the change of variables $u=\psi(x)$ that
\begin{equation}\label{DSH.sec.5.lem.2.eq.2}
\frac{\sqrt{n}}{\pi}L=\frac{\sqrt{n}}{\pi}\!\int_0^1e^{-nx^2\!/6}\psi^6(x)\frac{d\psi}{dx}(x)dx\mbox{.}
\end{equation}

Recall that $\psi$ is a continuously differentiable function of $x$ in the interval $[0,1]$. By Proposition \ref{DSH.sec.3.prop.1}, when $x$ belongs to that interval,
$$
\biggl|\frac{d\psi}{dx}(x)\biggr|\leq1.00001
$$
and in turn, as $\psi(0)$ is equal to $0$,
$$
|\psi(x)|\leq1.00001x
$$

As a consequence one obtains, by bounding $\psi$ and its derivative in the integrand of the right-hand side of (\ref{DSH.sec.5.lem.2.eq.2}), that
$$
\frac{\sqrt{n}}{\pi}L\leq\frac{1.00001^7\sqrt{n}}{\pi}\!\int_0^1e^{-nx^2\!/6}x^6dx\mbox{.}
$$

In turn, according to (\ref{DSH.sec.1.thm.0.eq.1.375}) and (\ref{DSH.sec.1.thm.0.eq.1.4}),
$$
\frac{\sqrt{n}}{\pi}L\leq120\frac{1.00001^7}{\sqrt{\pi}n^3}\sqrt{\frac{3}{2}}^7\mbox{.}
$$

Observe that the coefficient of $1/n^3$ in the right-hand side of this inequality is less than $280$. Therefore, combining it with (\ref{DSH.sec.5.lem.2.eq.1.5}) yields
\begin{equation}\label{DSH.sec.5.lem.2.eq.1.7}
\Biggl|\frac{2\sqrt{n}}{\pi}\!\int_0^{\psi(1)}\biggl(\frac{\sin\,u}{u}\biggr)^{\!n-2}\!\cos\Bigl(2\sqrt{d}tu\Bigr)f(u)du+\frac{2}{45}F_{n,4}(t)\Biggr|\leq\frac{3872}{225n^3}
\end{equation}
and combining it with (\ref{DSH.sec.5.lem.2.eq.1.6}) yields
\begin{equation}\label{DSH.sec.5.lem.2.eq.1.8}
\Biggl|\frac{2\sqrt{n}}{\pi}\!\int_0^{\psi(1)}\biggl(\frac{\sin\,u}{u}\biggr)^{\!n-1}\!\cos\Bigl(2\sqrt{d}tu\Bigr)g(u)du+\frac{1}{45}F_{n,4}(t)\Biggr|\leq\frac{32}{9n^3}\mbox{.}
\end{equation}

It follows from (\ref{DSH.sec.5.eq.1}) that $|f(u)|$ is at most $4$ and that
$$
\biggl|\frac{\sin u}{u}g(u)\biggr|\leq\frac{7}{3}\mbox{.}
$$
Hence, by the triangle inequality,
\begin{equation}\label{DSH.sec.5.lem.2.eq.4}
\Biggl|\frac{2\sqrt{n}}{\pi}\!\int_{\psi(1)}^{+\infty}\biggl(\frac{\sin\,u}{u}\biggr)^{\!n-2}\!\cos\Bigl(2\sqrt{d}tu\Bigr)f(u)du\Biggr|\leq\frac{8\sqrt{n}}{\pi}\!\int_{\psi(1)}^{+\infty}\biggl|\frac{\sin\,u}{u}\biggr|^{n-2}du
\end{equation}
and
\begin{equation}\label{DSH.sec.5.lem.2.eq.4.5}
\Biggl|\frac{2\sqrt{n}}{\pi}\!\int_{\psi(1)}^{+\infty}\biggl(\frac{\sin\,u}{u}\biggr)^{\!n-1}\!\cos\Bigl(2\sqrt{d}tu\Bigr)g(u)du\Biggr|\leq\frac{14\sqrt{n}}{3\pi}\!\int_{\psi(1)}^{+\infty}\biggl|\frac{\sin\,u}{u}\biggr|^{n-2}du\mbox{.}
\end{equation}

However, using the same strategy as for Lemma \ref{DSH.sec.1.lem.0} and by (\ref{DSH.sec.1.eq.4.5}),
$$
\int_{\psi(1)}^{+\infty}\biggl|\frac{\sin\,u}{u}\biggr|^{n-2}du\leq\pi\biggl(\frac{17}{20}\biggr)^{\!n-2}
$$
and as the right-hand side of this inequality is at most $\pi/(8n^{7/2})$ when $n$ is at least $118$, it follows from (\ref{DSH.sec.5.lem.2.eq.4}), (\ref{DSH.sec.5.lem.2.eq.4.5}), from the triangle inequality that
\begin{equation}\label{DSH.sec.5.lem.2.eq.5}
\Biggl|\frac{2\sqrt{n}}{\pi}\!\int_{\psi(1)}^{+\infty}\biggl(\frac{\sin\,u}{u}\biggr)^{\!n-2}\!\cos\Bigl(2\sqrt{d}tu\Bigr)f(u)du\Biggr|\leq\frac{1}{n^3}\mbox{.}
\end{equation}
and
\begin{equation}\label{DSH.sec.5.lem.2.eq.5.5}
\Biggl|\frac{2\sqrt{n}}{\pi}\!\int_{\psi(1)}^{+\infty}\biggl(\frac{\sin\,u}{u}\biggr)^{\!n-1}\!\cos\Bigl(2\sqrt{d}tu\Bigr)g(u)du\Biggr|\leq\frac{1}{n^3}\mbox{.}
\end{equation}
for any such value of $n$. Now observe that by Lemma \ref{DSH.sec.4.lem.0}, when $n$ is at least $6$,
$$
\biggl|\frac{1}{45}I_{n,4}(t)-\frac{1}{45}F_{n,4}(t)\biggr|\leq\frac{2\sqrt{n}}{45}\biggl(\frac{\pi}{2}\biggr)^{\!4}\biggl(\frac{17}{20}\biggr)^{\!n}
$$
and the right-hand side is at most $1/n^3$ when $n$ is at least $89$. Therefore,
\begin{equation}\label{DSH.sec.5.lem.2.eq.5.1}
\biggl|\frac{1}{45}I_{n,4}(t)-\frac{1}{45}F_{n,4}(t)\biggr|\leq\frac{1}{n^3}
\end{equation}
for any such value of $n$. Combining (\ref{DSH.sec.5.lem.2.eq.1.7}), (\ref{DSH.sec.5.lem.2.eq.5}), and (\ref{DSH.sec.5.lem.2.eq.5.1}) using the triangle inequality shows that when $n$ is at least $118$,
$$
\Biggl|\frac{2\sqrt{n}}{\pi}\!\int_0^{+\infty}\biggl(\frac{\sin\,u}{u}\biggr)^{\!n-2}\!\cos\Bigl(2\sqrt{d}tu\Bigr)f(u)du+\frac{2}{45}I_{n,4}(t)\Biggr|\leq\biggl(\frac{3872}{225}+3\biggr)\frac{1}{n^3}\mbox{.}
$$

Since the coefficient of $1/n^3$ in the right-hand side is less than $21$, this establishes (\ref{DSH.sec.5.lem.2.eq.6}). Finally, combining (\ref{DSH.sec.5.lem.2.eq.1.8}), (\ref{DSH.sec.5.lem.2.eq.5.5}) and (\ref{DSH.sec.5.lem.2.eq.5.1}) by means of the triangle inequality shows that if $n$ is at least $118$, then
$$
\Biggl|\frac{2\sqrt{n}}{\pi}\!\int_0^{+\infty}\biggl(\frac{\sin\,u}{u}\biggr)^{\!n-1}\!\cos\Bigl(2\sqrt{d}tu\Bigr)g(u)du+\frac{1}{45}I_{n,4}(t)\Biggr|\leq\biggl(\frac{32}{9}+2\biggr)\frac{1}{n^3}\mbox{.}
$$

Since $32/9$ is less than $4$, this completes the proof.
\end{proof}

The following lemma provides the sign of (\ref{DSH.sec.5.thm.1.eq.1}) when $n$ is larger than an explicit function of $t$. Recall that the two numbers $\gamma^-$ and $\gamma^+$ appearing in the statement of this lemma are the non-negative values of $t$ such that $1-24t^2+48t^4$ vanishes. An explicit expression for them is given by (\ref{DSH.sec.0.eq.0}).

\begin{lem}\label{DSH.sec.5.lem.4}
Consider a non-negative number $t$ other than $\gamma^-$ and $\gamma^+$. If
\begin{equation}\label{DSH.sec.5.lem.2.eq.1}
n\geq\max\biggl\{124,e^{6t^2}\frac{74+84t}{\bigl|1-24t^2+48t^4\bigr|}\biggr\}\mbox{,}
\end{equation}
then the right-hand side of (\ref{DSH.sec.5.thm.1.eq.1}) is negative when $t<\gamma^-$ or $\gamma^+<t$ and positive when $\gamma^-<t<\gamma^+$ while the right-hand side of (\ref{DSH.sec.5.thm.1.eq.0.5})  is positive when $t<\gamma^-$ or $\gamma^+<t$ and negative when $\gamma^-<t<\gamma^+$.
\end{lem}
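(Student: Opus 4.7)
The plan is to combine the two estimates already proven in the excerpt, namely Lemma~\ref{DSH.sec.5.lem.2} and Corollary~\ref{DSH.sec.4.cor.1}, in order to obtain an asymptotic expression for the integrals~(\ref{DSH.sec.5.thm.1.eq.1}) and~(\ref{DSH.sec.5.thm.1.eq.0.5}) whose leading term is proportional to $(1-24t^2+48t^4)/(n^{5/2}e^{6t^2})$. From there, the signs claimed in the statement will be immediate from the location of the roots $\gamma^\pm$ of this polynomial, and the threshold on $n$ will arise from insisting that the combined error term be strictly smaller than the leading term in absolute value.

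More precisely, let $r$ denote the right-hand side of~(\ref{DSH.sec.5.thm.1.eq.1}) and $s$ that of~(\ref{DSH.sec.5.thm.1.eq.0.5}). Dividing the two inequalities of Lemma~\ref{DSH.sec.5.lem.2} by $2\sqrt{n}/\pi$ gives
$$
\Biggl|r+\frac{\pi I_{n,4}(t)}{45\sqrt{n}}\Biggr|<\frac{21\pi}{2n^{7/2}}\quad\text{and}\quad\Biggl|s-\frac{\pi I_{n,4}(t)}{90\sqrt{n}}\Biggr|<\frac{3\pi}{n^{7/2}}
$$
(the sign flip in the second inequality comes from the minus sign in~(\ref{DSH.sec.5.thm.1.eq.0.5})). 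Multiplying Corollary~\ref{DSH.sec.4.cor.1} by $\pi/(45\sqrt{n})$ on one hand and $\pi/(90\sqrt{n})$ on the other, and applying the triangle inequality in each case, produces
$$
\Biggl|r+\frac{3\sqrt{6\pi}\bigl(1-24t^2+48t^4\bigr)}{5n^{5/2}e^{6t^2}}\Biggr|<\frac{21\pi}{2n^{7/2}}+\frac{\pi(2267+3132t)}{45n^{7/2}}
$$
and the analogous inequality for $s$, with leading coefficient $3\sqrt{6\pi}/10$ (same sign as $1-24t^2+48t^4$) and error $3\pi/n^{7/2}+\pi(2267+3132t)/(90n^{7/2})$.

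The sign of $r$ (respectively $s$) therefore coincides with the sign of its leading term as soon as that leading term exceeds the corresponding error in absolute value. After multiplying each resulting inequality by $n^{7/2}e^{6t^2}/|1-24t^2+48t^4|$ and gathering terms, the threshold on $n$ for the $r$-estimate becomes
$$
n>\frac{e^{6t^2}}{|1-24t^2+48t^4|}\cdot\frac{\sqrt{\pi}\,(5479+6264t)}{54\sqrt{6}}\mbox{,}
$$
and the analogous threshold for $s$ is strictly smaller (it reads $\sqrt{\pi}(2537+3132t)/(27\sqrt{6})$ instead). The main step is then the numerical verification that
$$
\frac{\sqrt{\pi}\,(5479+6264t)}{54\sqrt{6}}<74+84t
$$
for every non-negative $t$, which follows by bounding the coefficients separately since $\sqrt{\pi/6}/54<0.0135$. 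Because Corollary~\ref{DSH.sec.4.cor.1} requires $d\geq 124$ and Lemma~\ref{DSH.sec.5.lem.2} requires $n\geq 118$, the hypothesis~(\ref{DSH.sec.5.lem.2.eq.1}) guarantees both conditions simultaneously.

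The main obstacle is essentially bookkeeping: one must carry through the two divisions by $2\sqrt{n}/\pi$, keep track of the sign flip coming from the minus sign in the definition of $s$, verify that the constant $74+84t$ uniformly dominates the sharper bound produced by the computation, and check that the $r$-threshold is indeed the binding one so that both signs are controlled simultaneously. Once these checks are in place, the conclusion follows from the fact that $1-24t^2+48t^4$ is positive on $[0,\gamma^-[\,\cup\,]\gamma^+,+\infty[$ and negative on $]\gamma^-,\gamma^+[$, while the leading terms of $r$ and $s$ carry opposite signs relative to this polynomial.
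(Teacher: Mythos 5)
Your proposal follows essentially the same route as the paper: combine Lemma~\ref{DSH.sec.5.lem.2} with Corollary~\ref{DSH.sec.4.cor.1} via the triangle inequality, isolate the leading term $\propto(1-24t^2+48t^4)/(n^{5/2}e^{6t^2})$, observe that the $r$-threshold $\sqrt{\pi}(5479+6264t)/(54\sqrt{6}\,|1-24t^2+48t^4|)$ is the binding one, and dominate it by $(74+84t)/|1-24t^2+48t^4|$. (The paper keeps everything scaled by $2\sqrt{n}/\pi$ rather than dividing first, but this is cosmetic.) One small slip: the constant $\sqrt{\pi/6}/54\approx0.013400$, so bounding it by $0.0135$ is \emph{not} enough, since $0.0135\cdot6264=84.564>84$; you need either a tighter decimal (e.g.\ $<0.01341$) or to compare $\sqrt{\pi}\cdot5479<54\sqrt{6}\cdot74$ and $\sqrt{\pi}\cdot6264<54\sqrt{6}\cdot84$ directly, both of which hold.
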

\begin{proof}
Assume that $n$ is at least $124$. According to Corollary \ref{DSH.sec.4.cor.1},
$$
\Biggl|\frac{1}{45}I_{n,4}(t)-\frac{27}{45}\sqrt{\frac{6}{\pi}}\frac{1-24t^2+48t^4}{n^2e^{6t^2}}\Biggr|<\frac{2267+3132t}{45n^3}\mbox{.}
$$

Hence, by Lemma \ref{DSH.sec.5.lem.2} and the triangle inequality,
\begin{multline}\label{DSH.sec.5.lem.4.eq.2}
\Biggl|\frac{2\sqrt{n}}{\pi}\!\int_0^{+\infty}\biggl(\frac{\sin\,u}{u}\biggr)^{\!n-2}\!\cos\Bigl(2\sqrt{d}tu\Bigr)f(u)du+\frac{54}{45}\sqrt{\frac{6}{\pi}}\frac{1-24t^2+48t^4}{n^2e^{6t^2}}\Biggr|\\
\hfill<\frac{5479+6264t}{45n^3}
\end{multline}
and
\begin{multline}\label{DSH.sec.5.lem.4.eq.3}
\Biggl|\frac{2\sqrt{n}}{\pi}\!\int_0^{+\infty}\biggl(\frac{\sin\,u}{u}\biggr)^{\!n-2}\!\cos\Bigl(2\sqrt{d}tu\Bigr)g(u)du+\frac{27}{45}\sqrt{\frac{6}{\pi}}\frac{1-24t^2+48t^4}{n^2e^{6t^2}}\Biggr|\\
\hfill<\frac{2537+3132t}{45n^3}
\end{multline}
for any non-negative number $t$. It follows from (\ref{DSH.sec.5.lem.4.eq.2}) that if
\begin{equation}\label{DSH.sec.5.lem.4.eq.4}
54\sqrt{\frac{6}{\pi}}\frac{\bigl|1-24t^2+48t^4\bigr|}{n^2e^{6t^2}}\geq\frac{5479+6264t}{n^3}
\end{equation}
then $1-24t^2+48t^4$ and the right-hand side of (\ref{DSH.sec.5.thm.1.eq.1}) are both non-zero and their signs are opposite. Likewise, it follows from (\ref{DSH.sec.5.lem.4.eq.3}) that if
\begin{equation}\label{DSH.sec.5.lem.4.eq.5}
27\sqrt{\frac{6}{\pi}}\frac{\bigl|1-24t^2+48t^4\bigr|}{n^2e^{6t^2}}\geq\frac{2537+3132t}{n^3}
\end{equation}
then $1-24t^2+48t^4$ and the right-hand side of (\ref{DSH.sec.5.thm.1.eq.0.5}) are both positive or both negative. Now recall that $1-24t^2+48t^4$ is positive precisely when $t<\gamma^-$ or $\gamma^+<t$ and negative precisely when $\gamma^-<t<\gamma^+$. Since
$$
n\geq{e^{6t^2}}\frac{74+84t}{\bigl|1-24t^2+48t^4\bigr|}
$$
implies both (\ref{DSH.sec.5.lem.4.eq.4}) and (\ref{DSH.sec.5.lem.4.eq.5}), this completes the proof.
\end{proof}

Using Lemma \ref{DSH.sec.5.lem.4} jointly with Theorem \ref{DSH.sec.5.thm.1} provides the following.

\begin{thm}\label{DSH.sec.5.thm.2}
Consider a non-negative number $t$ other than $\gamma^-$ and $\gamma^+$. Further consider an integer $n$ at most $d$ satisfying
\begin{equation}\label{DSH.sec.5.thm.2.eq.1}
n\geq\max\biggl\{124,e^{6t^2}\frac{74+84t}{\bigl|1-24t^2+48t^4\bigr|}\biggr\}\mbox{.}
\end{equation}

Let $H$ be a hyperplane of $\mathbb{R}^d$ whose distance to the center of the hypercube $[0,1]^d$ is equal to $t$. When $H$ is orthogonal to a diagonal of a $n$-dimensional face of $[0,1]^d$, the $(d-1)$-dimensional volume of $H\cap[0,1]^d$ is
\begin{enumerate}
\item[(i)] strictly locally maximal if $t<\gamma^-$ or $\gamma^+<t$ and $n$ is equal to $d$,
\item[(ii)] strictly locally minimal if $\gamma^-<t<\gamma^+$ and $n$ is equal to $d$, and
\item[(iii)] never locally extremal if $n$ is less than $d$.
\end{enumerate}
\end{thm}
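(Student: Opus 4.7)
The plan is to derive Theorem \ref{DSH.sec.5.thm.2} as a direct combination of the local extremality criterion of Theorem \ref{DSH.sec.5.thm.1} with the sign analysis carried out in Lemma \ref{DSH.sec.5.lem.4}. The hypothesis (\ref{DSH.sec.5.thm.2.eq.1}) placed on $n$ coincides exactly with the hypothesis (\ref{DSH.sec.5.lem.2.eq.1}) of Lemma \ref{DSH.sec.5.lem.4}, and since $n$ is at least $124$ and at most $d$, both Theorem \ref{DSH.sec.5.thm.1} and Lemma \ref{DSH.sec.5.lem.4} are applicable under the assumptions of the theorem.

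First I would read off from Lemma \ref{DSH.sec.5.lem.4} the signs of the two quantities $r$ and $s$ defined by (\ref{DSH.sec.5.thm.1.eq.1}) and (\ref{DSH.sec.5.thm.1.eq.0.5}). The quantity $r$ is the right-hand side of (\ref{DSH.sec.5.thm.1.eq.1}) and is therefore negative when $t<\gamma^-$ or $\gamma^+<t$ and positive when $\gamma^-<t<\gamma^+$. Similarly, $s$ is the right-hand side of (\ref{DSH.sec.5.thm.1.eq.0.5}) and is positive when $t<\gamma^-$ or $\gamma^+<t$ and negative when $\gamma^-<t<\gamma^+$. Observe in particular that $r$ and $s$ always have opposite signs, in both regimes for $t$.

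It then remains to distinguish according to whether $n$ is equal to $d$ or strictly less than $d$. If $n=d$, then case (i) of Theorem \ref{DSH.sec.5.thm.1} yields that the volume is strictly locally maximal precisely when $r$ is negative, that is, when $t<\gamma^-$ or $\gamma^+<t$, which proves part (i) of the theorem. Symmetrically, case (ii) of Theorem \ref{DSH.sec.5.thm.1} yields that this volume is strictly locally minimal precisely when $r$ is positive, that is, when $\gamma^-<t<\gamma^+$, which proves part (ii). If $n<d$, then the opposite signs of $r$ and $s$ established in the preceding paragraph allow to invoke case (iii) of Theorem \ref{DSH.sec.5.thm.1}, yielding that the volume is never locally extremal and completing part (iii).

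The proof is essentially bookkeeping, and no substantial obstacle is anticipated, as the real work has already been done in the preceding sections. The only subtle point to watch is the minus sign in the definition (\ref{DSH.sec.5.thm.1.eq.0.5}) of $s$, which must be carefully tracked when translating the sign statement of Lemma \ref{DSH.sec.5.lem.4} about the right-hand side of (\ref{DSH.sec.5.thm.1.eq.0.5}) into a sign statement about $s$ itself.
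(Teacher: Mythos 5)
Your proposal is correct and follows exactly the same approach the paper indicates: combining the sign determination from Lemma \ref{DSH.sec.5.lem.4} with the extremality criterion of Theorem \ref{DSH.sec.5.thm.1}, noting that $r$ and $s$ always have opposite signs under hypothesis (\ref{DSH.sec.5.thm.2.eq.1}). The paper itself only states this combination in one line without spelling out the case analysis; your bookkeeping of the three cases and your careful reading of the sign convention in (\ref{DSH.sec.5.thm.1.eq.0.5}) are accurate.
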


Theorems \ref{DSH.sec.0.thm.3} and \ref{DSH.sec.0.thm.3.5} stated in the introduction can now be proven.

\begin{proof}[Proof of Theorems \ref{DSH.sec.0.thm.3} and \ref{DSH.sec.0.thm.3.5}]
First observe that the right-hand side of (\ref{DSH.sec.5.thm.2.eq.1}) is a convex function of $t$ over the intervals $[0,\gamma^-[$, $]\gamma^-,\gamma^+[$, and $]\gamma^+,+\infty[$. As a consequence, it suffices to show that (\ref{DSH.sec.5.thm.2.eq.1}) holds when $n$ is large enough and $t$ is equal to $0$, to $\gamma^-\pm{K/n}$, to $\gamma^+\pm{K/n}$, or to $\sqrt{(\log n)/6}$ where $K$ is a constant. If $t$ is equal to $0$, then the right-hand side of (\ref{DSH.sec.5.thm.2.eq.1}) is a constant and the desired result is immediate. When $t$ is equal to $\sqrt{(\log n)/6}$, the result is also immediate because in that case, $e^{6t^2}$ is equal to $n$ and
$$
\lim_{n\rightarrow+\infty}\frac{74+84t}{\bigl|1-24t^2+48t^4\bigr|}=0\mbox{.}
$$

Now observe that, if $t$ is equal to $\gamma^-\pm{K/n}$ where $K$ is a constant, then
$$
\lim_{n\rightarrow+\infty}\frac{e^{6t^2}(74+84t)}{n\bigl|1-24t^2+48t^4\bigr|}=e^{6(\gamma^-)^2}\frac{74+84\gamma^-}{16\sqrt{6}K\gamma^-}
$$
and if $t$ is equal to $\gamma^+\pm{K/n}$, then the equality obtained from this one by substituting $\gamma^+$ for $\gamma^-$ holds. As the coefficients of $1/K$ in the right-hand side of these two equalities are less than $76$, this completes the proof.
\end{proof}

The remainder of the section is devoted to extending Theorem~\ref{DSH.sec.5.thm.2} down to the case when $n$ is equal to $4$ for certain ranges of $t$. As in Section \ref{DSH.sec.3}, this is done by using symbolic computation and by taking advantage of certain piecewise polynomial expressions for the right-hand sides of (\ref{DSH.sec.5.thm.1.eq.1}) and (\ref{DSH.sec.5.thm.1.eq.0.5}) that allow to determine their signs. More precisely, these signs can be derived as follows from Lemmas~3.2 and~4.1 in \cite{Pournin2024} and from Equations (4) and (9) in \cite{Pournin2025}.

\begin{lem}\label{DSH.sec.5.lem.5}
Consider a non-negative number $t$. For every integer $n$ at least $4$, the sign of the right-hand side of (\ref{DSH.sec.5.thm.1.eq.1}) is the same as the sign of
\begin{equation}\label{DSH.sec.5.lem.5.eq.2}
\sum_{i=0}^{\lfloor{z}\rfloor}(-1)^i{n\choose{i}}\biggl(\frac{i(n-i)}{n-1}-\biggl(\frac{n}{2}-i\biggr)\frac{(z-i)}{n-2}+\frac{2n(z-i)^2}{(n-1)(n-2)}\biggr)(z-i)^{n-3}
\end{equation}
and the sign of the right-hand side of (\ref{DSH.sec.5.thm.1.eq.0.5}) is the same as the sign of
\begin{equation}\label{DSH.sec.5.lem.5.eq.1}
\sum_{i=0}^{\lfloor{z}\rfloor}(-1)^i{n\choose{i}}\biggl(\frac{n}{12}-\biggl(\frac{n}{2}-i\biggr)\frac{(z-i)}{n-2}+\frac{n(z-i)^2}{(n-1)(n-2)}\biggr)(z-i)^{n-3}
\end{equation}
where
\begin{equation}\label{DSH.sec.5.lem.5.eq.3}
z=\frac{n}{2}-t\sqrt{n}\mbox{.}
\end{equation}
\end{lem}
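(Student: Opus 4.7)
The plan is to reduce the right-hand sides of (\ref{DSH.sec.5.thm.1.eq.1}) and (\ref{DSH.sec.5.thm.1.eq.0.5}) to the polynomial sums (\ref{DSH.sec.5.lem.5.eq.2}) and (\ref{DSH.sec.5.lem.5.eq.1}) by invoking the closed-form evaluations already available in \cite{Pournin2024,Pournin2025}, and then read off the sign from the resulting positive multiplicative constant.

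First I would substitute the definitions (\ref{DSH.sec.5.eq.1}) of $f$ and $g$ into the two integrands. This expands $(\sin u/u)^{n-2}f(u)$ as the elementary combination $2(\sin u/u)^n-(\sin u/u)^{n-1}\cos u-(\sin u/u)^{n-2}$, and similarly writes $(\sin u/u)^{n-1}g(u)$ as $(\sin u/u)^{n-1}\cos u+\tfrac{1}{3}(\sin u/u)^n u^2-(\sin u/u)^n$. Each term is of the form $(\sin u/u)^{m}\cos(2\sqrt{d}tu)\,u^{2j}$ (times a harmless $\cos u$ factor, which can be absorbed into the trigonometric factor via the product-to-sum identity) with $m\in\{n-2,n-1,n\}$ and $j\in\{0,1\}$.

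Next, each of these elementary integrals is evaluated in closed form through the same Fourier-analytic identity that underlies (\ref{DSH.sec.3.thm.2.eq.1}) (namely the one used to prove Lemmas~3.2 and~4.1 of \cite{Pournin2024} and Equations~(4) and~(9) of \cite{Pournin2025}), which produces a positive scalar multiple of a piecewise polynomial of the form
$$
\sum_{i=0}^{\lfloor z\rfloor}(-1)^i\binom{n}{i}Q_j(z-i)(z-i)^{n-3}\mbox{,}
$$
for an explicit degree at most two polynomial $Q_j$ depending on $m$ and $j$. Collecting the contributions associated with $(\sin u/u)^{n-2}f(u)$ and simplifying rational coefficients gives exactly (\ref{DSH.sec.5.lem.5.eq.2}), while the analogous computation for $(\sin u/u)^{n-1}g(u)$ gives (\ref{DSH.sec.5.lem.5.eq.1}). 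Since the overall multiplicative factor depends only on $n$ and is strictly positive, the signs of the two integrals and of the two polynomial expressions agree, which is the desired conclusion.

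The main obstacle will be the bookkeeping involved in collecting the elementary closed forms and matching the rational coefficients to those displayed in (\ref{DSH.sec.5.lem.5.eq.2}) and (\ref{DSH.sec.5.lem.5.eq.1}); in particular, the three contributions coming from the three terms of $f$ (respectively $g$) must telescope into a single sum indexed by $i$, which requires shifting indices and combining binomial coefficients. Since this identification has effectively been carried out in the references cited above, I would appeal to those results rather than redo the algebra, and the positivity of the scaling constant then yields the sign statement directly.
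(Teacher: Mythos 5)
Your proposal is correct and follows essentially the same route as the paper: the paper itself gives no self-contained proof of this lemma but states that the two sign identifications "can be derived from" Lemmas~3.2 and~4.1 of \cite{Pournin2024} and Equations~(4) and~(9) of \cite{Pournin2025}, which is precisely the appeal you make after unpacking $f$ and $g$ into elementary monomials in $\sin u/u$, $\cos u$, and $u^2$. Your intermediate decomposition $(\sin u/u)^{n-2}f(u)=2(\sin u/u)^n-\cos u\,(\sin u/u)^{n-1}-(\sin u/u)^{n-2}$ and the analogous one for $g$ are correct, and once each elementary integral is replaced by its closed-form piecewise-polynomial evaluation, the overall normalization is a strictly positive quantity (a combination of $2\sqrt{n}/\pi$ and factorial factors as in the display preceding~(\ref{DSH.sec.3.thm.2.eq.1})), so the sign transfer is legitimate; this is the same reasoning the paper relies on implicitly.
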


Observe that (\ref{DSH.sec.5.lem.5.eq.2}) and (\ref{DSH.sec.5.lem.5.eq.1}) are functions of $t$ by the change of variables (\ref{DSH.sec.5.lem.5.eq.3}) and that both of these functions of $t$ are polynomial in the intervals
\begin{equation}\label{DSH.sec.5.eq.3}
\biggl[\frac{\sqrt{n}}{2}-\frac{i+1}{\sqrt{n}},\frac{\sqrt{n}}{2}-\frac{i}{\sqrt{n}}\biggr]
\end{equation}
where $i$ is any integer satisfying $0\leq{i}\leq{n}$. It therefore follows from Lemma~\ref{DSH.sec.5.lem.5} that computing the signs of the right-hand sides of (\ref{DSH.sec.5.thm.1.eq.1}) and (\ref{DSH.sec.5.thm.1.eq.0.5}) amounts to compute the sign of these polynomial functions at the boundary of these intervals and to estimate their roots within these intervals, which can be done using symbolic computation. For any given dimension $n$, it follows from Theorem~\ref{DSH.sec.5.thm.2} that not all of the intervals of the form (\ref{DSH.sec.5.eq.3}) may need to be considered in the computation. Indeed, recall that the right-hand side of (\ref{DSH.sec.5.thm.2.eq.1}) is a convex function of $t$ in the intervals $[0,\gamma^-[$, $]\gamma^-,\gamma^+[$, and $]\gamma^+,+\infty[$. Hence, for any fixed $n$, if the bounds of (\ref{DSH.sec.5.eq.3}) are both contained in one of these intervals and the value of the right-hand side of (\ref{DSH.sec.5.thm.2.eq.1}) at both of these bounds is at most $n$, then Theorem~\ref{DSH.sec.5.thm.2} provides the result for every number $t$ in the interval and no computation is needed. The signs of (\ref{DSH.sec.5.lem.5.eq.2}) and (\ref{DSH.sec.5.lem.5.eq.1}) have been computed when $4\leq{n}\leq700$ using symbolic computation. Note that these signs have already been studied in \cite{Pournin2024,Pournin2025} when $4\leq{n}\leq300$. This computation results in Theorem \ref{DSH.sec.0.thm.4}, stated in the introduction.

Note that in the special case when $t$ is equal to $0$ and $H$ is orthogonal to a diagonal of a proper face of $[0,1]^d$ of dimension exactly $4$, Assertion (iii) in the statement of Theorem \ref{DSH.sec.0.thm.4} is obtained from~\cite[Theorem 1.4]{AmbrusGargyan2024b} rather than the described computation because (\ref{DSH.sec.5.lem.5.eq.1}) and therefore (\ref{DSH.sec.5.thm.1.eq.0.5}) vanish when $n$ is equal to $4$ and $t$ to $0$ \cite{Pournin2025}. In that case, Theorem \ref{DSH.sec.5.thm.1} cannot be applied.

The right-hand side of (\ref{DSH.sec.0.thm.4.eq.0}) in the statement of Theorem \ref{DSH.sec.0.thm.4} is about $0.14385$ and its expression can be recovered from \cite[Equation (46)]{Pournin2024}. It is noteworthy that the results stated by Theorem \ref{DSH.sec.0.thm.4} when $t$ gets close to the right-hand side of (\ref{DSH.sec.0.thm.4.eq.0}) in Assertions (i) and (iii) or to $1/4$ in Assertion (iii) cannot be extended beyond these values. Indeed, according to \cite[Proposition 6.1]{Pournin2024}, if $d$ is equal to $4$ then the volume of $H\cap[0,1]^4$ is strictly locally minimal when $H$ is orthogonal to a diagonal of $[0,1]^4$ and $t$ is greater than the right-hand side of (\ref{DSH.sec.0.thm.4.eq.0}) but less than $5/8$. Moreover, according to \cite[Proposition 3.1]{Pournin2025}, if $d$ is greater than $4$ then the $(d-1)$-dimensional volume of $H\cap[0,1]^d$ is strictly locally minimal when $H$ is orthogonal to a diagonal of a $4$-dimensional face of $[0,1]^d$ and $t$ is greater than the right-hand side of~(\ref{DSH.sec.0.thm.4.eq.0}) but less than $1/4$. However, these limiting values for $t$ are all related to the $4$-dimensional case and, considering only dimensions above $4$, the computations further allow to prove the following
\begin{thm}\label{DSH.sec.5.thm.3}
Consider a non-negative number $t$. Let $H$ be a hyperplane of $\mathbb{R}^d$ whose distance to the center of $[0,1]^d$ is equal to $t$. If $d$ is at least $5$, then the $(d-1)$-dimensional volume of $H\cap[0,1]^d$ is
\begin{enumerate}
\item[(i)] strictly locally maximal when $t$ satisfies $0\leq{t}\leq0.19436$ and $H$ is orthogonal to a diagonal of the hypercube $[0,1]^d$ and
\item[(ii)] not locally extremal (even weakly so) when either $0\leq{t}\leq0.19436$ or $0.23593\leq{t}\leq0.59495$ and $H$ is orthogonal to a diagonal of a proper face of dimension at least $5$ of the hypercube $[0,1]^d$.
\end{enumerate}
\end{thm}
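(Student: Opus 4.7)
The plan is to mirror the strategy already used to prove Theorem \ref{DSH.sec.0.thm.4}: combine Theorem \ref{DSH.sec.5.thm.2} for all dimensions above an explicit threshold with the symbolic computation enabled by Lemma \ref{DSH.sec.5.lem.5} for the remaining dimensions. The key geometric observation is that the two intervals $[0,0.19436]$ and $[0.23593,0.59495]$ are both bounded away from $\gamma^-$ and $\gamma^+$. Indeed, from (\ref{DSH.sec.0.eq.0}) one gets $\gamma^-\approx 0.2142$ and $\gamma^+\approx 0.6739$, so
$$
[0,0.19436]\subset[0,\gamma^-[\quad\mbox{and}\quad[0.23593,0.59495]\subset\,]\gamma^-,\gamma^+[\mbox{.}
$$
On each of these two intervals the sign of $1-24t^2+48t^4$ is constant, and these are precisely the intervals on which Theorem \ref{DSH.sec.5.thm.2} applies without the threshold exploding.

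The first step is to dispose of the large dimension case. The function
$$
t\mapsto e^{6t^2}\frac{74+84t}{\bigl|1-24t^2+48t^4\bigr|}
$$
is convex on each of $[0,\gamma^-[$ and $]\gamma^-,\gamma^+[$, a fact already exploited in Section \ref{DSH.sec.3} and in the proof of Theorems \ref{DSH.sec.0.thm.3} and \ref{DSH.sec.0.thm.3.5}. Hence on each of our closed sub-intervals its maximum is attained at an endpoint, and a direct evaluation at $t=0.19436$, $t=0.23593$, and $t=0.59495$ shows that this maximum stays below some explicit constant $N\leq 700$. Consequently, Theorem \ref{DSH.sec.5.thm.2} settles the statement for every $n$ with $N\leq n\leq d$: its case (iii) yields assertion (ii) of the target theorem when $n<d$, while its case (i) yields assertion (i) when $n=d$ (since $t<\gamma^-$ there).

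The remaining work is the range $5\leq n\leq N-1$, which I would handle by invoking Lemma \ref{DSH.sec.5.lem.5}. This lemma identifies the signs of the integrals (\ref{DSH.sec.5.thm.1.eq.1}) and (\ref{DSH.sec.5.thm.1.eq.0.5}) with those of the explicit piecewise polynomial expressions (\ref{DSH.sec.5.lem.5.eq.2}) and (\ref{DSH.sec.5.lem.5.eq.1}), the pieces being given by the intervals (\ref{DSH.sec.5.eq.3}). For each $n$ in this range and each such interval intersecting $[0,0.19436]$ or $[0.23593,0.59495]$, the polynomial at hand has low degree in $t$, and its sign on the intersection can be certified by evaluating at an endpoint and isolating its roots to sufficient precision by exact symbolic computation. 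Combined with Theorem \ref{DSH.sec.5.thm.1}, this yields assertion (i) (when $n=d$, via $r<0$) and assertion (ii) (when $n<d$, via $r$ and $s$ of opposite signs on both sub-intervals). This is precisely the computation already carried out in order to establish Theorem \ref{DSH.sec.0.thm.4}; restricting to $d\geq 5$ and to the narrower $t$-ranges of the statement only simplifies the bookkeeping.

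The main obstacle is the management of this symbolic computation: sign conditions have to be verified across a few hundred dimensions and, for each, across up to roughly $n/2$ polynomial pieces. This is routine but tedious. A secondary subtlety that must be addressed in the write-up is that $n=4$ has to be excluded from assertion (ii): at $t=0$ the expression (\ref{DSH.sec.5.lem.5.eq.1}), and hence the integral (\ref{DSH.sec.5.thm.1.eq.0.5}), vanishes when $n=4$, so Theorem \ref{DSH.sec.5.thm.1} cannot be applied as stated. This is the very reason that the statement of Theorem \ref{DSH.sec.5.thm.3} requires $d\geq 5$ in (ii), in contrast to Theorem \ref{DSH.sec.0.thm.4}.
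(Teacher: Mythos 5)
Your proposal matches the paper's approach: Theorem~\ref{DSH.sec.5.thm.3} is obtained from the same machinery as Theorem~\ref{DSH.sec.0.thm.4}, namely Theorem~\ref{DSH.sec.5.thm.2} above an explicit dimension threshold (using the convexity of the threshold function on $[0,\gamma^-[$ and $]\gamma^-,\gamma^+[$) together with the symbolic sign determination of (\ref{DSH.sec.5.lem.5.eq.2}) and (\ref{DSH.sec.5.lem.5.eq.1}) for dimensions $5\leq n\leq 700$, and the paper gives no separate argument beyond this. Your containment checks $[0,0.19436]\subset[0,\gamma^-[$ and $[0.23593,0.59495]\subset\,]\gamma^-,\gamma^+[$ are correct, and the reduction to $r<0$ for (i) and to $r,s$ of opposite signs for (ii) via Theorem~\ref{DSH.sec.5.thm.1} is exactly right.

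One inaccuracy deserves correcting: you attribute the restriction to $d\geq 5$ (and to faces of dimension at least $5$) to the vanishing of (\ref{DSH.sec.5.lem.5.eq.1}) at $n=4$, $t=0$. That vanishing is a genuine obstruction at a single point, and the paper does note it, but it is \emph{not} the reason for the dimension restriction here --- indeed Theorem~\ref{DSH.sec.0.thm.4} does treat $n=4$ and $t=0$, simply by invoking~\cite[Theorem 1.4]{AmbrusGargyan2024b} at that one point. The actual reason Theorem~\ref{DSH.sec.5.thm.3} requires $n\geq 5$ is that the $4$-dimensional case is the bottleneck on the $t$-ranges: by~\cite[Proposition 6.1]{Pournin2024} and~\cite[Proposition 3.1]{Pournin2025}, the $4$-dimensional full cube and $4$-dimensional proper faces become strictly locally minimal (hence not non-extremal) once $t$ exceeds the right-hand side of~(\ref{DSH.sec.0.thm.4.eq.0}), respectively $1/4$. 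Excluding $n=4$ is what allows the interval in assertion~(i) to be extended from roughly $0.14385$ up to $0.19436$ and the interval in assertion~(ii) to be taken as $[0.23593,0.59495]$ rather than $]1/4,1/2[\,\cup\,]1/2,0.59495]$. This does not affect the validity of your argument, but it misrepresents why the ranges in Theorem~\ref{DSH.sec.5.thm.3} differ from those in Theorem~\ref{DSH.sec.0.thm.4}.
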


Note that the ranges for $t$ stated by Theorem \ref{DSH.sec.5.thm.3} and well as the ranges in Theorem \ref{DSH.sec.0.thm.4} near their bounds other than $1/4$, $1/2$, or the right-hand side of~(\ref{DSH.sec.0.thm.4.eq.0}) can be enlarged by extending the computation to $n$ greater than $700$.

\medskip

\noindent\textbf{Acknowledgement.} The author is grateful to Alexandros Eskenazis for bringing the article of Bartha, Fodor, and Gonz{\'a}lez Merino \cite{BarthaFodorGonzalezMerino2021} to his attention, to Hermann K{\"o}nig for enlightening discussions about hypercube sections, and to Antoine Deza for comments on an early version of this manuscript.

\bibliography{DeepSections}
\bibliographystyle{ijmart}

\end{document}